\def\OO{{\mathcal O}}
\def\IN{{\Bbb{N}}}
\def\IR{{\Bbb{R}}}
\def\IS{{\Bbb{S}}}
\def\TO{\Longrightarrow}
\def\In{\subseteq}
\def\into{\hookrightarrow}
\def\prefix{\sqsubseteq}
\def\mto{\rightrightarrows}
\def\K{{\mathsf{{K}}}}
\def\T{{\mathsf{{T}}}}
\def\U{{\mathsf{{U}}}}
\def\w{{\mathsf{{w}}}}
\def\C{{\mathsf{{C}}}}
\def\LPO{\mathsf{LPO}}
\def\LLPO{\mathsf{LLPO}}
\def\C{\mathsf{C}}
\def\w{\mathsf{w}}
\def\leqm{\mathop{\leq_{\mathrm{m}}}}
\def\leqT{\mathop{\leq_{\mathrm{T}}}}
\def\equivT{\mathop{\equiv_{\mathrm{T}}}}
\def\leqW{\mathop{\leq_{\mathrm{W}}}}
\def\equivW{\mathop{\equiv_{\mathrm{W}}}}
\def\leqSW{\mathop{\leq_{\mathrm{sW}}}}
\def\equivSW{\mathop{\equiv_{\mathrm{sW}}}}
\def\lW{\mathop{<_{\mathrm{W}}}}
\def\leqm{\mathop{\leq_{\mathrm{m}}}}
\def\leqT{\mathop{\leq_{\mathrm{T}}}}
\def\equivT{\mathop{\equiv_{\mathrm{T}}}}
\def\leqW{\mathop{\leq_{\mathrm{W}}}}
\def\equivW{\mathop{\equiv_{\mathrm{W}}}}
\def\leqSW{\mathop{\leq_{\mathrm{sW}}}}
\def\equivSW{\mathop{\equiv_{\mathrm{sW}}}}
\def\lW{\mathop{<_{\mathrm{W}}}}
\def\id{{\mathrm{id}}}
\def\dom{{\mathrm{dom}}}
\def\range{{\mathrm{range}}}
\def\graph{{\mathrm{graph}}}
\newcommand{\PO}[1]{{{\mathbf\Pi}^0_{#1}}}
\def\NON{\mathsf{NON}}
\def\NRNG{\mathsf{NRNG}}
\def\DIS{\mathsf{DIS}}
\def\AC{\mathsf{AC}}
\def\AD{\mathsf{AD}}
\def\ZFC{\mathsf{ZFC}}
\def\ZF{\mathsf{ZF}}
\def\DC{\mathsf{DC}}
\def\BP{\mathsf{BP}}
\def\r{\mathrm{r}}
\newcommand{\douwidehat}[2]{%
  \sbox0{$\m@th#1\widehat{\hphantom{#2}}$}%
  \sbox2{$\m@th#1x$}
  \sbox4{$\m@th#1#2$}
  \dimen0=\ht0
  \advance\dimen0 -.8\ht2
  \dimen2=\dp4
  \rlap{%
    \raisebox{\dimexpr\dimen0-\dimen2}{%
      \scalebox{1}[-1]{\box0}%
    }%
  }%
  {#2}%
}
\CatchFileEdef\user{"|kpsewhich -var-value USERNAME"}{\endlinechar=-1 }
\title{The Discontinuity Problem}
\author{Vasco Brattka}
\address{Faculty of Computer Science, Universit\"at der Bundeswehr M\"unchen, Germany and
Department of Mathematics and Applied Mathematics, University of Cape Town, South Africa}
\email{Vasco.Brattka@cca-net.de}
\begin{document}

\theoremstyle{definition}
\newtheorem{theorem}{Theorem}
\newtheorem{definition}[theorem]{Definition}
\newtheorem{problem}[theorem]{Problem}
\newtheorem{assumption}[theorem]{Assumption}
\newtheorem{corollary}[theorem]{Corollary}
\newtheorem{proposition}[theorem]{Proposition}
\newtheorem{lemma}[theorem]{Lemma}
\newtheorem{observation}[theorem]{Observation}
\newtheorem{fact}[theorem]{Fact}
\newtheorem{question}[theorem]{Question}
\newtheorem{example}[theorem]{Example}
\newtheorem{convention}[theorem]{Convention}
\newtheorem{conjecture}[theorem]{Conjecture}

\keywords{}

\begin{abstract}
Matthias Schr\"oder has asked the question whether there is a weakest discontinuous problem in the 
continuous version of the Weihrauch lattice. Such a problem can be considered as the weakest unsolvable problem.
We introduce the {\em discontinuity problem}, and we show that it 
is reducible exactly to the {\em effectively  discontinuous problems}, defined in a suitable way.
However, in which sense this answers Schr\"oder's question sensitively depends on the axiomatic framework that is chosen, and it is a positive answer
if we work in Zermelo-Fraenkel set theory with dependent choice and the axiom of determinacy $\AD$.
On the other hand, using the full axiom of choice, one can construct problems which are discontinuous, but not
effectively so. 
Hence, the exact situation at the bottom of the Weihrauch lattice sensitively depends on the axiomatic setting that we choose.
We prove our result using a variant of Wadge games for mathematical problems. While the existence of a winning strategy for
player II characterizes continuity of the problem (as already shown by Nobrega and Pauly), the existence of a winning strategy for player I characterizes effective discontinuity of the problem. 
By {\em Weihrauch determinacy} we understand the condition that every problem is either continuous or
effectively discontinuous. This notion of determinacy is a fairly strong notion, as it is not only implied by the axiom of determinacy $\AD$, but
it also implies Wadge determinacy. We close with a brief discussion of generalized notions of productivity.
\end{abstract}

\maketitle

\setcounter{tocdepth}{1}
\tableofcontents

\section{Introduction}

The Weihrauch lattice has been used as a computability theoretic framework to analyze the uniform computational
content of mathematical problems from many different areas of mathematics, and it can also be seen as a uniform variant
of reverse mathematics (a recent survey on Weihrauch complexity can be found in \cite{BGP21}). 
The notion of a mathematical problem has a very general definition in this approach.

\begin{definition}[Problems]
\label{def:problem}
A {\em problem} is a multi-valued function $f:\In X\mto Y$ on represented spaces
$X,Y$ that has a realizer.
\end{definition}

We recall that by a {\em realizer} $F:\In\IN^\IN\to\IN^\IN$ of $f$, we mean a function $F$ that satisfies
$\delta_YF(p)\in f\delta_X(p)$ for all $p\in\dom(f\delta_X)$, where $\delta_X:\In\IN^\IN\to X$ and ${\delta_Y:\In\IN^\IN\to Y}$
are the representations of $X$ and $Y$, respectively (i.e., partial surjective maps onto $X$ and $Y$, respectively).

We note that we have added the condition here that a problem has to have a realizer, since
we want to prove all our results over the base theory of 
Zermelo-Fraenkel set theory ($\ZF$) together with the axiom of dependent choice ($\DC$), if not otherwise mentioned.
These axioms suffice to prove most results in Weihrauch complexity. Typically, the full axiom of choice ($\AC$)
is freely used in Weihrauch complexity, often just to guarantee the existence of some realizer.
By $\langle p,q\rangle$ we denote the usual
pairing function on $\IN^\IN$, defined by $\langle p,q\rangle(2n)=p(n)$, $\langle p,q\rangle(2n+1)=q(n)$
for all $p,q\in\IN^\IN,n\in\IN$.
Weihrauch reducibility can now be defined as follows. 

\begin{definition}[Weihrauch reducibility]
Let $f:\In X\mto Y$ and $g:\In W\mto Z$ be problems. Then $f$ is called {\em Weihrauch reducible}
to $g$, in symbols $f\leqW g$, if there are computable $H,K:\In\IN^\IN\to\IN^\IN$ such that
$H\langle \id,GK\rangle$ is a realizer of $f$ whenever $G$ is a realizer of $g$.
Analogously, one says that $f$ is {\em strongly Weihrauch reducible} to $g$, in symbols $f\leqSW g$, if the expression $H\langle \id,GK\rangle$
can be replaced by $HGK$. Both versions of the reducibility have continuous counterparts,
where one requires $H,K$ only to be continuous and these reducibilities are denoted by $\leq_\mathrm{W}^*$ and $\leq_\mathrm{sW}^*$,
respectively.
\end{definition}

The continuous version of Weihrauch reducibility has always been studied alongside the computable
version, and all four reducibilities induce a lattice structure (see \cite{BGP21} for references). 
Normally, the {\em Weihrauch lattice} refers
to the lattice induced by $\leqW$, but here we will freely use this term also for the lattice structure induced by $\leq_\mathrm{W}^*$. If we want to be more precise, we will call the latter the {\em continuous Weihrauch lattice}.\footnote{This should not be misunderstood such that the structure is continuous as a lattice, but it just indicates that we
refer to the lattice structure induced by the continuous version of the reducibility.}
Even though this lattice has been studied for about 30 years, very little is known about the structure of the lattice closer
towards the bottom. Indeed Matthias Schr\"oder has asked the following question\footnote{The original question is phrased slightly differently, but we interpret it in the intended way.}~\cite[Question~5.9]{BDMP19}.

\begin{question}[Matthias Schr\"oder 2018]
Does there exist a discontinuous problem $f$ such that $f\leq_{\rm W}^*g$ holds for any other 
discontinuous problem $g$?
\end{question}

Here a problem is called {\em continuous} if it has a continuous realizer and {\em discontinuous} otherwise.
It is clear that the degree $0$ of the nowhere defined problems $f$ is the bottom degree of the (continuous)
Weihrauch lattice. The second lowest degree, sometimes called $1$, is the degree of the identity $\id:\IN^\IN\to\IN^\IN$
that includes all somewhere defined continuous problems.\footnote{For the computable version and the strong continuous version of Weihrauch reducibility the continuous problems with non-empty domain do not form a single equivalence class, however together with $0$ they still form the cone below $\id$.}
Essentially Schr\"oder's question is whether there is a third degree $2$ such that the continuous Weihrauch lattice starts 
with the linear ordered structure $0<1<2$. We will prove that under certain conditions this is indeed so, namely there is such a third degree given by the problem 
\[\DIS:\IN^\IN\mto\IN^\IN,p\mapsto\{q\in\IN^\IN:\U(p)\not=q\}\]
that we call the {\em discontinuity problem} (see Definition~\ref{def:dis}). 
Here $\U:\In\IN^\IN\to\IN^\IN$ is a fixed universal computable function.

\begin{figure}
\begin{tikzpicture}
\fill[red!50,path fading=north] (0,0) -- (-3,2) -- (3,2) -- cycle;
\fill[green!50,path fading=south] (0,-1) -- (-3,-3) -- (3,-3) -- cycle;
\node[fill,circle,scale=0.3,label=right:$\DIS$] (DIS) at (0,0) {};
\node[fill,circle,scale=0.3,label=right:$\id$] (id) at (0,-1) {};
\node (1) at (0,1.5) {effectively discontinuous};
\node (2) at (0,-2.5) {continuous};
\node[fill,circle,scale=0.3,label=right:$?$]  (3) at (2,-0.5) {};
\draw [-,thick] (DIS) -- (id);
\end{tikzpicture}
\caption{Problems $f:\In X\mto Y$ with respect to $\leq_{\rm W}^*$ and $\leq_{\rm sW}^*$.} 
\label{fig:DIS}
\end{figure}

In section~\ref{sec:effective-discontinuity} we prove that the discontinuity problem $\DIS$ 
characterizes effectively discontinuous problem in the following sense (see Theorem~\ref{thm:effective-discontinuity}).

\begin{theorem}[Continuity and effective discontinuity]
\label{thm:dichotomy}
Let $f:\In X\mto Y$ be a problem. Then we obtain
\begin{enumerate}
\item $f\leq_\mathrm{W}^*\id\iff f$ is continuous,
\item $\DIS\leq_\mathrm{W}^*f\iff f$ is effectively discontinuous.
\end{enumerate}
\end{theorem}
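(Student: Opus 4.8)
\emph{The plan.}
Part~(1) is a direct unfolding of the definitions. With $\delta_{\IN^\IN}=\id$, the only realizer of $\id\colon\IN^\IN\to\IN^\IN$ is $\id$ itself, so $f\leq_\mathrm{W}^*\id$ asserts precisely that $H\langle\id,K\rangle$ is a realizer of $f$ for some continuous $H,K$, i.e.\ that $f$ has a continuous realizer; conversely, if $F$ is a continuous realizer of $f$ one takes $K:=F$ and $H:=\pr_2$ (and if $\dom(f)=\emptyset$ both sides hold vacuously). For part~(2) I treat the two implications separately, using the game $\mathfrak{G}(f)$ for problems from Section~\ref{sec:effective-discontinuity} --- player~I builds a name of some $x\in\dom(f)$, player~II builds a candidate output name and may pass, and player~II wins iff her name names an element of $f(x)$ --- together with the facts, established there, that $f$ is continuous iff player~II has a winning strategy (Nobrega--Pauly) and that $f$ is effectively discontinuous iff player~I has a winning strategy.

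\emph{Direction ``$\Leftarrow$'' of (2).}
Fix a winning strategy $\sigma$ for player~I. Given an input $p$ for $\DIS$, let $K(p)$ be the input name produced by $\sigma$ when player~II is forced to play the successive output symbols of $\U(p)$, inserting a pass whenever no further symbol of $\U(p)$ has yet been enumerated. Passes keep every play infinite, so $\sigma$ always returns a full name $K(p)$ of a point of $\dom(f)$, and $p\mapsto K(p)$ is computable, because each symbol of $K(p)$ depends only on finitely many stages of the computation of $\U(p)$, hence on a finite prefix of $p$. Put $H:=\pr_2$. If $p\in\dom(\U)$, then player~II's play exhibits the name $\U(p)$, which by the winning property of $\sigma$ is \emph{not} a name of an element of $f$ on $\delta_X K(p)$; but $GK(p)$ \emph{is} such a name for every realizer $G$ of $f$, so $GK(p)\neq\U(p)$. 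If $p\notin\dom(\U)$ then $\DIS(p)=\IN^\IN$ and any output is admissible. Hence $H\langle\id,GK\rangle=GK$ realizes $\DIS$ for every realizer $G$ of $f$, so $\DIS\leq_\mathrm{sW}^*f$, a fortiori $\DIS\leq_\mathrm{W}^*f$.

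\emph{Direction ``$\Rightarrow$'' of (2).}
Let continuous $H,K$ witness $\DIS\leq_\mathrm{W}^*f$; since the reduction must succeed for \emph{every} (possibly discontinuous) realizer $G$ of $f$, the map $K$ sends $\IN^\IN$ into $\dom(f\delta_X)$. I produce from $H,K$ a procedure defeating every continuous candidate realizer of $f$, which witnesses effective discontinuity. Such a candidate is presented by an associate $r$ with $\U\langle r,\cdot\rangle=\varphi_r$; form the continuous map $R_r:=H\langle\id,\varphi_r K\rangle$ and compute, uniformly from $r$ and associates of $H,K$, an index $e(r)$ with $\U\langle e(r),\cdot\rangle=R_r$. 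Let $p^\ast$ be the self-referential point with $p^\ast=\langle e(r),p^\ast\rangle$, obtained uniformly from $r$, and put $z:=K(p^\ast)\in\dom(f\delta_X)$. I claim $\varphi_r$ fails to realize $f$ at $z$. Otherwise $\varphi_r(z)$ is a correct output name, so modifying any realizer of $f$ to take the value $\varphi_r(z)$ at $z$ yields a realizer $G$ with $GK(p^\ast)=\varphi_r(z)$; then $H\langle p^\ast,GK(p^\ast)\rangle\neq\U(p^\ast)$ because $H\langle\id,GK\rangle$ realizes $\DIS$, while simultaneously $H\langle p^\ast,GK(p^\ast)\rangle=H\langle p^\ast,\varphi_r K(p^\ast)\rangle=R_r(p^\ast)=\U\langle e(r),p^\ast\rangle=\U(p^\ast)$, a contradiction. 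Hence $\varphi_r$ errs at $z=K(p^\ast)$, which is computed uniformly from $r$; so $f$ is effectively discontinuous.

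\emph{Main obstacle.}
The real work is in ``$\Rightarrow$'': a continuous reduction $\DIS\leq_\mathrm{W}^*f$ only supplies an \emph{arbitrary} realizer $G$ of $f$, queried \emph{once}, at a single input under our control, whereas effective discontinuity concerns \emph{continuous} candidate realizers; the bridge is the self-referential query $p^\ast=\langle e(r),p^\ast\rangle$ fed to the universal function $\U$, which plays here the role of the recursion theorem (it is exactly this fixed point that already makes $\DIS$ itself effectively discontinuous). In direction ``$\Leftarrow$'' the only delicate point is to see that inserting passes keeps $K$ total and continuous without affecting the winning condition of $\sigma$; beyond that, (2) is bookkeeping between names in $\IN^\IN$ and points of the represented spaces $X,Y$.
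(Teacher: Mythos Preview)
Your argument is correct. Part~(1) and the ``$\Rightarrow$'' direction of part~(2) match the paper's proof: the paper invokes its parameterized recursion theorem (Corollary~\ref{cor:recursion-theorem}) to obtain a continuous $R$ with $\U R(q)=H\langle R(q),\Phi_q KR(q)\rangle$ and takes $D:=KR$ as the discontinuity function, which is precisely what your explicit fixed point $p^\ast=\langle e(r),p^\ast\rangle$ accomplishes by hand.

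The ``$\Leftarrow$'' direction of part~(2) is where you diverge. You route through the game characterization, feeding player~II the digits of $\U(p)$ against player~I's winning strategy $\sigma$ to extract the reduction input $K(p)$. The paper instead works directly from the discontinuity function $D$: the smn-theorem supplies a continuous $R$ with $\Phi_{R(p)}(q)=\U(p)$ for all $q$, and then for any realizer $F$ of $f$ one has $FDR(p)\in f^{\mathrm r}DR(p)$ while $\Phi_{R(p)}DR(p)=\U(p)\notin f^{\mathrm r}DR(p)$, so $FDR(p)\neq\U(p)$ and $K:=DR$ witnesses $\DIS\leq_{\mathrm{sW}}^* f$. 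This is shorter and avoids the game layer entirely. Two small points on your version: in the paper's logical order the game characterization (Theorem~\ref{thm:Wadge-game}) is proved \emph{after} the present theorem and even cites it for the reduction to Baire space, so your forward reference, while not substantively circular, is structurally awkward; and your $K$ is \emph{continuous} rather than ``computable'', since a winning strategy $\sigma$ is an arbitrary word function---the finite-prefix dependency you describe yields continuity, not computability.
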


The diagram in Figure~\ref{fig:DIS} illustrates the situation.
Here {\em effective discontinuity} of a problem $f$ is defined in a very natural way (see Definition~\ref{def:computable-discontinuity}) using a continuous {\em discontinuity function} $D:\IN^\IN\to\IN^\IN$ that has to produce an input to every given potential continuous realizer of $f$ on which this realizer fails. 

In some sense the notion of effective discontinuity is reminiscent of the notion of {\em productivity}
from classical computability theory, which can be regarded as the property of being ``effectively not c.e.''.
Indeed a well-known theorem of Myhill~\cite{Myh55} gives us the analog of Theorem~\ref{thm:dichotomy} for 
subsets $A\In\IN$ and many-one reducibility $\leqm$ \cite[Theorem~2.4.6]{Soa16}, \cite[Theorem~2.6.6]{Wei87}.
Here the {\em halting problem} $\K\In\IN$ plays the counterpart of the identity and $\IN\setminus\K$ the counterpart of $\DIS$.

\begin{theorem}[Myhill 1955]
\label{thm:Myhill}
Let $A\In\IN$. Then we obtain
\begin{enumerate}
\item $A\leqm \K\iff A$ is c.e.,
\item $\IN\setminus \K\leqm A\iff A$ is productive.
\end{enumerate}
\end{theorem}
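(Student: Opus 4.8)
\textbf{Proof plan for Theorem~\ref{thm:Myhill} (Myhill).}

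The plan is to recall the two halves as standard facts and indicate the one nontrivial construction. For part~(1), if $A\leqm\K$ via a computable function $\varphi$, then $A=\varphi^{-1}(\K)$ is c.e.\ because $\K$ is c.e.\ and the preimage of a c.e.\ set under a total computable function is c.e.; conversely, if $A$ is c.e., then by the $s$-$m$-$n$ theorem there is a computable function $h$ with $W_{h(n)}=\IN$ if $n\in A$ and $W_{h(n)}=\emptyset$ if $n\notin A$, so $n\in A\iff h(n)\in\K$, i.e.\ $A\leqm\K$ via $h$. This is routine and I would dispatch it in two sentences.

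For part~(2), the easy direction is that $\IN\setminus\K\leqm A$ implies $A$ productive: if $f$ is a computable reduction witnessing $\IN\setminus\K\leqm A$, then for every index $e$ with $W_e\In A$ we have $f(e)\in A\iff e\in\IN\setminus\K\iff e\notin W_e$; since $W_e\In A$ forces $f(e)\in A$ whenever $e\in W_e$ would be contradictory, one checks that $f(e)\in A\sm W_e$, so $f$ is a productive function for $A$. The substantive direction is the converse: given a productive function $p$ for $A$ (so $W_e\In A\Rightarrow p(e)\in A\sm W_e$), produce a computable $g$ with $n\in\IN\setminus\K\iff g(n)\in A$. The key trick is the recursion theorem combined with the $s$-$m$-$n$ theorem: for each $n$, using a parameter one defines a computable function that, given an index $x$ for itself, enumerates into $W_x$ exactly $\{p(x)\}$ if $n\in\K$ (waiting for the halting computation) and enumerates nothing if $n\notin\K$. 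By the recursion theorem (with parameters) there is a computable $x=e(n)$ such that $W_{e(n)}=\{p(e(n))\}$ if $n\in\K$ and $W_{e(n)}=\emptyset$ if $n\notin\K$. One then sets $g(n):=p(e(n))$. If $n\notin\K$ then $W_{e(n)}=\emptyset\In A$, so by productivity $p(e(n))\in A$, i.e.\ $g(n)\in A$. If $n\in\K$ then $W_{e(n)}=\{p(e(n))\}$; were $g(n)=p(e(n))\in A$, we would have $W_{e(n)}\In A$ and hence by productivity $p(e(n))\in A\sm W_{e(n)}$, contradicting $p(e(n))\in W_{e(n)}$; so $g(n)\notin A$. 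Thus $n\in\IN\sm\K\iff g(n)\in A$, giving $\IN\sm\K\leqm A$.

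The main obstacle is purely expository: packaging the simultaneous use of the $s$-$m$-$n$ theorem and the recursion theorem with parameters so that the fixed index $e(n)$ depends computably on $n$ while also referring to itself inside the enumeration. Since this is a classical result, I would simply cite \cite[Theorem~2.4.6]{Soa16} or \cite[Theorem~2.6.6]{Wei87} and include only the short reduction arguments above, emphasizing the parallel with Theorem~\ref{thm:dichotomy}: $\K$ plays the role of $\id$ (the c.e.\ sets are the analogue of the continuous problems) and $\IN\sm\K$ plays the role of $\DIS$ (productivity being the analogue of effective discontinuity).
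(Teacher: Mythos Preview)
The paper does not prove this theorem at all: it is stated as a classical result with references to \cite[Theorem~2.4.6]{Soa16} and \cite[Theorem~2.6.6]{Wei87}, and serves only as motivation and analogy for Theorem~\ref{thm:dichotomy}. Your ultimate proposal, to cite exactly these sources, is therefore precisely what the paper does.

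That said, your sketch of the ``easy'' direction of (2) contains a genuine error. From $\IN\sm\K\leqm A$ via $f$ you conclude that $f$ itself is a productive function for $A$, but this fails in general. Whenever there exists $e$ with $e\in W_e\In A$ (which can happen as soon as $A\cap\K\neq\emptyset$), we have $e\in\K$, hence $f(e)\notin A$, so $f(e)\notin A\sm W_e$ and the productivity condition is violated. The correct argument inserts an $s$-$m$-$n$ step: given $e$, compute an index $h(e)$ with $W_{h(e)}=f^{-1}(W_e)$; then $W_e\In A$ implies $W_{h(e)}\In\IN\sm\K$, and since the identity is a productive function for $\IN\sm\K$ one obtains $h(e)\in(\IN\sm\K)\sm W_{h(e)}$, whence $f(h(e))\in A\sm W_e$. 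Thus the productive function is $f\circ h$, not $f$. Your recursion-theorem argument for the converse direction is the standard one and is correct, modulo the usual preliminary step of arranging the productive function to be total.
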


The diagram in Figure~\ref{fig:productive} illustrates the situation.
In fact, Theorem~\ref{thm:dichotomy} is proved with the help of the recursion theorem, 
in a similar way as Theorem~\ref{thm:Myhill}. However, somewhat surprisingly, the recursion theorem is used
for the implication ``$\Longleftarrow$'' in the proof of Theorem~\ref{thm:Myhill}~(2) and for the direction ``$\TO$'' in the proof of Theorem~\ref{thm:dichotomy}~(2).

\begin{figure}
\begin{tikzpicture}
\fill[red!50,path fading=north] (0,0) -- (-3,2) -- (3,2) -- cycle;
\fill[green!50,path fading=south] (-2,0) -- (-5,-3) -- (1,-3) -- cycle;
\node[fill,circle,scale=0.3,label=right:$\IN\setminus\K$] (NK) at (0,0) {};
\node[fill,circle,scale=0.3,label=right:$\K$] (K) at (-2,0) {};
\node (1) at (0,1.5) {productive};
\node (2) at (-2,-2.5) {computably enumerable};
\node[fill,circle,scale=0.3,label=right:immune set]  (3) at (2,-1.5) {};
\end{tikzpicture}
\caption{Sets $A\In\IN$ with respect to many-one reducibility $\leq_{\rm m}$.} 
\label{fig:productive}
\end{figure}
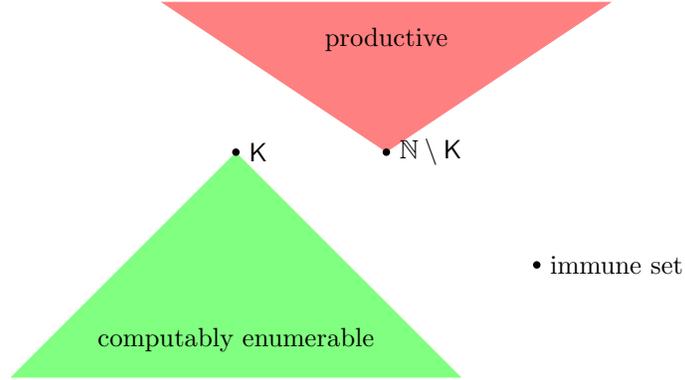

It is well known that Theorem~\ref{thm:Myhill} does not express a dichotomy, i.e., there are sets which are
neither c.e.\ nor productive.
A set $A\In\IN$ is called {\em immune} if it is infinite, but does not include an infinite c.e.\ set \cite{Soa16,Wei87}.
By a classical construction of Post immune sets exist \cite{Pos44}, \cite[Theorem~5.2.3]{Soa16}, \cite[Theorem~2.3.7]{Wei87} and they are clearly neither c.e.\ nor productive, since any productive set contains an infinite c.e.\ subset (see Figure~\ref{fig:productive}).

Now a key question for us is whether there is a counterpart of immune sets in our situation, i.e., whether there are problems $f$
that are discontinuous, but not effectively so (see the question mark in Figure~\ref{fig:DIS}).
In fact, Post's construction of a simple set has some similarity to the construction of so-called {\em Bernstein sets} that can actually be used to construct discontinuous problems that are not effectively discontinuous (see Corollary~\ref{cor:Bernstein}).
This leads to the following counterexample (see Theorem~\ref{thm:AC}).

\begin{theorem} 
Assuming $\ZFC$ there are problems $f:\IN^\IN\mto\IN^\IN$ that are continuous, but not effectively so.
\end{theorem}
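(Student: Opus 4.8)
The plan is to read ``effectively so'' as the evident computable refinement of the property in question: just as Theorem~\ref{thm:dichotomy}(1) identifies continuity with reducibility to the identity in the continuous Weihrauch lattice, $f\leq_\mathrm{W}^*\id$, I will call $f$ \emph{effectively continuous} when it reduces to the identity in the computable lattice, $f\leqW\id$, equivalently when $f$ admits a computable realizer. This is the exact parallel on the positive side of Figure~\ref{fig:DIS} of the passage from ``c.e.'' to ``computable'' that sits opposite to productivity in Figure~\ref{fig:productive}. With this reading the task reduces to producing a problem that has a continuous realizer but no computable one, and I will exhibit such an $f$ directly rather than through a game or a diagonalisation.

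For the construction, fix any non-computable point $c\in\IN^\IN$; such a $c$ already exists over the base theory $\ZF+\DC$, hence a fortiori under $\ZFC$. Let $f:\IN^\IN\to\IN^\IN$ be the single-valued constant problem $f(p)=c$, taken on the standard (identity) representation of $\IN^\IN$, so that $\delta_X=\delta_Y=\id$. The constant map $F:\IN^\IN\to\IN^\IN,\ p\mapsto c$ is visibly a continuous realizer of $f$, whence $f\leq_\mathrm{W}^*\id$ and $f$ is continuous by Theorem~\ref{thm:dichotomy}(1). For the negative half, observe that every realizer $G$ of $f$ must satisfy $\delta_Y G(p)\in f\delta_X(p)=\{c\}$ for all $p$, so $G$ is forced to be the constant map with value $c$. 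Were any such $G$ computable, then $c=G(p)$ would be computable for a fixed computable input $p$, contradicting the choice of $c$; hence $f$ has no computable realizer, i.e.\ $f\not\leqW\id$ and $f$ is not effectively continuous.

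The verification is routine once the definition is pinned down, so the only genuine decision — and the step I expect to carry the weight of the argument — is the reading of ``effectively so'' and the correct bookkeeping of the realizer relation $\delta_Y G(p)\in f\delta_X(p)$ on the identity representation; everything else is immediate. What the example really isolates is that the continuous reducibility $\leq_\mathrm{W}^*$ and its computable companion $\leqW$ come apart already at the very bottom, since continuity of a realizer imposes no effectivity constraint on the data that realizer may hard-wire. I would close the discussion by stressing that this separation is orthogonal to the determinacy phenomena governing the discontinuous side of Figure~\ref{fig:DIS}: here no choice is used and the $\ZFC$ hypothesis is stated only to match the ambient setting, whereas the genuinely choice-dependent examples — obtained from Bernstein sets in Corollary~\ref{cor:Bernstein} — live on the side of problems failing to be effectively discontinuous.
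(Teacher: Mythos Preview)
Your interpretation rests on reading ``continuous, but not effectively so'' as ``has a continuous realizer but no computable realizer''. That reading is wrong here: the displayed theorem in the introduction is a restatement of Theorem~\ref{thm:AC}, and the surrounding paragraph, the reference to Corollary~\ref{cor:Bernstein}, and Figure~\ref{fig:DIS} all make clear that the intended statement is that (under $\ZFC$) there is a problem that is \emph{discontinuous} but not \emph{effectively discontinuous} in the sense of Definition~\ref{def:computable-discontinuity}. The word ``continuous'' in the quoted line is a slip for ``discontinuous''. Your own closing paragraph essentially diagnoses this: you observe that your construction needs no choice and that the ``genuinely choice-dependent examples'' live on the other side of the picture --- that other side \emph{is} the content of the theorem.

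Consequently your argument, while internally correct, proves the wrong thing. The paper's proof (Theorem~\ref{thm:AC}) is a transfinite recursion of length $2^{\aleph_0}$ that genuinely uses $\AC$: one enumerates all total continuous $\Phi_{r_\xi}$ and simultaneously builds a ``positive list'' $P$ (forcing, for each candidate discontinuity function $\Phi_{r_\xi}$, a total $\Phi_q$ with $\Phi_q\Phi_{r_\xi}(q)\in\langle f\rangle(\Phi_{r_\xi}(q))$) and a ``negative list'' $N$ (forcing, for each candidate realizer $\Phi_{r_\xi}$, a point where it fails). Cardinality bookkeeping keeps $P$ and $N$ disjoint throughout, and the resulting $f$ is total, parallelizable, discontinuous, and not effectively discontinuous. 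None of this is visible from the constant-function example; in particular, your $f$ is single-valued, and Proposition~\ref{prop:discontinuous} shows that no single-valued example can work for the intended statement.
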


In section~\ref{sec:games} we prove that the axiom of choice is actually required for such a construction.
This can be achieved with the help of a variant of Wadge games for problems $f:\In X\mto Y$, originally
considered by Nobrega and Pauly~\cite{Nob18,NP19}. In fact, we can prove the following result (see Theorem~\ref{thm:Wadge-game}),
part (1) of which is already due to Nobrega and Pauly.

\begin{theorem}[Wadge games]
We consider the Wadge game of a given problem ${f:\In X\mto Y}$.
Then the following hold:
\begin{enumerate}
\item $f$ is continuous $\iff$ Player II has a winning strategy for $f$,
\item $f$ is effectively discontinuous $\iff$ Player I has a winning strategy for $f$.
\end{enumerate}
\end{theorem}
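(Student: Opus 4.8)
The plan is to set up the Wadge game for a problem $f:\In X\mto Y$ precisely, then prove the two equivalences by exhibiting the translations between strategies and continuous functions. Recall the setup (following Nobrega and Pauly): in the Wadge game for $f$, Player I plays a sequence $p\in\IN^\IN$ coordinate by coordinate, and Player II responds, also building a sequence $q\in\IN^\IN$, but with the freedom to ``pass'' (not add a symbol on a given turn), as long as in the end $q$ is infinite. Player II wins a run if: whenever $p\in\dom(f\delta_X)$, the output satisfies $q\in\dom(\delta_Y)$ and $\delta_Y(q)\in f\delta_X(p)$; otherwise (i.e.\ $p\notin\dom(f\delta_X)$) Player II wins automatically. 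Player I wins exactly when Player II fails. The key structural fact is that a strategy for Player II that always eventually plays a symbol is exactly a continuous partial function $\IN^\IN\to\IN^\IN$, and conversely; this is the content of (1), essentially already in \cite{NP19}. I would first state and prove (1) carefully: given a continuous realizer $F$ of $f$, the standard argument shows $F$ can be computed by a monotone map with unbounded use, which is precisely a Player II strategy; conversely a winning Player II strategy yields, via the standard ``strategy as continuous function'' correspondence, a continuous map $F$ with $F(p)$ defined and $\delta_Y F(p)\in f\delta_X(p)$ for all $p\in\dom(f\delta_X)$, i.e.\ a continuous realizer.

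For (2), the direction ``$\Leftarrow$'' is the substantive new part. Assume $f$ is effectively discontinuous, witnessed by a continuous discontinuity function $D:\IN^\IN\to\IN^\IN$ which, given (a code for) any continuous function $G$, produces a point $D(\ulcorner G\urcorner)=:p_G\in\dom(f\delta_X)$ on which $G$ fails to be a realizer of $f$ at $p_G$, i.e.\ either $G(p_G)\notin\dom(\delta_Y)$ or $\delta_Y G(p_G)\notin f\delta_X(p_G)$. I would describe Player I's winning strategy as follows: Player I will, during the play, reconstruct the (finite approximations to the) strategy $\sigma$ that Player II is using, treat $\sigma$ as a continuous function $G_\sigma$, feed a code of $G_\sigma$ to $D$ to obtain the ``bad point'' $p_{G_\sigma}$, and then simply play $p_{G_\sigma}$ against $\sigma$. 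The point is that Player II's strategy, as it unfolds, is being revealed move by move; Player I must commit to its own moves before seeing Player II's full response, but since $D$ only needs a code of the \emph{strategy} (not of the run), and the strategy is fixed in advance, Player I can — using the recursion-theoretic / self-referential trick available because Player I controls the construction of its own play — arrange to be playing exactly $p_{G_\sigma}$. Concretely: Player I's strategy $\tau$, on a partial play, outputs the next symbol of $D(\ulcorner G_{\sigma}\urcorner)$ where $\sigma$ is the (unique, if it exists) Player II strategy consistent with the moves seen so far; to make this well defined one runs $\tau$ against a \emph{fixed} $\sigma$ at a time, so really the claim is: for every Player II strategy $\sigma$, the play of $\tau$ (defined using $\sigma$) against $\sigma$ is won by Player I. Against a fixed $\sigma$, the function $G_\sigma$ is a fixed continuous function, so $p:=D(\ulcorner G_\sigma\urcorner)\in\dom(f\delta_X)$ and the run of $G_\sigma$ on $p$ — which is exactly the run of $\sigma$ against Player I playing $p$ — fails the realizer condition, so Player II loses.

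For the ``$\Rightarrow$'' direction of (2): assume Player I has a winning strategy $\tau$. I would construct a continuous discontinuity function $D$ witnessing effective discontinuity of $f$. Given a code $\ulcorner G\urcorner$ of an arbitrary continuous function $G:\IN^\IN\to\IN^\IN$, convert $G$ into a Player II strategy $\sigma_G$ in the obvious way (Player II reads the same input symbols and plays $G$'s output, passing when $G$ produces nothing new), then run $\tau$ against $\sigma_G$ and let $D(\ulcorner G\urcorner)$ be the resulting Player I play $p$. Because $\tau$ is winning for Player I, this run is lost by Player II, which by the winning condition forces $p\in\dom(f\delta_X)$ and $G$ failing the realizer condition at $p$ (either $G(p)$ is not a $\delta_Y$-name or $\delta_Y G(p)\notin f\delta_X(p)$) — i.e.\ $D$ is exactly a discontinuity function for $f$, so $f$ is effectively discontinuous. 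Continuity of $D$ is immediate since each output bit depends on finitely many bits of $\ulcorner G\urcorner$.

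I expect the main obstacle to be the ``$\Leftarrow$'' direction of (2), specifically the self-reference: Player I must play the bad point of Player II's strategy, but a priori Player I learns Player II's moves only incrementally, while the definition of the bad point needs the \emph{whole} strategy. The resolution — analyze the game strategy-by-strategy (fix $\sigma$, then $\tau$ may depend on $\sigma$ in its \emph{definition} even though the \emph{run} only depends on the moves), and use that $D$ consumes a code of the strategy rather than of the run — must be spelled out carefully, and one must check that the resulting $\tau$ is genuinely a single strategy winning against every $\sigma$ (not a family of strategies), which is where the recursion theorem, already used in Theorem~\ref{thm:dichotomy}, does the real work; indeed this part of the proof will essentially re-derive Theorem~\ref{thm:dichotomy}(2) in game-theoretic language, and the cleanest route may be to deduce Theorem~\ref{thm:Wadge-game}(2) directly from Theorem~\ref{thm:dichotomy}(2) together with Theorem~\ref{thm:dichotomy}(1), by matching ``$\DIS\leq_\mathrm{W}^*f$'' with ``Player I wins'' via the explicit game-to-reduction dictionary sketched above.
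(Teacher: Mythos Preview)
Your treatment of (1) and of the implication ``Player~I has a winning strategy $\Rightarrow$ $f$ is effectively discontinuous'' in (2) is correct and matches the paper's proof. (You have swapped the direction labels in (2): where you write ``$\Leftarrow$'' you are arguing the forward implication, and vice versa.)

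The genuine gap is in ``$f$ effectively discontinuous $\Rightarrow$ Player~I wins''. You correctly isolate the difficulty---Player~I sees only Player~II's moves along the actual run, not a code for the full strategy $\sigma$, so Player~I cannot simply feed $\ulcorner G_\sigma\urcorner$ to $D$---but your proposed resolution (invoke the recursion theorem, or fall back on Theorem~\ref{thm:effective-discontinuity}) is not carried out, and in fact the recursion theorem is \emph{not needed} here. The paper's direct argument exploits a feature of the representation $\Phi$ that you overlook: a $\Phi$-name is merely a list of pairs (input prefix, output prefix) from the graph of a monotone word function. Player~I therefore need not encode the entire strategy $\sigma$; it suffices to encode \emph{any} continuous function that agrees with Player~II's output along the actual run. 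Concretely, if $h$ is a monotone approximation of $D$, Player~I sets $w_0:=h(\varepsilon)$ and, having played $w_0,\ldots,w_{i-1}$ and observed $v_0,\ldots,v_{i-1}$, lets $w_0\cdots w_i := h(\langle\overline{w_0},\overline{v_0}\rangle\cdots\langle\overline{w_0\cdots w_{i-1}},\overline{v_0\cdots v_{i-1}}\rangle)$. This is an ordinary induction on $i$ (no fixed-point argument), defines a single strategy $\tau$, and in the limit the sequence $p:=\langle\overline{w_0},\overline{v_0}\rangle\langle\overline{w_0w_1},\overline{v_0v_1}\rangle\cdots$ is a $\Phi$-name with $\Phi_p(r)=v_0v_1\cdots$ for $r:=w_0w_1\cdots=D(p)\in\dom(f)$. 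Since $D$ is a discontinuity function, $\Phi_pD(p)\notin fD(p)$, so Player~II loses. The apparent self-reference dissolves because $D$ is continuous: each $w_i$ depends only on a finite prefix of $p$, and that prefix is determined by the earlier moves.
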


This result implies that under the axiom of determinacy ($\AD$), which states that every Gale-Stewart game is determined, i.e., either player I or player II has a winning strategy, we really obtain a dichotomy between continuity and effective discontinuity
(see Corollary~\ref{cor:determinacy}).

\begin{corollary}
\label{cor:ZF-DC-AD}
In $\ZF+\DC+\AD$ every problem $f:\In X\mto Y$ is either continuous or effectively discontinuous.
\end{corollary}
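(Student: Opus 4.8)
The plan is to combine the two theorems already stated in the excerpt. By the Wadge games theorem, for any problem $f:\In X\mto Y$, continuity of $f$ is equivalent to Player II having a winning strategy in the Wadge game of $f$, and effective discontinuity of $f$ is equivalent to Player I having a winning strategy. So the corollary reduces to showing that in $\ZF+\DC+\AD$, the Wadge game associated with an arbitrary problem is determined, i.e., one of the two players has a winning strategy.

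The main point to check is that the Wadge game of a problem $f$ is (or can be coded as) a Gale--Stewart game on $\IN$ (equivalently on $\{0,1\}$, or on $\IN^\IN$), so that $\AD$ applies to it directly. Concretely, I would recall from Section~\ref{sec:games} the precise formulation of the game: both players alternately play natural numbers, building reals; Player I is trying to build an input $p$ together with a realizer-behaviour against which the continuous strategy of Player~II fails, and Player~II is trying to track $p$ and output some $q$ with $\delta_Y(q)\in f\delta_X(p)$. The winning condition for one of the players is then a condition on the resulting pair of infinite sequences, hence a subset $A\subseteq\IN^\IN$ (after the standard pairing/interleaving of the two players' moves), and a play is won by Player~I iff the resulting sequence lies in $A$. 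This is exactly the format $G(A)$ of a Gale--Stewart game to which $\AD$ applies. One should note that the existence of a realizer for $f$ — part of our definition of a problem, provable-or-assumed over $\ZF+\DC$ — is what guarantees the game is well defined as a game on reals with a genuine payoff set; no further choice is needed to set it up.

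Hence the argument runs as follows. First, fix an arbitrary problem $f:\In X\mto Y$ and form its Wadge game $G_f$, observing that it is a Gale--Stewart game $G(A_f)$ for a suitable payoff set $A_f\subseteq\IN^\IN$. Second, invoke $\AD$: the game $G(A_f)$ is determined, so either Player~I or Player~II has a winning strategy. Third, apply the Wadge games theorem: if Player~II has a winning strategy, then $f$ is continuous; if Player~I has a winning strategy, then $f$ is effectively discontinuous. Since these are the only two cases, $f$ is either continuous or effectively discontinuous, which is the claim. The role of $\DC$ is only in the background, to support the development of Weihrauch complexity and the Wadge games theorem over $\ZF$; the actual dichotomy is an immediate consequence of $\AD$ together with the already-established game characterizations.

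The only real obstacle is a bookkeeping one: making sure the Wadge game of an arbitrary problem — whose input and output live in abstract represented spaces $X,Y$ rather than in $\IN^\IN$ — is genuinely a Gale--Stewart game on a countable alphabet. This is handled by passing through the representations $\delta_X,\delta_Y$: the players actually manipulate names in $\IN^\IN$, not abstract points, so the game is literally played on $\IN$, and the payoff set $A_f$ is defined in terms of $\delta_X$, $\delta_Y$, $f$, and the realizer of $f$. Once that is spelled out (as it is in Section~\ref{sec:games}), there is nothing further to prove: $\AD$ does not distinguish between "nice" and "pathological" payoff sets, so it applies to $A_f$ regardless of its descriptive complexity, and the corollary follows at once.
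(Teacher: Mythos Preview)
Your overall approach is correct and matches the paper's: reduce the Wadge game of $f$ to a Gale--Stewart game, apply $\AD$, and then invoke Theorem~\ref{thm:Wadge-game}. Two points of correction, however. First, your informal description of the game is off: in the Wadge game of $f$, Player~I simply builds an input name $x$ while Player~II builds a putative output name $y$; Player~I does \emph{not} play any ``realizer-behaviour'', and Player~II wins iff $x\notin\dom(f^\r)$ or $y\in f^\r(x)$. Second, the ``bookkeeping'' you dismiss is precisely where the paper does its technical work, and it is not about the representations $\delta_X,\delta_Y$ (those are handled by passing to $f^\r$) but about the fact that in the paper's Wadge game both players play \emph{words} in $\IN^*$, not natural numbers, so the game is not literally a Gale--Stewart game. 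The paper bridges this gap in two explicit steps (Propositions~\ref{prop:Lipschitz-Gale-Stewart} and~\ref{prop:Wadge-Lipschitz}, combined in Corollary~\ref{cor:Wadge-Gale-Stewart}): first encode words as numbers via the precompletion $f\mapsto f^\w$ to pass from the Wadge game to a Lipschitz game, then totalize to identify the Lipschitz game with the Gale--Stewart game on $\langle\graph(\T(f^{\mathrm{rw}}))\rangle$. Once that is in place, your argument goes through verbatim.
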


This can be proved by a suitable reduction of the Wadge game to a Gale-Stewart game.
By {\em Weihrauch determinacy} we understand the condition that every problem $f:\In X\mto Y$ is either continuous or effectively discontinuous. Then Corollary~\ref{cor:ZF-DC-AD} can also be rephrased such that $\AD$ implies Weihrauch determinacy. On the other hand, it is easy to see that Wadge games for problems $f:\In X\mto Y$ generalize Wadge
games for subsets $A,B\In\IN^\IN$ as originally considered by Wadge and hence Weihrauch determinacy implies 
Wadge determinacy, which means that every Wadge game for subsets $A,B\In\IN^\IN$ is determined (see Figure~\ref{fig:determinacy}).

\begin{figure}[thb]
\begin{tikzpicture}[scale=1]
\node (AD) at (0,2) {Axiom of Determinacy $\AD$};
\node (WeiD) at (0,1) {Weihrauch determinacy};
\node (WD) at (0,0) {Wadge determinacy};
\node (PSP) at (2.5,-1) {Perfect Subset Property};
\node (CC) at (-2.5,-1) {Axiom of Countable Choice};
\draw[thick,->] (AD) -- (WeiD);
\draw[thick,->] (WeiD) -- (WD);
\draw[thick,->] (WD) -- (PSP);
\draw[thick,->] (WD) -- (CC);
\end{tikzpicture}
\caption{Determinacy properties in $\ZF+\DC$.} 
\label{fig:determinacy}
\end{figure}
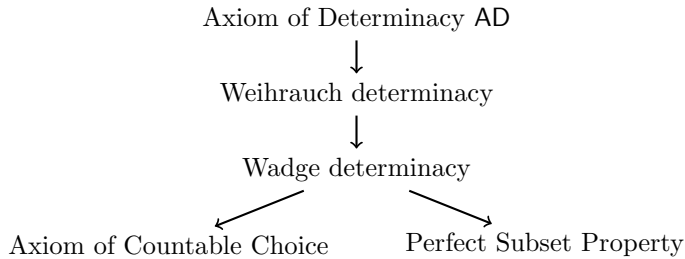

We leave it open how Weihrauch determinacy is exactly related to the other mentioned notions of determinacy. 
We just mention that it is not known whether Wadge determinacy implies $\AD$, and in fact there is 
the following conjecture attributed to Solovay (by Andretta~\cite{And03}\footnote{Andretta also proved that Wadge determinacy implies the axiom of countable choice for Baire space~\cite[Theorem~3]{And03}, whereas Wadge proved that 
Wadge determinacy implies the perfect subset property for Baire space~\cite[Theorem~II.C.2]{Wad83}.}).

\begin{conjecture}[Solovay]
In $\ZF+V=L(\IR)$ Wadge determinacy is equivalent to the axiom of determinacy $\AD$.
\end{conjecture}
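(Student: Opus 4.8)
The plan is to prove the two directions separately; the hypothesis $V=L(\IR)$ will be needed only for ``Wadge determinacy $\Rightarrow\AD$''. The other direction is immediate: the Wadge game of two sets $A,B\In\Baire$ --- in which players~I and~II alternately produce integers, player~II being permitted to pass but required to produce an infinite real $y$ in the end, player~I producing $x\in\Baire$, and~II winning exactly when $x\in A\Leftrightarrow y\in B$ --- can be recoded as a Gale--Stewart game on $\IN$ by letting a distinguished symbol mark a pass and reading $x,y$ off the non-pass moves; so $\AD$ directly furnishes a winning strategy for one of the players. (This is also contained in Corollary~\ref{cor:ZF-DC-AD} together with the remark preceding Figure~\ref{fig:determinacy} that the Weihrauch Wadge games subsume the classical ones.) The whole content of Solovay's conjecture therefore lies in the converse.

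For the converse, assume $\ZF+\DC+V=L(\IR)$ and Wadge determinacy, and reduce $\AD$ to an induction up the Wadge hierarchy. First, unwinding strategies gives the semi-linear ordering principle: for all $A,B\In\Baire$ there is a continuous $f$ with $f^{-1}[B]=A$, or a continuous $g$ with $g^{-1}[A]=\Baire\sm B$, according to whether player~II or player~I wins the Wadge game of $A,B$. Next, a Martin--Monk style argument --- running the Wadge game along a hypothetical sequence $A_0,A_1,A_2,\dots$ strictly descending in the Wadge ordering and extracting a contradiction from a flip-set analysis, using $\DC$ --- shows that the Wadge preorder is well-founded, so every $A\In\Baire$ carries a Wadge rank $\|A\|$, and the initial segments $\mathbf{\Gamma}_\alpha:=\{A\In\Baire:\|A\|<\alpha\}$ form an increasing transfinite sequence of boldface pointclasses, closed under continuous preimages and finite Boolean combinations, exhausting $\PP(\Baire)$ by comparability and well-foundedness; under $V=L(\IR)$ this hierarchy has length $\Theta$. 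It then suffices to prove, by induction on $\alpha\le\Theta$, that every member of $\mathbf{\Gamma}_\alpha$ is determined; granting this, every Gale--Stewart game on $\IN$ is determined, which is $\AD$.

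For the inductive step, fix $\alpha$ and suppose every set of Wadge rank $<\alpha$ is determined; one must determine the sets of rank exactly $\alpha$. Limits of uncountable cofinality are immediate (a union of determined sets), so the real work is the successor case and the case of cofinality $\omega$, and this is where the rigidity of $L(\IR)$ would be used: the levels $\mathbf{\Gamma}_\alpha$ of the Wadge hierarchy refine the canonical $\Sigma_1$-definability stratification of $L(\IR)$, and Steel's analysis of scales in $L(\IR)$ --- whose determinacy hypotheses are precisely what the induction hypothesis supplies --- identifies which of $\mathbf{\Gamma}_\alpha$ and its dual has the scale property. Feeding this into the periodicity theorems and the determinacy transfer theorems valid in $L(\IR)$ (Moschovakis, Martin, Steel) should then propagate determinacy from $\mathbf{\Gamma}_\alpha$ to the next Wadge level, closing the induction.

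The step I expect to be the obstacle --- and the reason the statement remains a conjecture --- is exactly the passage from ``$\mathbf{\Gamma}$-Wadge determinacy'' to ``$\mathbf{\Gamma}$-determinacy''. A Wadge game is a Gale--Stewart game of very low complexity relative to the pointclass $\mathbf{\Gamma}$ to which its payoff belongs, so the bare existence of a winning strategy in it is a priori much weaker than determinacy of all games with $\mathbf{\Gamma}$ payoff; the implication is known for Borel $\mathbf{\Gamma}$ (from Borel determinacy) and for $\PI{1}$ and a handful of further pointclasses (through refinements of the Martin--Monk method), but not in general. Andretta has reduced the general case to inner-model-theoretic statements about $L(\IR)$ --- roughly, the existence of suitable ``mice'' witnessing determinacy at each successor level --- and a complete proof of the conjecture would, I expect, stall precisely there, needing a genuinely new idea to carry the bootstrap uniformly through all the successor and $\omega$-cofinal steps of the Wadge hierarchy.
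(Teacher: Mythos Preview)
The statement you are addressing is explicitly labelled a \emph{Conjecture} in the paper and is attributed to Solovay; the paper provides no proof of it whatsoever. Immediately before stating the conjecture the author writes that ``it is not known whether Wadge determinacy implies $\AD$'', and the conjecture is offered only as context for the discussion of Weihrauch determinacy. So there is no ``paper's own proof'' against which to compare your proposal.

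Your write-up is in fact not a proof but an informed sketch of a possible strategy together with an honest diagnosis of where it breaks down, and you yourself say as much in the final paragraph: the passage from Wadge determinacy at level $\mathbf{\Gamma}_\alpha$ to full determinacy at that level is precisely the missing step, and you correctly note that this is the open problem. Everything before that---the easy direction $\AD\Rightarrow$ Wadge determinacy, the semi-linear ordering, the Martin--Monk well-foundedness argument, and the appeal to the Steel scale analysis in $L(\IR)$---is standard background, but none of it closes the induction. So your proposal is not a proof of the conjecture; it is a (reasonable) survey of the surrounding landscape that ends exactly where the genuine difficulty begins. That is consistent with the paper's treatment, which simply records the conjecture as open.
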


We have some partial results (see Proposition~\ref{prop:Lipschitz-AD}) that suggest that Weihrauch determinacy
is actually even closer related to $\AD$ than Wadge determinacy.

In any case, our results show that the exact situation at the bottom of the continuous Weihrauch lattice 
sensitively depends on the underlying axioms. Using $\AD$ we obtain the following result (as a consequence of Corollary~\ref{cor:ZF-DC-AD}).

\begin{theorem}
\label{thm:bottom}
In $\ZF+\DC+\AD$ the continuous Weihrauch lattice starts with three linearly ordered degrees, represented by
$0\leq_\mathrm{W}^*\id\leq_\mathrm{W}^*\DIS$.
\end{theorem}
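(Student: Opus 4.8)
The plan is to derive Theorem~\ref{thm:bottom} almost entirely from results already established in the excerpt, so the work is organizational rather than technical. First I would invoke Theorem~\ref{thm:dichotomy}: part~(1) says $f\leqFPW\id$ iff $f$ is continuous, and part~(2) says $\DIS\leqFPW f$ iff $f$ is effectively discontinuous. Combined with Corollary~\ref{cor:ZF-DC-AD}, which tells us that in $\ZF+\DC+\AD$ every problem is either continuous or effectively discontinuous, we get that every problem $f$ either satisfies $f\leqFPW\id$ or satisfies $\DIS\leqFPW f$. This is the crux: it says there is no degree strictly below $\DIS$ other than those at or below $\id$.

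Next I would assemble the three degrees explicitly. The bottom degree $0$ is the degree of the nowhere-defined problems; these are (vacuously) continuous, hence $0\leqFPW\id$, and they are the least degree since the empty realizer reduces to anything. The degree of $\id$ sits strictly above: $\id$ is continuous but $0\nleqFPW\id$ fails only if $\id$ is itself nowhere defined, which it is not, so $0\lW^*\id$ — more carefully, $\id\leqFPW 0$ would force $\id$ to have empty domain, contradiction, so the inequality is strict. Then $\DIS$ sits strictly above $\id$: since $\DIS$ is effectively discontinuous (this is immediate from Definition~\ref{def:computable-discontinuity} applied to $\DIS$, or from the trivial reduction $\DIS\leqFPW\DIS$ together with Theorem~\ref{thm:dichotomy}(2)), it is in particular discontinuous, so by Theorem~\ref{thm:dichotomy}(1) we have $\DIS\nleqFPW\id$; and $\id\leqFPW\DIS$ holds because $\DIS$ is not continuous yet is a genuine problem, or more directly because any continuous problem reduces to any discontinuous one — indeed $\id\leqFPW\DIS$ is a special case of the fact that $1$ is the second-smallest degree. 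Thus $0\lW^*\id\lW^*\DIS$.

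Finally I would argue linearity at the bottom: given any problem $f$ with $f\leqFPW\DIS$, either $f$ is continuous, in which case $f\leqFPW\id$ by Theorem~\ref{thm:dichotomy}(1), or $f$ is discontinuous, hence by Corollary~\ref{cor:ZF-DC-AD} effectively discontinuous, hence $\DIS\leqFPW f$ by Theorem~\ref{thm:dichotomy}(2), so $f\equivFPW\DIS$. Therefore the only degrees below $\DIS$ are $0$, the degree of $\id$ (absorbing all somewhere-defined continuous problems), and the degree of $\DIS$ itself, and these are linearly ordered. One should also note that nothing lies strictly between $0$ and $\id$ in the continuous lattice: a problem below $\id$ is continuous, and a somewhere-defined continuous problem is equivalent to $\id$ under $\leqFPW$ (this is stated in the excerpt's discussion of the degree $1$), while a nowhere-defined one is $0$.

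I do not expect a serious obstacle here, since all the heavy lifting is in Theorem~\ref{thm:dichotomy} and Corollary~\ref{cor:ZF-DC-AD}. The only point requiring a little care is the strictness of $\id\lW^*\DIS$ and the claim that everything below $\id$ collapses to just two degrees; for the latter one must recall that for the continuous \emph{non-strong} reducibility $\leqFPW$ all somewhere-defined continuous problems are genuinely $\leqFPW$-equivalent to $\id$ (as the excerpt notes, with the caveat that this can fail for the strong version $\leqFPW[\mathrm{sW}]$, though they still form the cone below $\id$). So the statement of Theorem~\ref{thm:bottom} as phrased — three linearly ordered degrees $0\leqFPW\id\leqFPW\DIS$ — follows cleanly.
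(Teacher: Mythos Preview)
Your proposal is correct and follows exactly the route the paper intends: the paper itself offers no explicit proof of Theorem~\ref{thm:bottom}, merely stating that it is ``a consequence of Corollary~\ref{cor:ZF-DC-AD}'', and you have faithfully unpacked that consequence using Theorem~\ref{thm:dichotomy} together with the background facts about the degrees $0$ and $1$ already recorded in the introduction. One minor wording issue: the phrase ``any continuous problem reduces to any discontinuous one'' is not quite the right justification for $\id\leq_\mathrm{W}^*\DIS$; the clean reason is simply that $\DIS$ has non-empty domain and $\id$ is the minimum among problems with non-empty domain (your final clause gets this right).
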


If we replace $\AD$ by the axiom of choice $\AC$, then the linear part of the order at the bottom is just $0\leq_\mathrm{W}^*\id$
and the situation becomes more complicated afterwards (and  by Theorem~\ref{thm:AC} this is even true if we move to the coarser parallelized version of the Weihrauch lattice).

We briefly summarize the structure of this article. In the following section~\ref{sec:recursion} we 
provide the version of the recursion theorem that we are going to use for the proof of our characterization 
of effectively discontinuous problems via the discontinuity problem.
We also introduce the universal function $\U$ and other related concepts.
In section~\ref{sec:effective-discontinuity} we introduce the discontinuity problem, the notion of effective discontinuity,
and we prove related results. In section~\ref{sec:games} we characterize effective discontinuity using Wadge games
and we study the relation to determinacy of other games such as Lipschitz games and Gale-Stewart games.
In section~\ref{sec:characteristic} we briefly discuss computable discontinuity of characteristic functions and we indicate
how this is related to (suitable generalizations of) the notion of productivity.
In the conclusions~\ref{sec:conclusions} we mention a number of open open problems and suggestions
for further directions of research.

\section{The Universal Function and the Recursion Theorem}
\label{sec:recursion}

We recall that a function $F:\In\IN^\IN\to\IN^\IN$ is {\em computable}, if there is some computable monotone word function $f:\IN^*\to\IN^*$
that {\em approximates} $F$ in the sense that $F(p)=\sup_{w\prefix p}f(w)$ holds for all $p\in\dom(F)$. 
Likewise, $F$ is continuous if and only if an analogous condition holds for an arbitrary monotone word function $f$.
Hence, we can define a representation $\Phi$ of the set of all continuous functions $F:\In\IN^\IN\to\IN^\IN$ (with natural domains\footnote{We note that for mere cardinality reasons
there is no representation of {\em all} partial continuous $F:\In\IN^\IN\to\IN^\IN$, but our representation $\Phi$ represents sufficiently many such functions in the sense that it 
contains an extension of any continuous partial function on Baire space.}) 
by encoding graphs of monotone word functions $f$ into names of $F$. In other words, if $\w:\IN\to\IN^*$ is a standard bijective numbering 
of $\IN^*$, then $p=\langle n_0,k_0\rangle\langle n_1,k_1\rangle...$ is a name of an extension $\Phi_p$ of $F$ if $F$ is approximated by some monotone $f:\IN^*\to\IN^*$ with
$f(\w_{n_i})=\w_{k_i}$ and for each $p\in\dom(F)$ and $n\in\IN$ there is some $i\in\IN$ with $\w_{n_i}\prefix p$ and $|\w_{n_i}|>n$.
Here $\langle n,k\rangle:=\frac{1}{2}(n+k)(n+k+1)+k$ denotes the usual {\em Cantor pairing function} for $n,k\in\IN$. 
Intuitively, $F=\Phi_p$ means that $p$ is a listing of a sufficiently large portion of the graph of a monotone function $f:\IN^*\to\IN^*$ that approximates $F$.
In order to guarantee that $\Phi$ is a total representation, one still needs to clarify how to deal with inconsistent names $p$,
i.e., names for which there is no suitable word function $f$. 
Inconsistency can be recognized (i.e., inconsistent names form an open set) and hence
one can just consider those $p$ as names of the nowhere defined function.
See \cite[Definition~3.2.9]{Wei87} for the technical details of such a construction of $\Phi$.

Now we can define a computable {\em universal function} 
\[\U:\In\IN^\IN\to\IN^\IN,\langle q,p\rangle\mapsto\Phi_q(p)\] 
for all $p,q\in\IN^\IN$ \cite[Theorem~3.2.16~(1)]{Wei87}. Here $\langle q,p\rangle:=q(0)p(0)q(1)p(1)...$ denotes the standard pairing function on Baire space.
Weihrauch~\cite[Theorems~3.5, 2.10, Corollary~2.11]{Wei85} (see also \cite[Theorem~3.2.16]{Wei87}) proved the following version of the smn-theorem for the representation $\Phi$
that comes in a version for computable and a version for continuous functions.

\begin{theorem}[smn-Theorem]
\label{thm:smn}
For every computable (continuous) partial function $F:\In\IN^\IN\to\IN^\IN$ there exists a computable (continuous) total function
$S:\IN^\IN\to\IN^\IN$ such that $\Phi_{S(q)}(p)=F\langle q,p\rangle$ for all $p,q\in\IN^\IN$.
\end{theorem}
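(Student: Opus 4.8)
The plan is to construct $S$ so that, for each parameter $q$, the name $S(q)$ encodes a monotone word function $f_q\colon\IN^*\to\IN^*$ that approximates the partial function $p\mapsto F\langle q,p\rangle$; once such an $f_q$ is available uniformly in $q$, turning it into a legitimate $\Phi$-name is routine, and $\Phi_{S(q)}(p)=F\langle q,p\rangle$ then follows from the way $\Phi$ was defined (in the sense that $\Phi_{S(q)}$ is the extension of $p\mapsto F\langle q,p\rangle$ described there).

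First I would fix a monotone word function $g\colon\IN^*\to\IN^*$ approximating $F$, i.e.\ $F(r)=\sup_{w\prefix r}g(w)$; such a $g$ exists by the characterisation of continuity recalled above, and it may be chosen computable when $F$ is computable. For $q\in\IN^\IN$ define the interleaving operation on words by
\[\iota_q(v):=q(0)\,v(0)\,q(1)\,v(1)\cdots q(k-1)\,v(k-1)\qquad(k=|v|),\]
so that $\iota_q(v)$ is precisely the length-$2|v|$ prefix of $\langle q,r\rangle$ whenever $v\prefix r$. Put $f_q:=g\circ\iota_q$. Both $g$ and $\iota_q$ are monotone, hence so is $f_q$. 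The words $\iota_q(v)$, $v\prefix r$, run exactly through the even-length prefixes of $\langle q,r\rangle$, which are cofinal among all prefixes; since $g$ is monotone, every odd-length prefix yields an output squeezed between two consecutive even ones, so
\[\sup_{v\prefix r}f_q(v)=\sup_{w\prefix\langle q,r\rangle}g(w)=F\langle q,r\rangle.\]
Thus $f_q$ approximates $r\mapsto F\langle q,r\rangle$.

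Next I would take $S(q)$ to be the canonical name listing the full graph of $f_q$: with $\w$ the standard numbering of $\IN^*$, let the $m$-th block of $S(q)$ be $\langle m,\,\w^{-1}(f_q(\w_m))\rangle$. This is a consistent $\Phi$-name, since it lists an honest graph of a genuine monotone function, and the density clause of $\Phi$ holds trivially because every word $\w_m$ is listed. Hence $\Phi_{S(q)}$ is exactly the function approximated by $f_q$, namely $p\mapsto F\langle q,p\rangle$ (extended to the natural domain, as always in $\Phi$). Finally, the $m$-th block of $S(q)$ depends only on $\w_m$ and on $q(0),\ldots,q(|\w_m|-1)$ — via $\iota_q(\w_m)$ — together with the fixed $g$; therefore $S\colon\IN^\IN\to\IN^\IN$ is total and continuous, and it is computable as soon as $g$, equivalently $F$, is. This settles the continuous and the computable versions at once.

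The only genuinely delicate point is the bookkeeping against the exact definition of $\Phi$ recalled before the statement: one has to verify that the pairs $(\w_m,f_q(\w_m))$ meet the consistency and density conditions there, so that $\Phi_{S(q)}$ is not collapsed to the nowhere-defined function but is really the function approximated by $f_q$, and that the domain convention for $\Phi_{S(q)}$ lines up with $\{p:\langle q,p\rangle\in\dom(F)\}$, so that $\Phi_{S(q)}(p)=F\langle q,p\rangle$ holds for all $p,q$ in the intended sense. All of this is routine, and it is what the cited theorems of Weihrauch supply; the interleaving construction of $f_q$ is the mathematical heart of the argument.
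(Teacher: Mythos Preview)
Your argument is correct. The paper itself does not give a proof of this theorem; it simply states the result and attributes it to Weihrauch~\cite{Wei85,Wei87}. What you have written is essentially the standard proof one finds in those references: pick a monotone approximation $g$ of $F$, precompose with the interleaving $\iota_q$ to obtain a monotone approximation $f_q$ of $p\mapsto F\langle q,p\rangle$, and list the full graph of $f_q$ as a $\Phi$-name. Your final paragraph correctly identifies the only genuinely delicate point, namely that $\Phi_{S(q)}$ is in general an extension of $p\mapsto F\langle q,p\rangle$ (since $\Phi$ represents only functions with \emph{natural} domains, as the paper's footnote makes explicit), so the stated equality must be read in that sense; this is harmless for every application of the theorem in the paper.
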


Among other things this result implies that $\Phi$ is precomplete.
We recall that in general a representation $\delta:\In\IN^\IN\to X$ of a set $X$ is called {\em precomplete}, if for every computable $F:\In\IN^\IN\to\IN^\IN$ there exists a total computable $G:\IN^\IN\to\IN^\IN$ such that $\delta F(p)=\delta G(p)$ for all $p\in\dom(F)$. In other words, precomplete representations are exactly those under which partial computable functions can be extended to total ones. 

Using the smn-theorem one can prove the following uniform version of the recursion theorem along the same lines as the classical recursion theorem.
It is an immediate corollary of a more general result due to Kreitz and Weihrauch~\cite[Theorem~3.4]{KW85}  (see also \cite[Theorem~3.3.20]{Wei87}), which characterizes precomplete
representations following Ershov's characterization of precomplete numberings.

\begin{theorem}[Uniform recursion theorem]
\label{thm:recursion-theorem}
There exists a total computable function $T:\IN^\IN\to\IN^\IN$ such that 
$\Phi_{T(p)}=\Phi_{\Phi_pT(p)}$
for all $p\in\IN^\IN$ such that $\Phi_p$ is total.
\end{theorem}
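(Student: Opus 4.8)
The plan is to prove this exactly as one proves Kleene's recursion theorem, using the smn-theorem (Theorem~\ref{thm:smn}) in place of the classical $s^m_n$-functions and the universal function $\U$ in place of the classical universal machine; alternatively one could simply invoke the characterization of precomplete representations of Kreitz and Weihrauch \cite[Theorem~3.4]{KW85}, of which this is an instance. I will describe the direct argument.

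First I would observe that the partial function
\[\psi:\In\IN^\IN\to\IN^\IN,\qquad \psi\langle\langle p,q\rangle,r\rangle:=\U\langle\U\langle p,\U\langle q,q\rangle\rangle,r\rangle\]
is computable, being a composition of the computable universal function $\U$ with the (computable, computably invertible) iterated pairing and unpairing maps on Baire space; intuitively $\psi\langle\langle p,q\rangle,r\rangle=\Phi_{\Phi_p(\Phi_q(q))}(r)$, so $\psi$ packages the ``double self‑application'' $q\mapsto\Phi_q(q)$ followed by an application of $\Phi_p$. Applying the smn-theorem to $\psi$, with $\langle p,q\rangle$ as the parameter that gets internalized, yields a \emph{total} computable $S:\IN^\IN\to\IN^\IN$ with
\[\Phi_{S\langle p,q\rangle}(r)=\psi\langle\langle p,q\rangle,r\rangle=\Phi_{\Phi_p(\Phi_q(q))}(r)\qquad\text{for all }p,q,r\in\IN^\IN,\]
hence $\Phi_{S\langle p,q\rangle}=\Phi_{\Phi_p(\Phi_q(q))}$ as partial functions whenever $\Phi_q(q)$ and $\Phi_p(\Phi_q(q))$ are defined. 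Since $S$ is total, the map $\langle p,q\rangle\mapsto S\langle p,q\rangle$ is again total computable, so a second application of the smn-theorem produces a total computable $N:\IN^\IN\to\IN^\IN$ with $\Phi_{N(p)}(q)=S\langle p,q\rangle$ for all $p,q$.

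Now I would set $T(p):=S\langle p,N(p)\rangle$, which is total computable as a composition of total computable maps. The heart of the construction is the self-reference $\Phi_{N(p)}(N(p))=S\langle p,N(p)\rangle=T(p)$, a value that is \emph{always} defined. Consequently, for any $p$ with $\Phi_p$ total the value $\Phi_p(T(p))$ is defined, and we obtain
\[\Phi_{T(p)}=\Phi_{S\langle p,N(p)\rangle}=\Phi_{\Phi_p(\Phi_{N(p)}(N(p)))}=\Phi_{\Phi_pT(p)},\]
which is the assertion.

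The only genuinely non-classical point — and the one I expect to be the main thing to check carefully — is that the representation $\Phi$ is precomplete, i.e.\ that both invocations of the smn-theorem actually return \emph{total} functions $S$ and $N$ even though $\psi$ is merely partial; this is precisely what makes the diagonal value $T(p)=\Phi_{N(p)}(N(p))$ available and is exactly the content of Theorem~\ref{thm:smn}. Beyond that, the work is routine bookkeeping with the nested pairing on $\IN^\IN$ and with the partiality of $\U$, neither of which causes trouble once the totality hypothesis on $\Phi_p$ is in force (so that $\Phi_p(T(p))$ is guaranteed to be defined).
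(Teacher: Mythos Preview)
Your proof is correct and follows exactly the approach the paper indicates: the paper states (without details) that the result ``can [be proved] along the same lines as the classical recursion theorem'' using the smn-theorem, and alternatively notes it is an immediate corollary of the Kreitz--Weihrauch characterization of precomplete representations---you carry out the former explicitly and even mention the latter as an option. There is nothing to add.
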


As a corollary of this theorem we obtain the following parameterized version of the recursion theorem that also comes in a version for computable and a version for continuous functions.

\begin{corollary}[Parameterized recursion theorem]
\label{cor:recursion-theorem}
For every computable (continuous) function $F:\In\IN^\IN\to\IN^\IN$ there exists a total computable (continuous) function $R:\IN^\IN\to\IN^\IN$ such that
$\U R(q)=F\langle q,R(q)\rangle$ for all $q\in\IN^\IN$.
\end{corollary}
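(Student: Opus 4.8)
The plan is to deduce this from the uniform recursion theorem (Theorem~\ref{thm:recursion-theorem}) together with the smn-theorem (Theorem~\ref{thm:smn}), handling the computable and continuous cases uniformly by the same argument. First I would use the smn-theorem applied to a suitable composite function to produce, from $F$, a total computable (resp. continuous) function $S:\IN^\IN\to\IN^\IN$ such that for all $q,p,r\in\IN^\IN$ we have $\Phi_{S\langle q,p\rangle}(r)=F\langle q,\U\langle p,r\rangle\rangle$; here I package the two parameters $q$ and $p$ into one using the pairing on Baire space, and I note that the map $\langle q,p,r\rangle\mapsto F\langle q,\U\langle p,r\rangle\rangle$ is computable (resp. continuous) since $F$ and $\U$ are. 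Concretely, applying Theorem~\ref{thm:smn} to this function yields $S$ with $\Phi_{S\langle q,p\rangle}(r)=F\langle q,\U\langle p,r\rangle\rangle$ for all $q,p,r$.

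Next, for each fixed parameter $q$, I want a fixed point of the map $p\mapsto S\langle q,p\rangle$. Observe that $p\mapsto S\langle q,p\rangle$ is total computable (resp. continuous), so by the smn-theorem once more there is a total computable (resp. continuous) function $E:\IN^\IN\to\IN^\IN$ with $\Phi_{E(q)}(p)=S\langle q,p\rangle$ for all $q,p$; in particular $\Phi_{E(q)}$ is total for every $q$. Now apply the uniform recursion theorem, Theorem~\ref{thm:recursion-theorem}, with the fixed total computable $T$: since $\Phi_{E(q)}$ is total, we get $\Phi_{T(E(q))}=\Phi_{\Phi_{E(q)}T(E(q))}=\Phi_{S\langle q,\,T(E(q))\rangle}$ for all $q$. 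Set $R_0(q):=T(E(q))$, a total computable function, so that $\Phi_{R_0(q)}=\Phi_{S\langle q,R_0(q)\rangle}$ for all $q$. Evaluating both sides at an arbitrary argument $r$ and using the defining property of $S$ gives $\Phi_{R_0(q)}(r)=\Phi_{S\langle q,R_0(q)\rangle}(r)=F\langle q,\U\langle R_0(q),r\rangle\rangle$.

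Finally I need to convert this into the desired form $\U R(q)=F\langle q,R(q)\rangle$. The natural choice is to "diagonalize", feeding $R_0(q)$ into its own program. Define $R(q):=\langle R_0(q),R_0(q)\rangle$, a total computable (resp. continuous) function of $q$. Then by the definition of $\U$ we have $\U R(q)=\U\langle R_0(q),R_0(q)\rangle=\Phi_{R_0(q)}(R_0(q))$, and by the displayed identity above (taking $r=R_0(q)$) this equals $F\langle q,\U\langle R_0(q),R_0(q)\rangle\rangle=F\langle q,\U R(q)\rangle$. This is not yet literally $F\langle q,R(q)\rangle$, so the cleanest fix is to arrange from the start that $\U\langle R_0(q),R_0(q)\rangle=R(q)$ by building the recursion around the function $F'\langle q,s\rangle:=F\langle q,\langle s,s\rangle\rangle$ and the observation $\langle s,s\rangle$ can be recovered; more precisely, I would run the whole construction with $F$ replaced by a function engineered so that the self-application already outputs a pair of the right shape, so that the resulting $R$ satisfies $\U R(q)=R(q)$-style bookkeeping and $\U R(q)=F\langle q,R(q)\rangle$ directly. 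The main obstacle — and the only place requiring genuine care — is exactly this bookkeeping: threading the parameter $q$ through the two nested applications of smn and lining up the pairing conventions on Baire space so that the final equation comes out with $R(q)$, not some decoded variant of it, in the second coordinate. Once the pairing/unpairing is handled consistently, everything else is a direct composition of Theorems~\ref{thm:smn} and~\ref{thm:recursion-theorem}, and the computable and continuous versions are proved by the identical argument using the respective clauses of those theorems. \QED
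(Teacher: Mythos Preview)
Your construction contains a genuine gap that you yourself flag but do not close. With your choice of $S$ satisfying $\Phi_{S\langle q,p\rangle}(r)=F\langle q,\U\langle p,r\rangle\rangle$, the diagonal $R(q):=\langle R_0(q),R_0(q)\rangle$ yields only
\[
\U R(q)=F\langle q,\U R(q)\rangle,
\]
which is a different fixed-point equation from the one required. Your suggested remedy via $F'\langle q,s\rangle:=F\langle q,\langle s,s\rangle\rangle$ does not help: rerunning the construction with $F'$ still leaves a stray $\U$ in the second argument and produces $\U R(q)=F\langle q,\langle\U R(q),\U R(q)\rangle\rangle$, not $F\langle q,R(q)\rangle$. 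The hand-wave about ``engineering $F$ so that the self-application already outputs a pair of the right shape'' is not an argument.

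The fix is not a matter of replacing $F$ but of removing the spurious $\U$ from your definition of $S$. If you instead apply the smn-theorem to obtain $S$ with
\[
\Phi_{S\langle q,p\rangle}(r)=F\langle q,\langle p,r\rangle\rangle,
\]
then your $E$ and $R_0:=T\circ E$ give $\Phi_{R_0(q)}(r)=F\langle q,\langle R_0(q),r\rangle\rangle$, and setting $R(q):=\langle R_0(q),q\rangle$ yields
\[
\U R(q)=\Phi_{R_0(q)}(q)=F\langle q,\langle R_0(q),q\rangle\rangle=F\langle q,R(q)\rangle
\]
directly. This is precisely the paper's argument: it applies the smn-theorem twice to get $\Phi_{\Phi_{S(q)}(r)}(p)=F\langle p,\langle r,q\rangle\rangle$ and then sets $R(q):=\langle TS(q),q\rangle$. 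The key point you missed is that the second coordinate fed to $F$ should be the \emph{pair} $\langle\text{code},\text{argument}\rangle$ itself, not its evaluation under $\U$.
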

\begin{proof}
We prove the version of the statement for continuous functions. For computable functions one just has to replace the word ``continuous'' by ``computable'' in all occurrences.
Let $F:\In\IN^\IN\to\IN^\IN$ be continuous. Then by a double application of the smn-theorem (Theorem~\ref{thm:smn}) there is a total continuous $S:\IN^\IN\to\IN^\IN$ such that
$\Phi_{\Phi_{S(q)}(r)}(p)=F\langle p,\langle r,q\rangle\rangle$ for all $p,q\in\IN^\IN$. Let $T:\IN^\IN\to\IN^\IN$ be the computable function from the
recursion theorem (Theorem~\ref{thm:recursion-theorem}). Then $R:\IN^\IN\to\IN^\IN$ with $R(q):=\langle TS(q),q\rangle$ for all $q\in\IN^\IN$ is continuous
and satisfies
\[\U R(q)=\Phi_{TS(q)}(q)=\Phi_{\Phi_{S(q)}TS(q)}(q)=F\langle q,\langle TS(q),q\rangle\rangle=F\langle q,R(q)\rangle\] 
for all $q\in\IN^\IN$.
\end{proof}

In the next section we will use this parameterized version of the recursion theorem to prove our characterization of
effectively discontinuous functions.

\section{Effectively Discontinuous Problems}
\label{sec:effective-discontinuity}

We now introduce a concept of {\em computable} (and {\em effective}) {\em discontinuity}. 
These are strengthenings of the concept of discontinuity in the sense that the discontinuity is witnessed by
a continuous function $D:\IN^\IN\to\IN^\IN$. 
A computably discontinuous problem $f$ is supposed to have no continuous realizer $\Phi_q$ and the computable discontinuity function $D:\IN^\IN\to\IN^\IN$
computes for every candidate $\Phi_q$ a witnessing input $D(q)$ that shows that the candidate $\Phi_q$ fails to realize $f$ on that particular input.

\begin{definition}[Computable discontinuity]
\label{def:computable-discontinuity}
Let $(X,\delta_X)$ and $(Y,\delta_Y)$ be represented spaces.
A problem $f:\In X\mto Y$ is called {\em computably discontinuous} ({\em effectively discontinuous}) if there is a computable (continuous) $D:\IN^\IN\to\IN^\IN$ such that for all $q\in\IN^\IN$ we obtain
\[D(q)\in\dom(f\delta_X)\mbox{ and }\delta_Y\Phi_qD(q)\not\in f\delta_XD(q).\]
In this case the function $D$ is called a {\em discontinuity function} of $f$.
\end{definition}

We emphasize that we consider the condition $\delta_Y\Phi_qD(q)\not\in f\delta_XD(q)$ also as satisfied when the left-hand side is undefined. 
If one does not apply this convention, then one has to write out the condition somewhat more detailed as 
\[D(q)\in\dom(f\delta_X)\mbox{ and }(D(q)\in\dom(\delta_Y\Phi_q)\TO\delta_Y\Phi_qD(q)\not\in f\delta_XD(q)).\]
Clearly, every computably discontinuous problem $f$ is effectively discontinuous, and every
effectively discontinuous problem is discontinuous.
The definition of computable (effective) discontinuity is such that a problem $f:\In X\mto Y$ has the respective
property if and only if its {\em realizer version} $f^{\mathrm r}:\In\IN^\IN\mto\IN^\IN$ with $f^{\mathrm r}:=\delta_Y^{-1}\circ f\circ\delta_X$ 
has the property. This implies that it suffices to study the concepts of computable and effective discontinuity on Baire space $\IN^\IN$.
We will prove that effective and computable discontinuity can be both characterized in terms of the following
discontinuity problem.

\begin{definition}[Discontinuity problem]
\label{def:dis}
We define the {\em discontinuity problem} by
$\DIS:\IN^\IN\mto\IN^\IN,p\mapsto\{q\in\IN^\IN:\U(p)\not=q\}$.
\end{definition}

We note that $\DIS$ is total, i.e., for instances $p\not\in\dom(\U)$ the problem $\DIS$ can provide arbitrary $q\in\IN^\IN$
as solutions.
It is a direct consequence of the parameterized recursion theorem from Corollary~\ref{cor:recursion-theorem}
that $\DIS$ is computably discontinuous.

\begin{proposition} 
\label{prop:DIS-computably-discontinuous} 
$\DIS$ is computably discontinuous.
\end{proposition}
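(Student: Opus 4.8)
The plan is to feed the universal function $\U$ itself into the parameterized recursion theorem (Corollary~\ref{cor:recursion-theorem}) and to check that the resulting total computable selector is a discontinuity function for $\DIS$.

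First I would reduce the statement to what actually has to be verified. Since $\DIS:\IN^\IN\mto\IN^\IN$ lives on Baire space, where the representations are the identity, Definition~\ref{def:computable-discontinuity} asks for a computable $D:\IN^\IN\to\IN^\IN$ with $D(q)\in\dom(\DIS)$ and $\Phi_qD(q)\notin\DIS(D(q))$ for every $q\in\IN^\IN$. The first requirement is automatic, since $\DIS$ is total. Unravelling $\DIS(p)=\{r:\U(p)\neq r\}$, the second requirement says precisely: whenever $\Phi_qD(q)$ is defined, it must equal $\U(D(q))$ (which therefore has to be defined as well); and if $\Phi_qD(q)$ is undefined the condition already holds by the convention attached to Definition~\ref{def:computable-discontinuity}. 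So what I need is a ``fixed point'' $D(q)$ on which running the machine $\Phi_q$ agrees with running the universal machine on $D(q)$.

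Next I would apply Corollary~\ref{cor:recursion-theorem} to the computable function $F:=\U$. This yields a total computable $R:\IN^\IN\to\IN^\IN$ such that $\U R(q)=\U\langle q,R(q)\rangle=\Phi_qR(q)$ for all $q\in\IN^\IN$. I then set $D:=R$, which is total and computable. Fixing $q$: if $\Phi_qD(q)=\Phi_qR(q)$ is defined, the recursion identity gives $\U(D(q))=\U R(q)=\Phi_qR(q)=\Phi_qD(q)$, so $\U(D(q))$ is defined and $\Phi_qD(q)$ is exactly the unique point excluded from $\DIS(D(q))=\IN^\IN\setminus\{\U(D(q))\}$; hence $\Phi_qD(q)\notin\DIS(D(q))$. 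If instead $\Phi_qD(q)$ is undefined, the condition holds by convention. Either way $D$ is a discontinuity function, so $\DIS$ is computably discontinuous.

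There is essentially no obstacle here beyond identifying the correct instance $F=\U$ of the recursion theorem; the only point requiring a little care is the bookkeeping with partiality---reading the recursion equation $\U R(q)=\Phi_qR(q)$ so that definedness of the right-hand side transfers to the left, and matching this against the undefinedness convention built into Definition~\ref{def:computable-discontinuity}.
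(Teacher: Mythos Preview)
Your proposal is correct and is essentially the paper's own proof: apply the parameterized recursion theorem with $F:=\U$ to obtain a total computable $D$ satisfying $\U D(q)=\Phi_qD(q)$, which is exactly what makes $D$ a discontinuity function for $\DIS$. You spell out the partiality bookkeeping more carefully than the paper (which simply asserts that the resulting $D$ is a discontinuity function), but the underlying argument is identical.
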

\begin{proof}
By Corollary~\ref{cor:recursion-theorem} there is a computable function $D:\IN^\IN\to\IN^\IN$
such that $\U D(p)=\U\langle p,D(p)\rangle=\Phi_pD(p)$ for all $p\in\IN^\IN$. This function $D$ is hence a computable discontinuity function for $\DIS$.
\end{proof}

We generalize this observation with the following result that is illustrated in the diagram in Figure~\ref{fig:DIS}.

\begin{theorem}[Effective discontinuity]
\label{thm:effective-discontinuity}
Let $f:\In X\mto Y$ be a problem. Then:
\begin{enumerate}
\item $\DIS\leqW f\iff f$ is computably discontinuous,
\item $\DIS\leq_{\rm W}^* f\iff f$ is effectively discontinuous.
\end{enumerate}
In both cases one can replace ${\rm W}$ by its strong counterpart ${\rm sW}$.
\end{theorem}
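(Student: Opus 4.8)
The plan is to prove Theorem~\ref{thm:effective-discontinuity} by establishing both implications of part~(2) (the continuous/effective case), then observing that the same argument with ``computable'' in place of ``continuous'' gives part~(1), and that nothing in the construction uses the ``input'' component of a Weihrauch reduction, so the strong counterpart follows verbatim. Since effective discontinuity of $f$ is equivalent to effective discontinuity of its realizer version $f^{\mathrm r}:\In\IN^\IN\mto\IN^\IN$ (as noted before Definition~\ref{def:dis}), and likewise $\DIS\leqFPW f\iff\DIS\leqFPW f^{\mathrm r}$, I may assume $X=Y=\IN^\IN$ and $\delta_X=\delta_Y=\id$, so that a realizer of $f$ is just a function $F$ with $F(p)\in f(p)$ for all $p\in\dom(f)$, and ``$\Phi_q$ realizes $f$'' means $\Phi_q(p)\in f(p)$ for all $p\in\dom(f)$.

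For the direction ``$\Longleftarrow$'' of~(2), suppose $f$ is effectively discontinuous with discontinuity function $D:\IN^\IN\to\IN^\IN$, so for every $q$ we have $D(q)\in\dom(f)$ and $\Phi_q D(q)\notin f(D(q))$ (with the undefinedness convention). I want continuous $H,K$ such that whenever $G$ realizes $f$, the map $H\langle\id,GK\rangle$ realizes $\DIS$. The idea: given an instance $p$ of $\DIS$, we must output some $q\neq\U(p)$. Take $K(p):=D(p)$; this is a valid instance of $f$, so $GK(p)=GD(p)\in f(D(p))$. Now by Definition~\ref{def:computable-discontinuity} applied with $q=p$, we have $\Phi_p D(p)\notin f(D(p))$, hence $GD(p)\neq\Phi_p D(p)=\U\langle p,D(p)\rangle$; but that is not quite $\U(p)$. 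The fix is to absorb this via the recursion-theoretic bookkeeping already packaged in $\DIS$: more directly, observe that $\U(p)=\Phi_{p}$ applied in the appropriate slot is handled by the \emph{definition} of $\DIS$ via $p\mapsto\{q:\U(p)\neq q\}$, and the natural move is to note $GD(p)\in f(D(p))$ while $\Phi_p D(p)\notin f(D(p))$ forces $GD(p)\neq\Phi_p(D(p))$; one then uses the smn/recursion machinery of Section~\ref{sec:recursion} to replace $D$ by a function $D'$ with $\U D'(p)=\Phi_p D'(p)$ in the relevant sense, so that $GD'(p)\neq\Phi_p D'(p)=\U(D'(p))$ and we may set $H\langle p, r\rangle:=r$, $K:=D'$. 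I should present this carefully: the cleanest route is probably to first reduce to the case $D=D'$ using Corollary~\ref{cor:recursion-theorem}, exactly as in the proof of Proposition~\ref{prop:DIS-computably-discontinuous}.

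For the direction ``$\Longrightarrow$'' of~(2), suppose $\DIS\leqFPW f$ via continuous $H,K$, so that for every realizer $G$ of $f$, $H\langle\id,GK\rangle$ realizes $\DIS$, i.e., $H\langle p,GK(p)\rangle\neq\U(p)$ for all $p$. I want a continuous discontinuity function $D$ for $f$. Given a candidate realizer $\Phi_q$ of $f$, I need an input $D(q)\in\dom(f)$ on which $\Phi_q$ fails. The trick — and this is where the parameterized recursion theorem does its work, paralleling the use of the recursion theorem for the ``$\Longleftarrow$'' direction of Myhill's theorem — is a self-referential choice of the $\DIS$-instance $p$. Using Corollary~\ref{cor:recursion-theorem} with a suitable continuous $F$ (built from $H$ and $K$ so that $F\langle q,r\rangle$ encodes ``on input $r$, run $H\langle r, \Phi_q K(r)\rangle$''), obtain continuous $R$ with $\U R(q)=F\langle q,R(q)\rangle=H\langle R(q),\Phi_q K(R(q))\rangle$. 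Set $D(q):=K(R(q))$. If $\Phi_q$ were a realizer of $f$, then taking $G=\Phi_q$ in the hypothesis gives $H\langle R(q),\Phi_q K(R(q))\rangle\neq\U(R(q))$, contradicting the displayed equality; more precisely, one shows directly that $D(q)\in\dom(f)$ (because $K$ maps into $\dom(\DIS)=\IN^\IN$ and instances of $f$ arising from $K$ must lie in $\dom(f)$ — here one uses that $\DIS$ is total and the reduction must work, so $K(p)\in\dom(f)$ for all $p$) and that $\Phi_q D(q)\notin f(D(q))$: otherwise $\Phi_q$ correctly solves this instance, making $H\langle R(q),\Phi_q K(R(q))\rangle$ a valid $\DIS$-output at $R(q)$, i.e.\ $\neq\U(R(q))$, contradicting $\U R(q)=H\langle R(q),\Phi_q K(R(q))\rangle$.

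The main obstacle is getting the self-reference in the ``$\Longrightarrow$'' direction exactly right: one must feed into the recursion theorem the \emph{name} $q$ of the candidate realizer as a parameter and simultaneously close the loop through the $\DIS$-instance produced by $K$, while checking that $K(R(q))$ genuinely lands in $\dom(f)$ — this last point relies on the fact that $\DIS$ is total, so the reduction's inner query $K(p)$ must be a legitimate $f$-instance for \emph{every} $p$, not just those in some proper domain. The continuity (resp.\ computability) of $D$ then follows because $R$ is continuous (resp.\ computable) by Corollary~\ref{cor:recursion-theorem} and $K$ is continuous (resp.\ computable). Finally, since in both directions the ``input'' coordinate $\id$ in $H\langle\id,GK\rangle$ is either unused or used only to pass along $p$, replacing $\leqW$ by $\leqSW$ and $\leqFPW$ by $\leqSW^*$ changes nothing, giving the last sentence of the theorem.
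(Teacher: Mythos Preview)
Your plan for the ``$\Longrightarrow$'' direction is essentially the paper's: apply the parameterized recursion theorem (Corollary~\ref{cor:recursion-theorem}) to $F\langle q,r\rangle:=H\langle r,\Phi_qK(r)\rangle$ to obtain $R$ with $\U R(q)=H\langle R(q),\Phi_q KR(q)\rangle$, set $D:=K\circ R$, and derive a contradiction from $\Phi_q D(q)\in f(D(q))$. One point to make explicit: the contradiction is not obtained by ``taking $G=\Phi_q$'' (which need not be a global realizer of $f$) but by using that $f$ has \emph{some} realizer $F$ and modifying it at the single point $KR(q)$ to agree with $\Phi_q$ there---this is exactly where the hypothesis that problems have realizers is used.

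Your sketch of ``$\Longleftarrow$'', however, has a genuine gap. You correctly observe that from a discontinuity function $D$ one gets $GD(q)\neq\Phi_qD(q)=\U\langle q,D(q)\rangle$ for every realizer $G$, whereas what is needed is $GK(p)\neq\U(p)$. The fix you propose---replace $D$ by some $D'$ with $\U D'(p)=\Phi_pD'(p)$ via the recursion theorem, ``as in Proposition~\ref{prop:DIS-computably-discontinuous}''---does not close this gap: even granting such a $D'$ that is still a discontinuity function for $f$, you would only conclude $GD'(p)\neq\U(D'(p))$, not $GD'(p)\neq\U(p)$. (The $D'$ of Proposition~\ref{prop:DIS-computably-discontinuous} is a discontinuity function for $\DIS$, not for $f$; there is no mechanism that produces from the given $D$ such a $D'$ which simultaneously witnesses discontinuity of $f$.) The correct tool here is the \emph{smn-theorem}, not the recursion theorem: take a total continuous $R$ with $\Phi_{R(p)}(q)=\U(p)$ for all $q$ (so $\Phi_{R(p)}$ is the constant function with value $\U(p)$), and set $K:=D\circ R$. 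Then $DR(p)\in\dom(f)$, and since $D$ is a discontinuity function applied to the name $R(p)$ we get $\U(p)=\Phi_{R(p)}DR(p)\notin f(DR(p))$, hence $GK(p)\neq\U(p)$ for every realizer $G$. This is the paper's argument, and it yields $\DIS\leq_\mathrm{sW}^* f$ directly with trivial outer function, which also delivers the strong versions without further work.
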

\begin{proof}
Since $f\equivSW f^{\mathrm r}$ and $f$ is computably (effectively) discontinuous if and only if $f^\mathrm{r}$ is, 
it suffices to prove both statements for problems of type $f:\In\IN^\IN\mto\IN^\IN$.\medskip\\
(1) 
``$\TO$'' Let $\DIS\leqW f$ hold via computable $H,K:\In\IN^\IN\to\IN^\IN$, i.e.,
$H\langle p,FK(p)\rangle\not=\U(p)$ and $H\langle p,FK(p)\rangle$ defined for all $p\in\IN^\IN$ and every realizer $F$  of $f$.
In particular, $K$ is total and $K(p)\in\dom(f)$ for every $p\in\IN^\IN$.
By the parameterized recursion theorem from Corollary~\ref{cor:recursion-theorem} there is some computable $R:\IN^\IN\to\IN^\IN$ such that
\[H\langle R(q),\U\langle q,KR(q)\rangle\rangle=\U R(q)\] 
for all $q\in\IN^\IN$. Then $KR$ is a total computable function and $KR(q)\in\dom(f)$ for all $q\in\IN^\IN$.
Let $q\in\IN^\IN$ be such that $KR(q)\in\dom(\Phi_q)$ and let us assume that $\U\langle q,KR(q)\rangle=\Phi_qKR(q)\in fKR(q)$.
Since $f$ is realizable, there is a realizer $F$ of $f$ with $FKR(q)=\U\langle q,KR(q)\rangle$ and hence 
\[H\langle R(q),\U\langle q,KR(q)\rangle\rangle=H\langle R(q),FKR(q)\rangle\not=\U R(q)\] 
follows
by the choice of $H,K$ in contradiction to the choice of $R$.
Hence the assumption was wrong, i.e., $\Phi_qKR(q)\not\in fKR(q)$ and $D:=KR$ is a total computable discontinuity function for $f$.\\
``$\Longleftarrow$''
Let $f$ be computably discontinuous with a corresponding computable discontinuity function 
$D:\IN^\IN\to\IN^\IN$.
By the smn-theorem (Theorem~\ref{thm:smn}) there is a total computable $R:\IN^\IN\to\IN^\IN$ such that
$\U\langle R(p),q\rangle=\U(p)$ for all $p,q\in\IN^\IN$. 
Now for every realizer $F$ of $f$ and $p\in\IN^\IN$ we obtain $DR(p)\in\dom(f)$ and $FDR(p)\in fDR(p)$ and hence
\[\U(p)=\U\langle R(p),DR(p)\rangle=\Phi_{R(p)}DR(p)\not=FDR(p),\]
since $D$ is a discontinuity function.
Hence, $DR$ is a computable function that witnesses $\DIS\leqSW f$.\medskip\\
(2) The proof is literally the same as above, except that $H,K,R$ and $D$ are supposed to be continuous instead of computable.
\end{proof}

We note that the proof of ``$\Longleftarrow$'' shows that we could replace the strong versions of Weihrauch reducibility by an even stronger form  of reducibility that only uses the inner reduction function $K$ and no outer reduction function $H$.

Now we can ask the question whether every discontinuous problem is automatically computably discontinuous, i.e.,
whether the discontinuity problem $\DIS$ is the smallest discontinuous problem with respect to the computable version of Weihrauch reducibility.
This is clearly not the case, as the following example shows. In fact, we can infer this from the existence of immune sets.
We recall that a set $A\In\IN$ is called {\em immune} if it is infinite but does not contain any infinite c.e.\ subset.

\begin{example}
\label{ex:immune}
Let $A\In\IN$ and let $f_A:\In[0,1]\to\IR$ be defined by $\dom(f_A):=\{2^{-n}:n\in A\}\cup\{0\}$ and
\[f_A(x):=\left\{\begin{array}{ll}
1 & \mbox{if $x=0$}\\
0 & \mbox{otherwise}
\end{array}\right..\]
Then we obtain:
\begin{enumerate}
\item $f_A$ discontinuous $\iff$ $A$ infinite.
\item $f_A$ computably discontinuous $\iff$ $A$ contains an infinite c.e.\ subset.
\item $f_A$ discontinuous and not computably discontinuous $\iff$ $A$ is immune.
\end{enumerate}
\end{example}
\begin{proof}
(1) is obvious and (3) follows from (1) and (2). For the proof of (2), we note that given a computable discontinuity function $D:\IN^\IN\to\IN^\IN$ for $f_A$, we get an infinite c.e.\ subset of $A$ as follows.
Given $n\in\IN$ and $A_n=\{0,1,2,...,n\}$ we can compute a name $q$ of a realizer $\Phi_q$ of the continuous
function $f_{A_n}$, and $D(q)$ has to provide a name of an input $2^{-k}\in\IN$ to $f_{A}$ on which the realizer $\Phi_q$ fails. 
But this means that $k\in A\setminus A_n$.  The collection of all those $k$ for $n=0,1,2,...$ forms an infinite c.e.\ subset of $A$. On the other hand, given an infinite c.e.\ subset $B\In A$ and a potential realizer $\Phi_q$ of $f_A$,
we can evaluate $\Phi_q$ successively on names of the compact interval $[0,2^{-k}]$ with $k=0,1,2,...$. Then either $\Phi_q$ eventually produces an output that excludes $0$ or that excludes $1$ or neither of this ever happens. In the first case, we can find some $m\in B$ with
$m>k$ and produce a name of $2^{-m}$ as output of $D(q)$, since this is a value on which $\Phi_q$ fails to realize $f_A(2^{-m})=0$.
In the second case, we produce a name of $0$ as output of $D(q)$, since this is then a value on which $\Phi_q$ fails to realize $f_A(0)=1$. As long as neither the first nor the second case occurs, we produce the open interval $(-2^{-k},2^{-k+1})$ as approximation of an output of $D(q)$. If the first and the second case never occurs, then this produces a name of $0$ as output of $D(q)$, which is then not in the domain of $\Phi_q$. Altogether, $D$ is a computable discontinuity function for $f_A$.
\end{proof}

The following result shows that with respect to the computable version of Weih\-rauch reducibility 
we can even get an infinite descending chain 
of discontinuous problems that are not computably discontinuous.
If we restrict the discontinuity problem to the Turing cone $[p]:=\{q\in\IN^\IN:p\leqT q\}$, 
we obtain an effectively discontinuous problem $\DIS|_{[p]}$. With increasing complexity of $p$ these problems get weaker.
By $p^{(n)}$ we denote the $n$--th {\em Turing jump} of $p\in\IN^\IN$.

\begin{proposition}
\label{prop:decreasing-DIS-chain}
$\DIS|_{[p^{(n+1)}]}\lW\DIS|_{[p^{(n)}]}$ and $\DIS|_{[p^{(n)}]}$ is effectively discontinuous for all $p\in\IN^\IN$ and $n\in\IN$, but not computably so for $n\geq1$.
\end{proposition}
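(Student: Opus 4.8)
The statement bundles three claims — effective discontinuity of $\DIS|_{[p^{(n)}]}$ for all $p,n$; failure of computable discontinuity for $n\geq1$; and the strict chain $\DIS|_{[p^{(n+1)}]}\lW\DIS|_{[p^{(n)}]}$ — and the plan is to treat them separately, each being short. For the first claim I would actually prove the sharper statement $\DIS\leq_\mathrm{W}^*\DIS|_{[q]}$ for every $q\in\IN^\IN$ and then quote Theorem~\ref{thm:effective-discontinuity}(2). The construction: apply the smn-theorem (Theorem~\ref{thm:smn}) to the partial computable function $\langle a,b\rangle\mapsto\U(a)$ to obtain a total computable padding function $\sigma$ with $\U\langle\sigma(p),s\rangle=\U(p)$ for all $p,s$; then set $K(p):=\langle\sigma(p),q\rangle$ and let $H$ be the second projection of the pairing. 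The three things to check are that $K$ is continuous (it merely has the fixed parameter $q$ built in), that $q$ occurs as a subsequence of $K(p)$ so $K(p)\in[q]=\dom(\DIS|_{[q]})$, and that $\U K(p)=\U\langle\sigma(p),q\rangle=\U(p)$. Then for any realizer $G$ of $\DIS|_{[q]}$ and any $p$, $GK(p)$ is defined and $GK(p)\not=\U K(p)=\U(p)$, so $H\langle p,GK(p)\rangle=GK(p)\in\DIS(p)$; hence $\DIS\leq_\mathrm{W}^*\DIS|_{[q]}$. Taking $q=p^{(n)}$ finishes this part. (As a bonus, the reduction is strong and $\DIS|_{[q]}\leqSW\DIS$ trivially, so in fact $\DIS\equiv_\mathrm{W}^*\DIS|_{[q]}$.)

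For the failure of computable discontinuity when $n\geq1$, I would argue by contradiction using the definition directly: a computable discontinuity function $D$ for $\DIS|_{[p^{(n)}]}$ must satisfy $D(r)\in\dom(\DIS|_{[p^{(n)}]})=[p^{(n)}]$ for every $r$, hence $p^{(n)}\leqT D(r)$; evaluating at any computable $r$ (say $r=0^\IN$) makes $D(r)$ computable and forces $p^{(n)}$ to be computable, which is false for $n\geq1$ since $\emptyset'\leqT p'\leqT p^{(n)}$. Equivalently, via Theorem~\ref{thm:effective-discontinuity}(1), this is just $\DIS\nleqW\DIS|_{[p^{(n)}]}$. The easy half of the chain, $\DIS|_{[p^{(n+1)}]}\leqW\DIS|_{[p^{(n)}]}$, is immediate from $[p^{(n+1)}]\subseteq[p^{(n)}]$: take $K=\id$ restricted to $[p^{(n+1)}]$ and $H$ the second projection, noting that the two problems have identical solution sets $\{q:\U(r)\not=q\}$ at every $r\in[p^{(n+1)}]$, so any realizer of $\DIS|_{[p^{(n)}]}$ restricts to one of $\DIS|_{[p^{(n+1)}]}$.

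For strictness I would assume $\DIS|_{[p^{(n)}]}\leqW\DIS|_{[p^{(n+1)}]}$ via computable $H,K$ and derive a contradiction from the quantifier over realizers. Restricting any realizer of $\DIS$ to $[p^{(n+1)}]$ gives a realizer $G$ of $\DIS|_{[p^{(n+1)}]}$ with $\dom(G)=[p^{(n+1)}]$; then $p\mapsto H\langle p,GK(p)\rangle$ is a realizer of $\DIS|_{[p^{(n)}]}$, hence defined at $p^{(n)}$, which forces $K(p^{(n)})\in\dom(G)=[p^{(n+1)}]$, i.e.\ $p^{(n+1)}\leqT K(p^{(n)})$. But $K$ is computable, so $K(p^{(n)})\leqT p^{(n)}$, whence $p^{(n+1)}\leqT p^{(n)}$, contradicting $p^{(n)}\lT p^{(n+1)}$. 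Combining this $\nleqW$ with the easy reduction yields $\DIS|_{[p^{(n+1)}]}\lW\DIS|_{[p^{(n)}]}$.

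I expect the only step with real content to be the reduction $\DIS\leq_\mathrm{W}^*\DIS|_{[q]}$ in the first part: one must design the inner reduction $K$ so that its output always lands in the Turing cone $[q]$ — which unavoidably makes $K$ non-computable, harmless here because $\leq_\mathrm{W}^*$ only demands continuity — while at the same time preserving the value $\U K(p)=\U(p)$, so that a witness of discontinuity for $\DIS|_{[q]}$ transfers back to one for $\DIS$. Everything else is routine once one is careful about the ``for all realizers'' quantifier hidden in the definition of $\leqW$ and uses the strict monotonicity of the Turing jump.
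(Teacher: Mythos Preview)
Your proof is correct and follows essentially the same route as the paper. The only minor variation is in how you build the inner reduction $K$ for $\DIS\leq_\mathrm{W}^*\DIS|_{[q]}$: the paper pads the function-name component of an instance with redundant copies of $q$ (producing $R_q$ with $\Phi_{R_q(a)}=\Phi_a$ and $q\leqT R_q(a)$), whereas you route through a constant function via the smn-theorem and then pair with $q$; both constructions achieve $\U K(r)=\U(r)$ together with $K(r)\in[q]$, and the remaining two claims are handled identically via the observation that a computable $K$ cannot map an instance of Turing degree $p^{(n)}$ into $[p^{(n+1)}]$.
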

\begin{proof}
It is clear that $\DIS|_{[p^{(n+1)}]}\leqSW\DIS|_{[p^{(n)}]}$ holds for all $n\in\IN$, as the 
former problem is a restriction of the latter one. We have $\DIS|_{[p^{(n)}]}\not\leq_\mathrm{W}\DIS|_{[p^{(n+1)}]}$,
as an instance $q\equivT p^{(n)}$ of $\DIS|_{[p^{(n)}]}$ cannot be computably mapped to an
instance of $\DIS|_{[p^{(n+1)}]}$.

On the other hand, every function $\Phi_q$ has names of Turing degree above any $p\in\IN^\IN$, and we can even continuously
determine such names by adding redundant information in the code $q$ that encodes $p$ (for instance by repeating the $n$--th entry of $q$ exactly $p(n)$ times). 
In other words, for every $p\in\IN^\IN$ there is a continuous function $R_p:\IN^\IN\to\IN^\IN$ such that $\Phi_q=\Phi_{R_p(q)}$ and $p\leqT R_p(q)$.
Hence $\DIS\leq_\mathrm{W}^*\DIS|_{[p]}$ holds for every $p\in\IN^\IN$. That is, $\DIS|_{[p]}$ is effectively discontinuous by Theorem~\ref{thm:effective-discontinuity}.
\end{proof}

Hence, we even have an infinite descending chain of effectively discontinuous problems below $\DIS$
with respect to the computable version of Weihrauch reducibility.
In particular, an effectively discontinuous problem does not need to be computably discontinuous.
The next question is whether there are discontinuous problems, which are not even effectively discontinuous.
The following proposition provides a sufficient condition for such an example.

\begin{proposition}
\label{prop:embedding-effectively-discontinuous}
Let $\iota:\IN^\IN\to\IN^\IN$ be injective and $B:=\range(\iota)$. We consider the problem
$f:\In\IN^\IN\mto\IN^\IN$ with the domain $\dom(f):=B$ and
\[f(p):=\left\{\begin{array}{ll}
\IN^\IN\setminus\{\Phi_{\iota^{-1}(p)}(p)\} & \mbox{if $p\in\dom(\Phi_{\iota^{-1}(p)})$}\\
\IN^\IN & \mbox{otherwise}
\end{array}\right.\]
for all $p\in B$. Then $f$ is discontinuous. Moreover, if $f$ is effectively discontinuous, then there is a continuous
embedding $g:2^\IN\into B$.
\end{proposition}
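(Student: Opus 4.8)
The plan is to prove both assertions by diagonalization against the representation $\Phi$, much as in Proposition~\ref{prop:DIS-computably-discontinuous}. I would first establish that $f$ is discontinuous: suppose towards a contradiction that $f$ has a continuous realizer $F:\In\IN^\IN\to\IN^\IN$; since $\dom(f)=B$, this means $B\In\dom(F)$ and $F(p)\in f(p)$ for every $p\in B$. Because $\Phi$ contains an extension of every partial continuous function on Baire space, fix $q\in\IN^\IN$ with $F\In\Phi_q$ and evaluate at the instance $\iota(q)\in B$. Then $\iota^{-1}(\iota(q))=q$ and $\iota(q)\in B\In\dom(F)\In\dom(\Phi_q)$, so the first clause in the definition of $f$ applies: $f(\iota(q))=\IN^\IN\sm\{\Phi_q(\iota(q))\}$. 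But $F\In\Phi_q$ forces $F(\iota(q))=\Phi_q(\iota(q))$, the unique value excluded from $f(\iota(q))$, contradicting that $F$ realizes $f$. Hence $f$ is discontinuous.

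For the ``moreover'' part I would assume $f$ is effectively discontinuous, witnessed by a continuous discontinuity function $D:\IN^\IN\to\IN^\IN$, so that for every $q\in\IN^\IN$ we have $D(q)\in B$ and $\Phi_q D(q)\notin f(D(q))$, where by the convention of Definition~\ref{def:computable-discontinuity} the latter is also counted as satisfied when $\Phi_q D(q)$ is undefined. The key idea is to feed $D$ only names of \emph{total} functions, so that this convention becomes inoperative. Using the smn-theorem (Theorem~\ref{thm:smn}) on the computable map $\langle c,p\rangle\mapsto c$, fix a total computable $s:\IN^\IN\to\IN^\IN$ such that $\Phi_{s(c)}$ is the total function constantly equal to $c$, for every $c\in\IN^\IN$, and put $g_0:=D\circ s:\IN^\IN\to\IN^\IN$, which is continuous. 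Fix $c\in\IN^\IN$ and let $p:=g_0(c)=Ds(c)\in B$. Since $\Phi_{s(c)}(p)=c$ is defined, the requirement $\Phi_{s(c)}(p)\notin f(p)$ excludes the second clause of the definition of $f$ (which would demand $c\notin\IN^\IN$) and hence forces $p\in\dom(\Phi_{\iota^{-1}(p)})$ together with $\Phi_{\iota^{-1}(p)}(p)=c$. In particular $c=\Phi_{\iota^{-1}(g_0(c))}(g_0(c))$ is recovered from $g_0(c)$, so $g_0:\IN^\IN\to B$ is a continuous injection. Restricting $g_0$ to $2^\IN\In\IN^\IN$ gives a continuous injection $g:=g_0|_{2^\IN}:2^\IN\to B$; since $2^\IN$ is compact and $B\In\IN^\IN$ is Hausdorff, $g$ is a homeomorphism onto its image, i.e., the desired continuous embedding $2^\IN\into B$.

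The step I expect to be the real point is recognizing which inputs to hand to the discontinuity function $D$: names of constant (hence total) functions are exactly what is needed, since such a name simultaneously neutralizes the undefinedness convention, kills the trivial branch $f(p)=\IN^\IN$, pins $\Phi_{\iota^{-1}(p)}(p)$ down to the prescribed value $c$, and thereby makes $g_0$ injective at no extra cost. Everything else is routine: the function $s$ is a direct instance of the smn-theorem, continuity of $D\circ s$ is immediate, and passing from a continuous injection on the compact space $2^\IN$ to a topological embedding is standard.
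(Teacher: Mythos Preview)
Your proof is correct and follows essentially the same route as the paper: both arguments diagonalize at the point $\iota(q)$ to show discontinuity (the paper phrases this as ``$\iota$ is itself a discontinuity function for $f$'' rather than a direct contradiction, but it is the same diagonal), and both use the smn-theorem to produce names of total constant functions, feed those to $D$, and recover $c$ from $D(s(c))$ via $c=\Phi_{\iota^{-1}D(s(c))}(D(s(c)))$ to conclude injectivity before restricting to $2^\IN$. Your explicit remark that compactness of $2^\IN$ upgrades the continuous injection to a topological embedding is a nice addition the paper leaves implicit.
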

\begin{proof}
A function $D:\IN^\IN\to\IN^\IN$ is a discontinuity function for $f$ if and only if $\range(D)\In B$ and
\[D(q)\in\dom(\Phi_q)\TO\Phi_qD(q)=\Phi_{\iota^{-1}D(q)}D(q)\]
holds for all $q\in\IN^\IN$. Clearly, $D=\iota$ is a discontinuity function
for $f$, albeit not necessarily a continuous one. Nevertheless, this shows that $f$ is discontinuous.

Let us now assume that $D:\IN^\IN\to\IN^\IN$ is a continuous function with the above property. 
By the smn-theorem (Theorem~\ref{thm:smn}) there exists a computable total function $R:\IN^\IN\to\IN^\IN$ such that
$\Phi_{R(q)}(p)=q$ for all $p,q\in\IN^\IN$. Then we obtain 
\[q=\Phi_{R(q)}DR(q)=\Phi_{\iota^{-1}DR(q)}DR(q)\]
for all $q\in\IN^\IN$, which is only possible if $DR:\IN^\IN\to\IN^\IN$ is injective.
In particular, $DR|_{2^\IN}:2^\IN\to B$ is a continuous embedding of Cantor space into $B$.
\end{proof}

Hence, the existence of an injective map $\iota:\IN^\IN\to\IN^\IN$ with a range $B=\range(\iota)$
into which Cantor space cannot be continuously embedded, is sufficient to guarantee the existence
of a problem $f$ that is discontinuous, but not effectively discontinuous.

For this purpose it is sufficient to show that there exists a set $B\In\IN^\IN$, which violates the perfect subset property.
A set $B$ satisfies the {\em perfect subset property} if it is either countable or it contains a non-empty {\em perfect} subset, which
is a subset that is closed and has no isolated points. We recall that every $A\In\IN^\IN$ into which Cantor space can be continuously embedded contains a perfect subset.
It is a well-known fact that there are so-called {\em Bernstein sets}~\cite{Ber08a}, which violate the perfect subset property,
at least if we assume the axiom of choice~\cite[Exercise~8.24]{Kec95}.

\begin{fact}[Bernstein set]
\label{fact:Bernstein}
Assuming $\AC$ there exists a {\em Bernstein set} $B\In\IN^\IN$, which is a set $B$ such that $B$ as well as its complement $\IN^\IN\setminus B$ have non-empty intersection 
with every uncountable closed set $A\In\IN^\IN$. 
\end{fact}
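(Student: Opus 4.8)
The plan is to build $B$ by transfinite recursion of length $\mathfrak{c}$ (the cardinality of the continuum), using $\AC$ both to well-order the family of uncountable closed sets and to perform the required point selections. The two ingredients are purely cardinality-theoretic. First, there are at most $\mathfrak{c}$ closed subsets of $\IN^\IN$: every closed set is the branch space $[T]$ of a pruned tree $T\In\IN^*$, and since $\IN^*$ is countable there are only $2^{\aleph_0}=\mathfrak{c}$ such trees. Hence the family $\CC$ of all uncountable closed $A\In\IN^\IN$ satisfies $|\CC|\leq\mathfrak{c}$. Second, by the perfect set theorem for closed subsets of Polish spaces (the Cantor--Bendixson analysis of closed sets; cf.\ the perfect subset property discussed above), every uncountable closed $A\In\IN^\IN$ contains a nonempty perfect subset, and therefore $|A|=\mathfrak{c}$.

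Next, using $\AC$, fix a well-ordering of $\CC$ of least order type, giving an enumeration $\CC=\{A_\alpha:\alpha<\kappa\}$ with $\kappa=|\CC|\leq\mathfrak{c}$ a cardinal, and also fix a well-ordering of $\IN^\IN$ so that subsequent choices are canonical. By recursion on $\alpha<\kappa$ I would select two distinct points
\[b_\alpha,\;c_\alpha\;\in\;A_\alpha\setminus\{b_\beta:\beta<\alpha\}\setminus\{c_\beta:\beta<\alpha\}.\]
This is legitimate at every stage: the forbidden set $\{b_\beta,c_\beta:\beta<\alpha\}$ has cardinality at most $2\cdot|\alpha|<\mathfrak{c}$ (using that $\kappa$ is a cardinal, so $|\alpha|<\mathfrak c$), whereas $|A_\alpha|=\mathfrak{c}$, so $A_\alpha$ with the forbidden points removed still has size $\mathfrak{c}$, in particular at least two elements, from which I extract $b_\alpha$ and then $c_\alpha$ via the fixed well-ordering. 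Finally set $B:=\{b_\alpha:\alpha<\kappa\}$.

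It then remains to check that $B$ is a Bernstein set. Let $A\In\IN^\IN$ be uncountable and closed, so $A=A_\alpha$ for some $\alpha<\kappa$. Then $b_\alpha\in B\cap A$, so $B\cap A\neq\emptyset$. On the other hand $c_\alpha\in A$ and $c_\alpha\notin B$: by construction $c_\alpha\neq b_\alpha$ and $c_\alpha\neq b_\beta$ for $\beta<\alpha$, while for $\beta>\alpha$ the point $c_\alpha$ belongs to the forbidden set at stage $\beta$, so $c_\alpha\neq b_\beta$ there too; hence $c_\alpha\in(\IN^\IN\setminus B)\cap A$, and the complement of $B$ also meets $A$. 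The only genuine obstacle is the bookkeeping in the recursion --- guaranteeing that the forbidden set never exhausts $A_\alpha$ --- and this is precisely where the two cardinality facts (at most $\mathfrak{c}$ closed sets, each uncountable one of size exactly $\mathfrak{c}$) enter; the rest is a routine transfinite construction.
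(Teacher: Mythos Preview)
Your proof is correct and is exactly the standard transfinite-recursion construction the paper alludes to: the paper does not actually prove this Fact but simply cites \cite[Exercise~8.24]{Kec95} and remarks that the construction ``is by transfinite recursion'' and ``necessarily requires the axiom of choice.'' Your write-up supplies precisely those details, with the two cardinality inputs (at most $\mathfrak c$ closed sets, each uncountable closed set of size $\mathfrak c$ via Cantor--Bendixson) used in the expected places.
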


The construction of a Bernstein set $B\In\IR$ provided in \cite[Exercise~8.24]{Kec95} works equally well for $B\In\IN^\IN$
and is by transfinite recursion. 
This construction necessarily requires the axiom of choice.
Indeed, if we assume $\BP$, i.e., that every subset $B\In\IN^\IN$ has the Baire property,
then no Bernstein set can exist \cite{Kec95}.
From the point of view of computability theory Bernstein sets play a similar r\^{o}le as immune sets.
Actually, together with Proposition~\ref{prop:embedding-effectively-discontinuous} and Fact~\ref{fact:Bernstein}
we directly obtain the following conclusion.

\begin{corollary}
\label{cor:Bernstein}
If we assume $\AC$, then there exists a problem $f:\In\IN^\IN\mto\IN^\IN$
that is discontinuous, but not effectively discontinuous.
\end{corollary}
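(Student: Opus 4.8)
The plan is to feed a \emph{Bernstein set} into Proposition~\ref{prop:embedding-effectively-discontinuous}. By Fact~\ref{fact:Bernstein} the axiom of choice yields a Bernstein set $B\In\IN^\IN$, and only two properties of $B$ are needed. First, $B$ contains no non-empty perfect subset: a non-empty perfect set is an uncountable closed set, so if such a set were contained in $B$ it would be disjoint from $\IN^\IN\sm B$, contradicting the defining property of a Bernstein set. Second, $|B|=2^{\aleph_0}$; this is immediate from the usual transfinite construction of $B$, and can also be seen by decomposing $\IN^\IN$ into the $2^{\aleph_0}$ pairwise disjoint non-empty perfect sets $\{p\}\times\IN^\IN$ (under a homeomorphism $\IN^\IN\cong\IN^\IN\times\IN^\IN$), each of which must meet $B$. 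Since $|B|=|\IN^\IN|$, there is a bijection $\iota:\IN^\IN\to B$, which in particular is an injection $\IN^\IN\to\IN^\IN$ with $\range(\iota)=B$, so $\iota$ is an admissible choice in Proposition~\ref{prop:embedding-effectively-discontinuous}.

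Next I would invoke Proposition~\ref{prop:embedding-effectively-discontinuous} for this $\iota$, producing a problem $f:\In\IN^\IN\mto\IN^\IN$ with $\dom(f)=B$ that the proposition already tells us is discontinuous. It remains to rule out effective discontinuity. If $f$ were effectively discontinuous, then by the same proposition there would be a continuous embedding $g:2^\IN\into B$. Since $2^\IN$ is compact and $\IN^\IN$ is Hausdorff, $g$ is a homeomorphism onto its image, so $g(2^\IN)$ is a non-empty perfect subset of $B$ (alternatively, cite the fact recalled before Fact~\ref{fact:Bernstein} that Cantor-embeddability forces a perfect subset). This contradicts the first property of $B$. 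Hence $f$ is discontinuous but not effectively discontinuous, which is the assertion together with $\AC$.

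I do not anticipate a genuine obstacle: the statement is a short corollary of Proposition~\ref{prop:embedding-effectively-discontinuous} and Fact~\ref{fact:Bernstein}. The two points deserving a line of care are (i) that we need the Bernstein set to be the \emph{range} of an injection from $\IN^\IN$, which is why the cardinality equality $|B|=2^{\aleph_0}$ is required rather than mere uncountability, and (ii) the routine topological remark that a continuous injective image of Cantor space is closed, by compactness, and has no isolated points, hence is perfect. Both are standard, and the second is essentially already recorded in the surrounding text.
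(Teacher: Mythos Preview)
Your proposal is correct and follows exactly the route the paper takes: the paper simply states that the corollary follows directly from Proposition~\ref{prop:embedding-effectively-discontinuous} together with Fact~\ref{fact:Bernstein}, and you have filled in precisely those details. If anything, your treatment is slightly more careful than the paper's terse justification, since you explicitly verify $|B|=2^{\aleph_0}$ (needed to get an injection $\iota:\IN^\IN\to\IN^\IN$ with range $B$), a point the paper leaves implicit.
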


Similarly as in the case of Proposition~\ref{prop:decreasing-DIS-chain}, one could object that the problem $f$ constructed here
is not genuinely less discontinuous than $\DIS$, but only ``simpler'' as instances are artificially made harder.
In other words, the problem $f$ considered here has a very complicated domain, which in the case of our proof of Corollary~\ref{cor:Bernstein}
is a Bernstein set.

With the next result we dispel this objection by constructing a total problem $f:\IN^\IN\mto\IN^\IN$
that is discontinuous, but not effectively so. Again, the construction is based on the axiom of choice,
and we directly perform a transfinite recursion.
We can arrange this construction even such that $f$ is parallelizable.
We recall that $f:\In\IN^\IN\mto\IN^\IN$ is {\em parallelizable} if and only if  $f\equivW\langle f\rangle$,
where $\langle f\rangle$ is defined by $\langle f\rangle:\In\IN^\IN\mto\IN^\IN,\langle p_0,p_1,...\rangle\mapsto\langle f(p_0),f(p_1),...\rangle$. 
By $|X|$ we denote the {\em cardinality} of a set $X$.

\begin{theorem}
\label{thm:AC}
Assuming $\AC$, there exists a total parallelizable $f:\IN^\IN\mto\IN^\IN$ that is discontinuous, but not 
effectively discontinuous.
\end{theorem}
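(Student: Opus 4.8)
The plan is to realize $f$ as a parallelization $f:=\langle g\rangle$ of a total problem $g:\IN^\IN\mto\IN^\IN$ that is built by transfinite recursion of length $\kappa:=|\IN^\IN|$. Writing $f$ in this form settles most of what is required almost for free: $f$ is total because $g$ is; $f$ has a realizer because, using $\AC$, we may choose $r(p)\in g(p)$ for all $p$ and parallelize $r$; $f$ is parallelizable because parallelization is idempotent up to $\equivW$, so $\langle\langle g\rangle\rangle\equivW\langle g\rangle$; and $f$ is discontinuous as soon as $g$ is, since $g\leqW\langle g\rangle$ and discontinuity is downward closed under $\leqW$ (a continuous reduction applied to a continuous realizer is again a continuous realizer). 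So the whole task is to construct a \emph{total} $g:\IN^\IN\mto\IN^\IN$ such that (a) $g$ has no continuous realizer, and (b) $f=\langle g\rangle$ is not effectively discontinuous.

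By $\AC$, $\kappa$ is an aleph; fix a well-ordering of order type $\kappa$ that presents every $e\in\IN^\IN$ as a requirement $P_e$ and every $d\in\IN^\IN$ as a requirement $N_d$. Throughout the recursion we maintain, for each $p\in\IN^\IN$, disjoint sets $g^+(p),g^-(p)\In\IN^\IN$ of values that must end up inside, resp.\ outside, $g(p)$. Each stage makes at most countably many commitments, so before stage $\alpha<\kappa$ fewer than $\kappa$ points carry any commitment. At stage $P_e$: if $\Phi_e:\IN^\IN\to\IN^\IN$ is total, choose a point $x_e$ not yet carrying any commitment (possible since $|\IN^\IN|=\kappa$ exceeds the number of committed points) and put $\Phi_e(x_e)$ into $g^-(x_e)$. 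As $g$ will be total, any continuous realizer of $g$ is a total continuous map $\IN^\IN\to\IN^\IN$, hence of the form $\Phi_e$, and would have to satisfy $\Phi_e(x_e)\in g(x_e)$; the requirements $P_e$ make this impossible, so $g$, and therefore $f$, is discontinuous.

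Stage $N_d$ carries the real content. We must ensure that $\Phi_d$, if it is total and continuous, is not a discontinuity function of $f=\langle g\rangle$; that is, we must exhibit some $q$ for which $\Phi_q\Phi_d(q)$ is defined and lies in $f\Phi_d(q)$. By the smn-theorem (Theorem~\ref{thm:smn}) fix a (computable, hence injective) map $v\mapsto e_v$ with $\Phi_{e_v}$ equal to the constant function $x\mapsto\langle v,v,v,\dots\rangle$. Writing $\Phi_d(e_v)=\langle y_0(v),y_1(v),\dots\rangle$ (legitimate, as $\Phi_d$ is total), we have $\Phi_{e_v}\Phi_d(e_v)=\langle v,v,\dots\rangle$, and this element lies in $f\Phi_d(e_v)=\langle g(y_0(v)),g(y_1(v)),\dots\rangle$ precisely when $v\in g(y_i(v))$ for every $i\in\IN$. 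Hence it suffices to find a single $v$ that can be consistently added to $g^+(y_i(v))$ for all $i$, and to add it; this defeats $\Phi_d$ at $q:=e_v$. The only obstruction is that $v\in g^-(y_i(v))$ for some $i$; but then $v$ belongs to the union $E:=\bigcup_p g^-(p)$ of all negative commitments made so far, and $|E|<\kappa$, so any $v\in\IN^\IN\sm E$ works. This is the one subtle point of the construction: the forbidden set at $y_i(v)$ moves with $v$, yet any failure is absorbed by the single global set $E$, which is too small to exhaust Baire space. Finally put $g(p):=g^+(p)$ whenever this is non-empty and $g(p):=\IN^\IN\sm g^-(p)$ otherwise; since $g^+(p)$ and $g^-(p)$ are disjoint with $|g^-(p)|<\kappa$, we get $g(p)\neq\emptyset$ and all commitments honoured. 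Ranging over all $d$, no total continuous function is a discontinuity function of $f$, so $f=\langle g\rangle$ is not effectively discontinuous, which completes the construction.
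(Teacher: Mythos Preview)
Your proof is correct and follows essentially the same approach as the paper: both build an auxiliary total problem by transfinite recursion of length $2^{\aleph_0}$, diagonalize against each potential continuous realizer at a fresh point, and defeat each potential continuous discontinuity function $\Phi_d$ of the parallelization by choosing a \emph{constant} function $\Phi_q$ (your $\Phi_{e_v}$) whose constant value avoids the $<2^{\aleph_0}$ many forbidden values accumulated so far; the parallelization of the auxiliary problem is then the desired example. Your organization into separate $P_e/N_d$ stages with bookkeeping sets $g^+(p),g^-(p)$ is a cosmetic variant of the paper's single-stage construction with lists $N_\xi,P_\xi$. One terminological slip: you write that ``discontinuity is downward closed under $\leqW$'', but you mean upward closed (equivalently, continuity is downward closed); your parenthetical explanation gives the correct direction, so the argument is unaffected.
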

\begin{proof}
It suffices to construct a total $f:\IN^\IN\mto\IN^\IN$ that is discontinuous and such that
$\langle f\rangle:\IN^\IN\mto\IN^\IN,\langle p_0,p_1,...\rangle\mapsto\langle f(p_0),f(p_1),...\rangle$ 
is not effectively discontinuous. Then $\langle f\rangle$ has the desired properties, since
it is discontinuous and parallelizable.   
We note that the set $\C(\IN^\IN,\IN^\IN)$ of total continuous functions $g:\IN^\IN\to\IN^\IN$ has continuum cardinality.
Hence, by the axiom of choice there is a transfinite enumeration $(r_\xi)_{\xi<2^{\aleph_0}}$ of $r_\xi\in\IN^\IN$
such that $\C(\IN^\IN,\IN^\IN)=\{\Phi_{r_\xi}:\xi<2^{\aleph_0}\}$.
A problem $f:\IN^\IN\mto\IN^\IN$ is discontinuous and $\langle f\rangle$ is not effectively discontinuous if 
the following two requirements are satisfied:
\begin{enumerate}
\item $(\forall \xi<2^{\aleph_0})(\exists q\in \IN^\IN)\left(\Phi_q\Phi_{r_\xi}(q)\in\langle f\rangle(\Phi_{r_\xi}(q))\mbox{ and }\Phi_q\mbox{ total}\right)$,
\item $(\forall \xi<2^{\aleph_0})(\exists p\in\IN^\IN)\;\Phi_{r_\xi}(p)\not\in f(p)$.
\end{enumerate}
The second condition guarantees that $f$ has no continuous realizer $\Phi_{r_\xi}$, the
first condition guarantees that no $\Phi_{r_\xi}$ is a discontinuity function for $\langle f\rangle$.
We build $f$ by transfinite recursion. 
For this purpose we construct two increasing sequences $(N_\xi)_{\xi<2^{\aleph_0}}$ and $(P_\xi)_{\xi<2^{\aleph_0}}$ 
of sets $N_\xi,P_\xi\In\IN^\IN\times\IN^\IN$. The ``negative list'' $N_\xi$ ensures that $f$ is discontinuous and
the ``positive list'' $P_\xi$ ensures that $\langle f\rangle$ is not effectively discontinuous.
The construction will be such that $|N_\xi|=|\xi|$ and $|P_\xi|\leq|\xi|\cdot\aleph_0$.
During the construction we frequently use the axiom of choice without further mention.
The transfinite recursion goes as follows.
We start with $N_0:=P_0:=\emptyset$.
 For each $0<\xi<2^{\aleph_0}$ we first choose
some $p\in\IN^\IN$ that does not appear as a first component in $M:=\bigcup_{\lambda<\xi}(P_\lambda\cup N_\lambda)$
and we define $N_\xi:=\{(p,\Phi_{r_\xi}(p))\}\cup\bigcup_{\lambda<\xi}N_\lambda$.
Such a choice is possible, as $|M|\leq |\xi|+|\xi|\cdot\aleph_0=\max(|\xi|,\aleph_0)<2^{\aleph_0}$ using
the usual rules of cardinal arithmetic \cite[Corollary~3.7.8]{Dev93}.
Secondly, we also choose some $q\in\IN^\IN$ such that $\Phi_q$ is total and such that no $\pi_i\Phi_q\Phi_{r_\xi}(q)$ with $i\in\IN$ 
appears in any second component of $N_\xi$.
Here $\pi_i:\IN^\IN\to\IN^\IN,\langle p_0,p_1,...\rangle\mapsto p_i$ denotes the projection
on the $i$--th component.
Such a choice is possible, as $|N_\xi|\leq|\xi|<2^{\aleph_0}$ and $\Phi_q$ for $q\in\IN^\IN$
includes all the constant total functions.
We define
$P_\xi:=\{(\pi_i\Phi_{r_\xi}(q),\pi_i\Phi_q\Phi_{r_\xi}(q)):i\in\IN\}\cup\bigcup_{\lambda<\xi}P_\lambda$.
 The construction guarantees that $|N_\xi|=|\xi|$ and $|P_\xi|\leq|\xi|\cdot\aleph_0$.
This ends the transfinite recursion.
We now define $P:=\bigcup_{\xi<2^{\aleph_0}}P_\xi$ and $N:=\bigcup_{\xi<2^{\aleph_0}}N_\xi$.
The construction guarantees that $P$ is the graph of a partial problem $f:\In\IN^\IN\mto\IN^\IN$
and $N$ is the graph of a single-valued function $g:\In\IN^\IN\to\IN^\IN$ with $N\cap P=\emptyset$.
Actually, $P$ guarantees that $f$ satisfies condition (1) and $N$ guarantees that $f$ satisfies condition (2).
We still need to extend $P$ to the graph of a total problem $f:\IN^\IN\mto\IN^\IN$ without affecting the conditions (1) and (2). 
For this purpose, we choose for every
$p\in\IN^\IN$ that does not yet appear in a first component of $P$ some $s\in\IN^\IN$ such that
$(p,s)\not\in N$ and we add $(p,s)$ to $P$. This is possible since $N$ is the graph of a single-valued function $g$.
Now $P$ is the graph of a suitable total problem $f:\IN^\IN\mto\IN^\IN$.
\end{proof}

In Corollary~\ref{cor:determinacy} we will see that without the axiom of choice ($\AC$) we cannot construct
discontinuous problems that are not effectively discontinuous.
We close with mentioning that the example in Theorem~\ref{thm:AC} cannot be strengthened to
a single-valued function.

\begin{proposition}[Discontinuous functions]
\label{prop:discontinuous}
Every single-valued ${f:\In\IN^\IN\to\IN^\IN}$ is either continuous or effectively discontinuous.
\end{proposition}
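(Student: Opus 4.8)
The plan is: if the single-valued $f:\In\IN^\IN\to\IN^\IN$ is discontinuous, exhibit a continuous discontinuity function for it in the sense of Definition~\ref{def:computable-discontinuity}; producing a \emph{continuous} (rather than computable) witness is exactly what ``effectively discontinuous'' requires. Since $f$ is single-valued, a realizer of $f$ is simply a continuous partial function extending $f$, and $\Phi$ contains an extension of every continuous partial function on Baire space; hence ``$f$ is discontinuous'' is equivalent to ``$f$ fails to be topologically continuous on $\dom(f)$''. So we may fix a point $p_0\in\dom(f)$ at which $f$ is not continuous together with a number $n_0\in\IN$ such that for every $k\in\IN$ there is some $p\in\dom(f)$ with $p\restriction k=p_0\restriction k$ but $f(p)\restriction n_0\not=f(p_0)\restriction n_0$. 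Using countable choice (available over $\ZF+\DC$) we fix such a witness $p_k$ for every $k$, so that $p_k\restriction k=p_0\restriction k$.

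On input $q$ the function $D$ simulates the computation of $\Phi_q$ on the fixed argument $p_0$, feeding $\Phi_q$ the symbols of $p_0$ it requests and, as long as no commitment has been made, using those same symbols as the current prefix of $D(q)$. Let $w$ denote the finite output of $\Phi_q$ produced so far. The construction commits according to the first of the following two mutually exclusive events that occurs. If $w$ ceases to be a prefix of $f(p_0)$, commit $D(q):=p_0$ and output the remaining symbols of $p_0$; then $D(q)\in\dom(f)$ and $\Phi_q D(q)$, if defined, extends $w$ and therefore differs from $f(p_0)=fD(q)$. If instead $|w|\geq n_0$ — in which case, the previous event not having occurred, $w$ is a prefix of $f(p_0)$, so $w\restriction n_0=f(p_0)\restriction n_0$ — then, letting $s$ be the number of symbols of $p_0$ consumed so far, commit $D(q):=p_s$ and output the remaining symbols of $p_s$; this is consistent because $p_s\restriction s=p_0\restriction s$ is exactly what has been output, the computation of $\Phi_q$ on $p_s$ proceeds identically up to producing $w$ (it has read a common prefix of $p_s$ and $p_0$), so $\Phi_q D(q)$, if defined, extends $w$ and hence $\Phi_q D(q)\restriction n_0=f(p_0)\restriction n_0\not=f(p_s)\restriction n_0=fD(q)\restriction n_0$, while $D(q)=p_s\in\dom(f)$. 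If neither event ever occurs, then at every stage $w$ is a prefix of $f(p_0)$ of length $<n_0$; being monotone and of bounded length it is eventually constant, so $\Phi_q(p_0)$ is undefined, and we set $D(q):=p_0\in\dom(f)$, for which the condition of Definition~\ref{def:computable-discontinuity} holds by the undefinedness convention.

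It remains to observe that $D$ is a discontinuity function for $f$, which was verified case by case above — this is precisely where single-valuedness enters, since it lets us compare the output of $\Phi_q$ with the single values $f(p_0)$ and $f(p_s)$ — and that $D$ is continuous: any finite prefix of $D(q)$ is fixed after finitely many simulation steps, which depend on only finitely much of $q$ (and on the fixed data $p_0,n_0,(p_k)_k$), and the two possible commitments are compatible with whatever prefix has already been produced because $p_k\restriction m=p_0\restriction m$ for all $m\leq k$. The function $D$ is in general not computable, the parameters being arbitrary reals, and this is unavoidable, as Example~\ref{ex:immune} exhibits single-valued functions that are discontinuous but not computably discontinuous. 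The one delicate point is the bookkeeping that keeps the tentative output $p_0$ a common prefix of $p_0$ and of the eventual value $p_s$; granting that, the argument is a straightforward continuous diagonalization, parallel in spirit to the ``$\Longleftarrow$'' direction of Theorem~\ref{thm:effective-discontinuity} but carried out directly.
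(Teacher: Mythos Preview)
Your proof is correct and takes a genuinely different route from the paper's. The paper's argument is a two-line factoring through $\LPO$: it quotes the well-known fact (from \cite[Lemma~8.2.6]{Wei00}) that every discontinuous single-valued $f:\In\IN^\IN\to\IN^\IN$ satisfies $\LPO\leq_\mathrm{W}^* f$, observes that $\DIS\leqSW\LPO$ (decide whether $\U(p)=000\ldots$ and output $111\ldots$ or $000\ldots$ accordingly), and invokes Theorem~\ref{thm:effective-discontinuity}. Your argument instead builds the discontinuity function $D$ by hand: the choice of the discontinuity point $p_0$, the modulus $n_0$, and the witnesses $p_k$ is exactly the data underlying the proof of $\LPO\leq_\mathrm{W}^* f$, and your simulation of $\Phi_q$ on $p_0$, committing to $p_0$ or to some $p_s$ depending on whether $\Phi_q$'s partial output diverges from $f(p_0)$ or agrees with it on $n_0$ symbols, plays the role of the reduction $\DIS\leqSW\LPO$. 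So your construction is essentially an unfolding of the composition the paper cites. The paper's version is more modular and short; yours is self-contained, makes the mechanism explicit, and avoids appealing to an external reference. One small wording issue: a realizer of $f$ is any (not necessarily continuous) extension of $f$; what you mean is that $f$ is continuous iff it has a continuous extension in the $\Phi$-sense, which is indeed equivalent to topological continuity on $\dom(f)$, as you use.
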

\begin{proof}
By $\LPO:\IN^\IN\to\{0,1\}$ we denote the characteristic function of $\{000...\}$. 
It is well-known that every discontinuous $f:\In\IN^\IN\to\IN^\IN$ satisfies $\LPO\leq_\mathrm{W}^*f$~\cite[Lemma~8.2.6]{Wei00}. It is easy to see that $\DIS\leqSW\LPO$ holds (given an input $p\in\IN^\IN$ for the universal function $\U$,
use $\LPO$ to determine whether $\U(p)=000...$ or not).
The statement now follows with Theorem~\ref{thm:effective-discontinuity}.
\end{proof}

This result can be extended to functions $f:X\to Y$ on admissibly represented spaces $X,Y$ (which can be proved
similarly as \cite[Theorem~4.13]{Pau10a}).

\section{A Game Characterization}
\label{sec:games}

In this section we want to characterize continuity and effective discontinuity using games. 
It is common in descriptive set theory and computability theory to use games to characterize reducibilities and other properties \cite{Kec95,Mos09,Soa16} (see \cite{Tel87,Myc92} for historical surveys).

Wadge~\cite{Wad83} introduced games on subsets $A,B\In\IN^\IN$ to characterize the reducibility that is named after him.
Nobrega and Pauly~\cite{Nob18,NP19} have used a modification of Wadge games for problems $f:\In X\mto Y$ in order to characterize lower cones in the Weihrauch lattice.
We consider similar generalized\footnote{We warn the reader that the extension of the notion of a Wadge game from sets $A,B\In\IN^\IN$
to problems does not automatically mean that determinacy properties carry over.}  versions of Wadge and Lipschitz games, defined as follows. We recall that by ${f^\r:\In\IN^\IN\to\IN^\IN}$ we denote the realizer version
of a problem $f:\In X\mto Y$ (see section~\ref{sec:effective-discontinuity}).

\begin{definition}[Wadge game of problems]
Let $f:\In\IN^\IN\mto\IN^\IN$ be a problem. In a {\em Wadge game} $f$
two players I and II consecutively play words, with player I starting:
\begin{itemize}
\item Player I: $x_0\quad  x_1\quad x_2 \quad...\quad=:x$,
\item Player II: $\quad y_0\quad  y_1\quad y_2 \quad...\quad=:y$,
\end{itemize}
with $x_i,y_i\in\IN^*$. 
The concatenated sequences $(x,y)\in(\IN^\IN\cup\IN^*)^2$ are called a {\em run} of the game $f$.
We say that Player II {\em wins} the run $(x,y)$ of $f$, if
$(x,y)\in\graph(f)$ or $x\not\in\dom(f)$. Otherwise Player I {\em wins}. 
A Wadge game is called a {\em Lipschitz game} if $x_i,y_i\in\IN$.
A {\em Wadge} or {\em Lipschitz game} of a general problem $f:\In X\mto Y$ is understood to be the corresponding game of 
the realizer version $f^\r:\In\IN^\IN\mto\IN^\IN$.
\end{definition}

We allow both players to play arbitrary words, including the empty word.
One can see that player I does not take any advantage of playing words and he could be restricted to natural numbers in a Wadge game, without loss of generality.
Likewise, player II does not take any advantage of playing arbitrary words, it would suffice to allow numbers and the empty word, where the empty word essentially corresponds to skipping the corresponding move.
This shows that our notion of a Wadge game for problems corresponds to the one of Nobrega and Pauly~\cite{Nob18,NP19}.
For simplicity we have allowed arbitrary words for both players.
This does not only lead to a more symmetric definition, but it also simplifies the proof of Theorem~\ref{thm:Wadge-game} below.
As usual we define winning strategies for games to be word functions that determine moves for one player depending on the
moves of the other player. 

\begin{definition}[Winning strategy]
Let $f:\In X\mto Y$ be a problem and let ${\sigma:\IN^{**}\to\IN^*}$ be a function.
We consider the Wadge game $f$.
\begin{enumerate}
\item
$\sigma$ is called a {\em winning strategy} for Player II in the game $f$, if Player II 
wins every run of $f$ with her moves being
determined by 
\[y_i:=\sigma(x_0,...,x_i),\] 
while Player I plays $x_0,x_1,...\in\IN^*$. 
\item
$\sigma$ is called a {\em winning strategy} for Player I in the game $f$, if Player I
wins every run of $f$
with his moves being determined by 
\[x_i:=\sigma(y_0,...,y_{i-1}),\] 
while Player II plays $y_0,y_1,...\in\IN^*$. 
\end{enumerate}
Winning strategies for Lipschitz games are defined analogously with functions of type $\sigma:\IN^*\to\IN$.
\end{definition}

Now our main observation on Wadge games is that winning of player II characterizes continuity of the problem
and winning of player I effective discontinuity. Nobrega and Pauly have proved a general version of the first observation for lower cones in the Weihrauch lattice~\cite[Theorem~3.3]{NP19}. 
We use some bijective standard numbering $\w:\IN\to\IN^*$ and we use the notation $\overline{v}:=\w^{-1}(v)$ for all $v\in\IN^*$.

\begin{theorem}[Wadge games]
\label{thm:Wadge-game}
We consider the Wadge game of a given problem ${f:\In X\mto Y}$.
Then the following hold:
\begin{enumerate}
\item $f$ is continuous $\iff$ Player II has a winning strategy for $f$,
\item $f$ is effectively discontinuous $\iff$ Player I has a winning strategy for $f$.
\end{enumerate}
\end{theorem}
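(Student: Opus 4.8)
The plan is to reduce this theorem to Theorem~\ref{thm:effective-discontinuity}, which already tells us that effective discontinuity of $f$ is equivalent to $\DIS\leq_{\mathrm W}^*f$, and that continuity is equivalent to $f\leq_{\mathrm W}^*\id$. Since a Wadge game for a general problem is by definition the Wadge game of the realizer version $f^{\mathrm r}$, and since $f$ is continuous (resp.\ effectively discontinuous) if and only if $f^{\mathrm r}$ is, it suffices to prove both equivalences for problems $f:\In\IN^\IN\mto\IN^\IN$ directly. So throughout I would work with such an $f$.

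\emph{Part (1).} For the forward direction, suppose $f$ has a continuous realizer $F$ approximated by a monotone word function. Then Player~II reads off Player~I's moves $x_0,x_1,\dots$, maintains the finite prefix $x_0\cdots x_i$ of the input, feeds it to the approximating word function, and outputs whatever new output symbols this produces as $y_i$; if $x=x_0x_1\cdots\in\dom(f)$ this produces $F(x)\in f(x)$, and if $x\notin\dom(f)$ Player~II wins by default. Conversely, a winning strategy $\sigma$ for Player~II is essentially a monotone word function: given any $x\in\dom(f)$, simulate the run where Player~I plays $x$ digit by digit; the outputs $y_i=\sigma(x_0,\dots,x_i)$ concatenate to some $y$, and since II wins we must have $(x,y)\in\graph(f)$, in particular $y\in\IN^\IN$, so this defines a continuous realizer of $f$ (with the standard convention for handling inconsistencies, namely that a run producing only a finite string on a name in $\dom(f)$ cannot occur). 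This is the Nobrega--Pauly direction.

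\emph{Part (2).} This is where the universal function and the numbering $\w$ enter, and it is the main obstacle. For ``$\Longrightarrow$'': suppose $D:\IN^\IN\to\IN^\IN$ is a continuous discontinuity function for $f$, so for every $q$ we have $D(q)\in\dom(f)$ and $\Phi_qD(q)\notin fD(q)$ (with the undefined-means-true convention). I want Player~I's strategy to construct, from Player~II's moves $y_0,y_1,\dots$, a name $q$ of the continuous function that Player~II is implicitly playing — namely $\Phi_q$ should be the word function sending the prefix $\w_{\overline{y_0\cdots y_{i-1}}}$ \dots\ (this is exactly why the notation $\overline{v}=\w^{-1}(v)$ was set up) — and then play $x=D(q)$. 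Since $D$ is continuous, Player~I can produce finite pieces $x_i$ of $D(q)$ from finite pieces $q$, and $q$ is in turn built continuously from $y_0,\dots,y_{i-1}$; some care is needed because $q$ depends on \emph{all} of Player~II's future moves, so one runs the construction as an interleaved limit and uses monotonicity of $D$'s approximation. The point is that whatever Player~II plays, her outputs form $\Phi_q(x)$ for this $q$ (or a proper prefix of it), and $D$ being a discontinuity function forces $(x,y)\notin\graph(f)$ while $x=D(q)\in\dom(f)$ — so Player~I wins. For ``$\Longleftarrow$'': given a winning strategy $\sigma$ for Player~I, define $D(q)$ to be the play Player~I makes against the ``Player~II strategy that plays $\Phi_q$'', i.e.\ simulate the run where Player~II's $i$-th move is the block of output symbols that $\Phi_q$ emits on the current input prefix $x_0\cdots x_i$ produced by $\sigma$. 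Then $D$ is continuous (it is computed by interleaving $\sigma$ with the universal function $\U$), $D(q)=x\in\dom(f)$ since I won, and the outputs $y$ are exactly $\Phi_qD(q)$ (or a prefix thereof), with $(x,y)\notin\graph(f)$; hence $\Phi_qD(q)\notin fD(q)$, so $D$ is a discontinuity function and $f$ is effectively discontinuous. (If one wants the computable versions, restrict $\sigma$ to be a computable word function.)

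\emph{The hard part.} The genuine technical obstacle is the bookkeeping in part (2): translating between a strategy (a word function on histories) and a $\Phi$-name of a continuous function, in both directions, while respecting that players may play the empty word (so moves need not advance the input/output), and handling the case where the run is finite on one side. The convention that $\delta_Y\Phi_qD(q)\notin f\delta_XD(q)$ counts as satisfied when the left side is undefined is exactly what makes the ``finite run'' case go through, so I would flag that explicitly. Everything else is a routine unwinding of the approximating-word-function picture of continuity together with the smn-theorem used to pass from ``the function Player~II is playing'' to an actual name $q$ feedable to $\U$.
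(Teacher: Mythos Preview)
Your proposal is correct and follows essentially the same approach as the paper's proof. Both reduce to $f:\In\IN^\IN\mto\IN^\IN$, handle part~(1) via the standard monotone-word-function picture of continuity, and for part~(2) build the name of ``the function Player~II is playing'' by directly listing graph pairs $(\text{input prefix},\text{output prefix})$ from the run, interleaving this with an approximation of $D$ so that Player~I's play converges to $D$ of that very name. One small remark: the smn-theorem is not actually needed in part~(2) --- the paper constructs the name $p$ directly as $\langle\overline{w_0},\overline{v_0}\rangle\langle\overline{w_0w_1},\overline{v_0v_1}\rangle\ldots$, and this is already a valid $\Phi$-name by the definition of $\Phi$; your bookkeeping concern about the self-referential dependence (the name depends on Player~I's own moves as well as Player~II's) is exactly right and is resolved purely by continuity of $D$.
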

\begin{proof}
Since the Wadge game of $f$ is the Wadge game of $f^\r$ and $f^\r\equivSW f$, it suffices by 
Theorem~\ref{thm:dichotomy} to consider problems of type $f:\In\IN^\IN\mto\IN^\IN$.\smallskip\\
(1) If $f$ is continuous, then $f$ has a continuous realizer $F:\In\IN^\IN\to\IN^\IN$, which is approximated
by a monotone function $h:\IN^*\to\IN^*$ in the sense that $F(p)=\sup_{w\prefix p}h(w)$ for all $p\in\dom(F)\supseteq\dom(f)$. Given the moves $w_0,w_1,...\in\IN^*$ of Player~I, we can
inductively define the moves $v_i\in\IN^*$ by $v_0...v_i:=h(w_0...w_i)$ for all $i\in\IN$, since $h$ is monotone.
Then $\sigma(\varepsilon):=\varepsilon$ and $\sigma(w_0,...,w_i):=v_i$ provides a winning strategy
$\sigma:\IN^{**}\to\IN^*$ for Player~II.
This is because if $r:=w_0w_1...\in\dom(f)$, then $F(r)=v_0v_1...\in f(r)$.\smallskip\\
Vice versa, let $\sigma:\IN^{**}\to\IN^*$ be a winning strategy for Player~II.
Then we can define $h:\IN^*\to\IN^*$ by $h(\varepsilon):=\varepsilon$ and $h(a_0...a_i):=v_0...v_i$ for all $a_0,a_1,...\in\IN$,
where we inductively choose  $v_i:=\sigma(a_0,...,a_i)$.
Let $F:\In\IN^\IN\to\IN^\IN$ be given by $F(p):=\sup_{w\prefix p}h(w)$.
Given an input $r:=a_0a_1...\in\dom(f)$, we obtain $F(r)=v_0v_1...$ such that
$(r,F(r))\in\graph(f)$, since $\sigma$ is a winning strategy for Player II. 
Hence $F$ is a continuous realizer for $f$.\medskip\\
(2) If $f$ is effectively discontinuous, then there is a continuous $D:\IN^\IN\to\IN^\IN$
that witnesses the discontinuity of $f$ in the sense that $D(p)\in\dom(f)$ and
$\Phi_pD(p)\not\in fD(p)$ for all $p\in\IN^\IN$.
Let $h:\IN^*\to\IN^*$ be a monotone function that approximates $D$, in the sense
that $D(p)=\sup_{w\prefix p}h(w)$ for all $p\in\IN^\IN$.
Given the moves $v_0,v_1,...\in\IN^*$ of Player II, we can inductively
define the moves $w_i$ by $w_0:=h(\varepsilon)$ and  $w_0...w_i:=h(\langle\overline{w_0},\overline{v_0}\rangle...\langle\overline{w_0...w_{i-1}},\overline{v_0...v_{i-1}}\rangle)$. 
We note that $r:=w_0w_1...$ is an infinite sequence $r\in\dom(f)$,
since $D$ is total and $\range(D)\In\dom(f)$. And $p:=\langle\overline{w_0},\overline{v_0}\rangle\langle\overline{w_0w_{1}},\overline{v_0v_{1}}\rangle...\in\IN^\IN$
is the name of a function $\Phi_p$ with $D(p)=r$. 
If $q:=v_0v_1...$ is finite, then clearly $(r,q)\not\in\graph(f)$. Otherwise, $q=\Phi_p(r)=\Phi_pD(p)\not\in fD(p)=f(r)$
and hence also $(r,q)\not\in\graph(f)$. This means that $\sigma:\IN^{**}\to\IN^*$
with $\sigma(v_0,...,v_{i-1}):=w_i$ is a winning strategy for Player~I.\smallskip\\
Vice versa let  $\sigma:\IN^{**}\to\IN^*$ be a winning strategy for Player~I.
We need to define a continuous discontinuity function $D:\IN^\IN\to\IN^\IN$ for $f$.
Given $p\in\IN^\IN$, we can determine a monotone $h:\IN^*\to\IN^*$ that approximates $\Phi_p$, i.e., 
such that $\Phi_p(q)=\sup_{w\prefix q}h(w)$ for all $q\in\dom(\Phi_p)$. 
We let $D(p):=w_0w_1...$
where the $w_i$ are inductively given by $w_i:=\sigma(v_0,...,v_{i-1})$
and $v_0...v_i:=h(w_0...w_i)$.
Since $\sigma$ is a winning strategy for Player~I, we have $D(p)\in\dom(f)$. In particular, $D$ is total and continuous.
Moreover, with $q:=v_0v_1...$ we have $(D(p),q)\not\in\graph(f)$.
This could mean that $q$ is finite and hence $D(p)\not\in\dom(\Phi_p)$ 
or otherwise $\Phi_pD(p)=q$. In any case, $\Phi_pD(p)\not\in fD(p)$ holds and $D$ is a discontinuity function
for $f$.
\end{proof}

In passing, we note that the proof is fully constructive in the sense that computable winning strategies
translate into computable functions and vice versa in the following sense.

\begin{corollary}[Wadge games]
\label{cor:Wadge-game}
We consider the Wadge game of a given problem ${f:\In X\mto Y}$.
Then the following hold:
\begin{enumerate}
\item $f$ is computable $\iff$ Player~II has a computable winning strategy for $f$,
\item $f$ is computably discontinuous $\iff$ Player~I has a computable winning strategy for $f$.
\end{enumerate}
\end{corollary}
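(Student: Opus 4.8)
The plan is to revisit the proof of Theorem~\ref{thm:Wadge-game} and to observe that every construction occurring there is effective, i.e.\ it transforms computable data into computable data uniformly; no new idea is required. As in that proof, one first passes to the realizer version: $f$ and $f^\r$ have exactly the same realizers, so $f$ is computable if and only if $f^\r$ is, we have already noted that $f$ is computably discontinuous if and only if $f^\r$ is, and the Wadge game of $f$ is by definition the Wadge game of $f^\r$. Hence it suffices to treat problems $f:\In\IN^\IN\mto\IN^\IN$.

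For (1): if $f$ is computable, it has a computable realizer $F$, which by the definition of computability recalled in Section~\ref{sec:recursion} is approximated by a computable monotone word function $h$. The strategy $\sigma(\varepsilon):=\varepsilon$ and $\sigma(w_0,\dots,w_i):=v_i$ with $v_0\dots v_i:=h(w_0\dots w_i)$ built in the proof of Theorem~\ref{thm:Wadge-game}~(1) is then itself a computable word function, and a winning strategy for Player~II. Conversely, from a computable winning strategy $\sigma$ for Player~II, the word function $h$ with $h(a_0\dots a_i):=v_0\dots v_i$, $v_i:=\sigma(a_0,\dots,a_i)$, is computable and monotone, so $F(p):=\sup_{w\prefix p}h(w)$ is a computable realizer of $f$.

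For (2): if $f$ is computably discontinuous with computable discontinuity function $D$, then $D$ is approximated by a computable monotone $h$, and the moves $w_i$ produced in the proof of Theorem~\ref{thm:Wadge-game}~(2), namely $w_0:=h(\varepsilon)$ and $w_0\dots w_i:=h(\langle\overline{w_0},\overline{v_0}\rangle\dots\langle\overline{w_0\dots w_{i-1}},\overline{v_0\dots v_{i-1}}\rangle)$, are computed from the opponent's moves $v_j$ using only $h$, the numbering $\w$ and the pairing function, all of which are computable; this gives a computable winning strategy for Player~I. Conversely, given a computable winning strategy $\sigma$ for Player~I, the discontinuity function $D$ from the proof of Theorem~\ref{thm:Wadge-game}~(2) is computable: from a $\Phi$-name $p$ one computes, uniformly in $p$, a monotone word function $h$ approximating $\Phi_p$, and then $D(p):=w_0w_1\dots$ with $w_i:=\sigma(v_0,\dots,v_{i-1})$ and $v_0\dots v_i:=h(w_0\dots w_i)$ is computable.

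The only step deserving a word of care — and hence the (mild) main obstacle — is this last implication, where one must note that extracting an approximating monotone word function of $\Phi_p$ from a name $p$ is computable uniformly in $p$; but this is immediate from the way the representation $\Phi$ was set up in Section~\ref{sec:recursion}. Everything else is routine bookkeeping of the effectivity already implicit in the proof of Theorem~\ref{thm:Wadge-game}.
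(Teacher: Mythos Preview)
Your proposal is correct and follows exactly the approach intended by the paper: the paper does not give a separate proof of this corollary but simply remarks that the proof of Theorem~\ref{thm:Wadge-game} is ``fully constructive in the sense that computable winning strategies translate into computable functions and vice versa''. You have spelled out precisely these effectivity observations, including the only point that deserves a moment's thought (the uniform computability of extracting an approximating monotone word function from a $\Phi$-name), so your write-up is a faithful elaboration of what the paper leaves implicit.
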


One reason why it is useful to have characterizations of continuity and effective discontinuity
in game form is that for certain games determinacy conditions are known and well understood.
The axiom of determinacy ($\AD$), which was introduced by Mycielski and Steinhaus~\cite{MS62,Myc64} 
states that every Gale-Stewart game is determined.
We note that this axiom is inconsistent with the axiom of choice.
We recall the definition of Gale-Stewart games~\cite{Soa16}.\footnote{Such games over binary digits were already considered by Ulam~\cite{Tel87}.}

\begin{definition}[Gale-Stewart game]
Let $A\In\IN^\IN$. Then in a {\em Gale-Stewart game} $A$
two players I and II consecutively play numbers
\begin{itemize}
\item Player~I: $x_0\quad  x_1\quad x_2 \quad...\quad=:x$,
\item Player~II: $\quad y_0\quad  y_1\quad y_2 \quad...\quad=:y$,
\end{itemize}
with $x_i,y_i\in\IN$. 
The concatenated sequence $r=\langle x,y\rangle\in\IN^\IN$ is called a {\em run} of the game $A$.
We say that Player~II {\em wins} the run $r$ of $A$, if
$r\in A$. Otherwise Player~I {\em wins}. 
\end{definition}

What we have described as the Gale-Stewart game of $A$ is usually considered as the Gale-Stewart game of the complement $\IN^\IN\setminus A$.
We prefer the complementary version as it fits better to our definition of
Wadge games.

Winning strategies for Gale-Stewart games can be defined analogously to Lipschitz games. 
In fact, Lipschitz games of problems $f$ are essentially Gale-Stewart games on $\graph(f)$,
at least for total problems on Baire space. For general problems, the difference can be expressed using
totalizations of problems, a concept that was studied in \cite{NP18,BG21a}.

\begin{definition}[Totalization]
Let $f:\In X\mto Y$ be a problem. Then the {\em totalization} $\T f:X\mto Y$ is defined by
\[\T f(x):=\left\{\begin{array}{ll}
f(x) & \mbox{if $x\in\dom(f)$}\\
Y & \mbox{otherwise}
\end{array}\right.\]
\end{definition}

Now we can express the relation between Lipschitz games and Gale-Stewart games on graphs
as follows.

\begin{proposition}[Lipschitz games and Gale-Stewart games]
\label{prop:Lipschitz-Gale-Stewart}
Let $f:\In \IN^\IN\mto\IN^\IN$ be a problem. Then the winning strategies for either player in the Lipschitz game $f$ are identical to the winning strategies 
of the corresponding player for the Gale-Stewart game $\langle\graph(\T f)\rangle$.
\end{proposition}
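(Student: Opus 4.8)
The plan is a direct unwinding of the definitions: I will argue that the Lipschitz game of $f$ and the Gale-Stewart game $\langle\graph(\T f)\rangle$ have, move for move, the same game tree and the same winner on every run, so that a function $\sigma$ is a winning strategy for a given player in one game precisely when it is one in the other. The first point is that in a Lipschitz game each player contributes exactly one natural number per turn (the empty word is not allowed, in contrast with the general Wadge game), with Player~I moving first; hence every run is a genuine pair $(x,y)\in\IN^\IN\times\IN^\IN$, and its interleaving $r=\langle x,y\rangle$ (so that $r(2n)=x(n)$ and $r(2n+1)=y(n)$) is precisely a run of the Gale-Stewart game $A:=\langle\graph(\T f)\rangle$. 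Since $\langle\cdot,\cdot\rangle:\IN^\IN\times\IN^\IN\to\IN^\IN$ is a bijection, the runs of the two games correspond bijectively, and for each player the winning strategies of the two games are functions of literally the same type (namely $\sigma:\IN^*\to\IN$, fed the opponent's moves so far as specified in the respective definitions).

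It then remains to check that the two winning conditions agree under this correspondence. By definition $A=\{\langle x,y\rangle:(x,y)\in\graph(\T f)\}$, and unfolding the totalization, $(x,y)\in\graph(\T f)$ holds if and only if either $x\in\dom(f)$ and $y\in f(x)$, or else $x\notin\dom(f)$ (with $y$ arbitrary). This is verbatim the condition ``$(x,y)\in\graph(f)$ or $x\notin\dom(f)$'' under which Player~II wins the run $(x,y)$ of the Lipschitz game of $f$. By injectivity of the pairing, $r=\langle x,y\rangle\in A$ if and only if $(x,y)\in\graph(\T f)$, so Player~II wins the Gale-Stewart run $r$ exactly when she wins the associated Lipschitz run $(x,y)$, and dually for Player~I. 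Consequently a strategy of the appropriate type produces the same run in both games, and that run has the same winner; this yields the claimed equality of the sets of winning strategies, separately for each player.

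I do not expect a genuine obstacle; the only point deserving care is why the totalization $\T f$ is needed rather than $\graph(f)$ itself. When $x\notin\dom(f)$, Player~II wins the Lipschitz run for every response $y$, so the Gale-Stewart winning set must contain $\langle x,y\rangle$ for all such $y$; using $\graph(f)$ in place of $\graph(\T f)$ would instead make each of those runs a loss for Player~II, since no $y$ satisfies $(x,y)\in\graph(f)$, and the correspondence would fail. For total problems $f$ on Baire space one has $\T f=f$, the distinction vanishes, and one recovers the informal statement preceding the proposition.
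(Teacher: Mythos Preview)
Your proof is correct and follows essentially the same approach as the paper: the paper's proof is simply the one-line observation that $\graph(\T f)=\graph(f)\cup\big((\IN^\IN\setminus\dom(f))\times\IN^\IN\big)$ is exactly Player~II's payoff set in the Lipschitz game, which is precisely what you verify in your second paragraph. Your version is more detailed (spelling out the bijection of runs and the identity of strategy types, and explaining why $\T f$ rather than $f$ is needed), but the mathematical content is the same.
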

\begin{proof}
The proof follows from the easy observation that 
\[\graph(\T f)=\graph(f)\cup\left((\IN^\IN\setminus\dom(f))\times\IN^\IN\right)\]
and this is exactly the payoff set for player II in the Lipschitz game $f$.
\end{proof}

We note that $f\leqW\T f$, but in general $\T f$ is not Weihrauch equivalent to $f$~\cite{BG21a}.

For general Wadge games the situation is somewhat more subtle as players can play empty words.
We can, however, bridge the step between Wadge and Lipschitz games by coding words in numbers.
We consider a canonical bijective standard numbering $\w:\IN\to\IN^*,i\mapsto\w_i$ of words over natural numbers, 
which we lift to a partial function
\[\w:\In\IN^\IN\to\IN^\IN,p\mapsto\w_{p(0)}\w_{p(1)}\w_{p(2)}...\]
that we also denote by $\w$. Here $\dom(\w)=\{p\in\IN^\IN:\w_{p(0)}\w_{p(1)}\w_{p(2)}...$ is infinite$\}$
and hence $\w$ is not total (since there is a number $n$ that encodes the empty word).
Now we can consider the problem $f^\w:=\w^{-1}\circ f\circ\w$, which is $f$ lifted to numbers (that encode words).
We note that $f^\w$ is not total, even if $f$ is total, since $\w$ is not total.
We mention that $\w:\In\IN^\IN\to\IN^\IN$ can be seen as a precomplete representation of $\IN^\IN$ (in fact, as an alternative way to define the {\em precompletion} of $\id:\IN^\IN\to\IN^\IN$, as studied in \cite{BG20,BG21a}). 
Since $\w$ is computably equivalent to $\id$ as a representation, we
obtain $f\equivSW f^\w$.
Now we can express our observation as follows.

\begin{proposition}[Wadge games and Lipschitz games]
\label{prop:Wadge-Lipschitz}
Let $f:\In\IN^\IN\mto\IN^\IN$ be a problem and let $P\in\{\mathrm{I, II}\}$. Then the following are equivalent.
\begin{enumerate}
\item Player~$P$ has a winning strategy for the Wadge game $f$.
\item Player~$P$ has a winning strategy for the Lipschitz game $f^\w$.
\end{enumerate}
\end{proposition}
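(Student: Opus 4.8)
The plan is to transport strategies through the numbering $\w$ one move at a time. The key preliminary is an explicit description of $f^\w=\w^{-1}\circ f\circ\w$ at the level of instances and solutions: since $\w:\In\IN^\IN\to\IN^\IN$ is a (partial) surjection, we have $\dom(f^\w)=\{p\in\dom(\w):\w(p)\in\dom(f)\}$ and $f^\w(p)=\{q\in\dom(\w):\w(q)\in f(\w(p))\}$ for $p\in\dom(f^\w)$. Two trivial observations will be used throughout: every element of $\dom(f)$ is an \emph{infinite} sequence, so a finite concatenation $v_0v_1\cdots\in\IN^*$ is never an instance or a solution of $f$; and $\dom(f^\w)\subseteq\dom(\w)$, so $p\notin\dom(\w)$ is never an instance of $f^\w$.

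For (1)$\To$(2) I would, given a winning strategy for Player~$P$ in the Wadge game $f$, build a strategy for Player~$P$ in the Lipschitz game $f^\w$ that at each move decodes the opponent's number $n$ to the word $\w_n$, lets the Wadge strategy react, and re-encodes its word answer $v$ as the number $\overline v$. Since one Lipschitz move corresponds to exactly one Wadge move, a run $(a,b)$ of the Lipschitz game is carried to the run $\bigl(\w_{a_0}\w_{a_1}\cdots,\,\w_{b_0}\w_{b_1}\cdots\bigr)$ of the Wadge game, and these concatenations, when infinite, equal $\w(a)$ and $\w(b)$. It then remains to match winning conditions. For $P=\mathrm{II}$: $(\w(a),\w(b))\in\graph(f)$ gives $\w(b)\in f(\w(a))$, hence $b\in f^\w(a)$ by the description above; and if $\w(a)\notin\dom(f)$, or the first concatenation is finite so that $a\notin\dom(\w)$, then $a\notin\dom(f^\w)$. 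For $P=\mathrm{I}$ one negates: a Player~I winning strategy in the Wadge game always produces an infinite first concatenation in $\dom(f)$, so $a\in\dom(\w)$ with $\w(a)\in\dom(f)$, whence $a\in\dom(f^\w)$; and $(\w(a),\w(b))\notin\graph(f)$ forces $(a,b)\notin\graph(f^\w)$, since the latter would yield $\w(b)\in f(\w(a))$.

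For (2)$\To$(1) I would run the same dictionary backwards: given a winning strategy for Player~$P$ in the Lipschitz game $f^\w$, the opponent's word move $v$ in the Wadge game is encoded as $\overline v$, passed to the Lipschitz strategy, and its number answer $n$ decoded to the word $\w_n$. A Wadge run $(x,y)$ with $x=x_0x_1\cdots$ and $y=y_0y_1\cdots$ is carried to the Lipschitz run $\bigl(\overline{x_0}\,\overline{x_1}\cdots,\,\overline{y_0}\,\overline{y_1}\cdots\bigr)$, whose decodings recover $x$ and $y$ when these are infinite, and one repeats the case analysis on finiteness of the concatenations and on membership in $\dom(f),\dom(f^\w),\graph(f),\graph(f^\w)$. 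In particular, surjectivity of $\w$ is what lets us pass from ``$a\in\dom(\w)$ and $\w(a)\in\dom(f)$'' to ``$a\in\dom(f^\w)$'', which is exactly what is needed so that the disjunct $a\notin\dom(f^\w)$ in Player~II's winning condition is not spuriously triggered (and dually, in the Player~I case of (1)$\To$(2)).

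I expect the only real effort — bookkeeping rather than difficulty — to be this edge-case analysis, carried out in all four sub-implications: matching ``the opponent played only finitely much'' on the word side with ``the decoded instance lies outside $\dom(f)$ or outside $\dom(\w)$'' on the number side, and matching a finite second concatenation (which blocks membership in $\graph(f)$) with the corresponding failure of membership in $\graph(f^\w)$. Once the description of $f^\w$ from the first paragraph is in hand, each of the four verifications is a short unwinding of the definition of who wins.
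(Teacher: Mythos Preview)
Your proposal is correct and follows essentially the same approach as the paper: translate strategies move-by-move through the bijection $\w:\IN\to\IN^*$ via $\lambda(n_0,\dots,n_k):=\overline{\sigma(\w_{n_0},\dots,\w_{n_k})}$ and $\sigma(w_0,\dots,w_k):=\w_{\lambda(\overline{w_0},\dots,\overline{w_k})}$. The paper simply states these formulas and leaves the verification implicit, whereas you have (correctly) spelled out the case analysis on finiteness of the concatenations and on membership in $\dom(f)$, $\dom(f^\w)$, $\graph(f)$, $\graph(f^\w)$, including the observation that surjectivity of $\w$ is what guarantees $a\in\dom(\w)\wedge\w(a)\in\dom(f)\Rightarrow a\in\dom(f^\w)$.
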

\begin{proof}
Given a winning strategy $\sigma:\IN^{**}\to\IN^*$ for player $P$ of the Wadge game $f$, we have to convert this strategy
into a strategy $\lambda:\IN^*\to\IN$ for player $P$ of the Lipschitz game $f^\w$.
For this purpose we just have to define $\lambda$ such that $\lambda(n_0,...,n_k):=\w^{-1}\sigma(\w_{n_0},...,\w_{n_k})$.
Vice versa, if $\lambda:\IN^*\to\IN$ is a winning strategy for player $P$ of the Lipschitz game $f^\w$, then
$\sigma(w_0,...,w_k):=\w_{\lambda(\w^{-1}(w_0),...,\w^{-1}(w_k))}$ defines a winning strategy $\sigma:\IN^{**}\to\IN^*$
for player $P$ of the Wadge game $f$.
\end{proof}

If we combine Propositions~\ref{prop:Lipschitz-Gale-Stewart} and \ref{prop:Wadge-Lipschitz} then we obtain the following
result.

\begin{corollary}[Wadge games and Gale-Stewart games]
\label{cor:Wadge-Gale-Stewart}
Let $f:\In X\mto Y$ be a problem and let $P\in\{\mathrm{I, II}\}$. Then the following are equivalent.
\begin{enumerate}
\item Player~$P$ has a winning strategy for the Wadge game $f$,
\item Player~$P$ has a winning strategy for the Lipschitz game $f^\mathrm{rw}$,
\item Player~$P$ has a winning strategy for the Gale-Stewart game $\langle\graph(\T(f^\mathrm{rw}))\rangle$.
\end{enumerate}
\end{corollary}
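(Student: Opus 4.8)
The plan is to obtain the corollary as a straightforward composition of the two preceding propositions, so no new game-theoretic work is needed. First I would unwind the definitions: by our convention the Wadge game of a general problem $f:\In X\mto Y$ \emph{is} the Wadge game of its realizer version $f^\r:\In\IN^\IN\mto\IN^\IN$, so statement~(1) literally asserts that Player~$P$ has a winning strategy for the Wadge game of $f^\r$. Since $f^\r$ is of the type $\In\IN^\IN\mto\IN^\IN$ required by Proposition~\ref{prop:Wadge-Lipschitz}, that proposition applies and tells us this is equivalent to Player~$P$ having a winning strategy for the Lipschitz game $(f^\r)^\w$. By the notational convention $(f^\r)^\w=\w^{-1}\circ f^\r\circ\w=f^\mathrm{rw}$, and this is exactly statement~(2); hence $(1)\iff(2)$.

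For $(2)\iff(3)$ I would simply apply Proposition~\ref{prop:Lipschitz-Gale-Stewart} to the problem $g:=f^\mathrm{rw}:\In\IN^\IN\mto\IN^\IN$. That proposition states that the winning strategies for either player in the Lipschitz game $g$ are identical to the winning strategies of the corresponding player in the Gale-Stewart game $\langle\graph(\T g)\rangle$; substituting $g=f^\mathrm{rw}$ yields the equivalence of~(2) and~(3), and in fact the stronger fact that the two sets of winning strategies literally coincide. Chaining the two equivalences through~(2) then gives all three statements equivalent.

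The only points needing a moment's care — and I expect the bookkeeping, rather than any genuine difficulty, to be where a reader might stumble — are: (i) verifying that no spurious extra layer of realizer-taking or word-coding appears, i.e. that the realizer version of $f^\r$ is again $f^\r$ and that $(f^\r)^\w$ is precisely what $f^\mathrm{rw}$ denotes, so that Propositions~\ref{prop:Wadge-Lipschitz} and~\ref{prop:Lipschitz-Gale-Stewart} compose cleanly; and (ii) recalling that $\w:\In\IN^\IN\to\IN^\IN$ is non-total, so that $f^\mathrm{rw}$ need not be total even when $f$ is — which is exactly why the totalization $\T$ is present in~(3) and cannot be omitted. Neither observation affects the argument, but both are worth stating so that the appearance of $f^\mathrm{rw}$ and $\T$ in the statement, rather than $f$ itself, is transparent.
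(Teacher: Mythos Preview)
Your proposal is correct and matches the paper's approach exactly: the paper does not even write out a proof, stating only that the corollary is obtained by combining Propositions~\ref{prop:Lipschitz-Gale-Stewart} and~\ref{prop:Wadge-Lipschitz}. Your unpacking of the definitions (in particular that the Wadge game of $f$ is by convention that of $f^\r$, so Proposition~\ref{prop:Wadge-Lipschitz} applied to $f^\r$ yields the Lipschitz game $f^{\mathrm{rw}}$) is precisely the bookkeeping the paper leaves implicit.
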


We note that $\T(f^\mathrm{rw})\equivSW\overline{f}$, where $\overline{f}$ denotes the so-called {\em completion} of $f$, see \cite[Lemma~4.9]{BG21a}.
Hence, the operation of precompletion moves us from Wadge games to Lipschitz games and the operation of completion to Gale-Stewart games. 

In order to have some simple terminology at hand, we introduce the following notions of determinacy.

\begin{definition}[Determinacy]
We call a problem $f:\In X\mto Y$ {\em determined}, if it is either continuous or effectively discontinuous.
By {\em Weihrauch determinacy} we understand the property that every problem $f$ is determined.
\end{definition}

Hence, by Theorem~\ref{thm:effective-discontinuity} $f$ is determined if either $f\leq_\mathrm{W}^*\id$ or $\DIS\leq_\mathrm{W}^* f$ holds and
by Theorem~\ref{thm:Wadge-game} this is the case if and only if the Wadge game $f$ is determined in the sense that either player I or player II has a winning strategy.
Weihrauch determinacy means that every $f$ satisfies the above dichotomies. 

Since the axiom of determinacy $\AD$ states that every Gale-Stewart game is determined, i.e., either player I or player II has a winning strategy,
we immediately get the following conclusion of Corollary~\ref{cor:Wadge-Gale-Stewart}
with the help of Theorem~\ref{thm:effective-discontinuity} (see also \cite[Corollary~3.7]{NP19}.

\begin{corollary}[Determinacy]
\label{cor:determinacy}
$\ZF+\DC+\AD$ implies Weihrauch determinacy.
\end{corollary}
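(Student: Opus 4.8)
The plan is simply to chain together the game-theoretic characterizations already in hand. Fix an arbitrary problem $f:\In X\mto Y$; by Definition~\ref{def:problem} it has a realizer, so $f^{\mathrm r}$ and hence $f^{\mathrm{rw}}$ are well defined. We must show that $f$ is determined, i.e., continuous or effectively discontinuous. By Theorem~\ref{thm:Wadge-game} this is exactly the assertion that the Wadge game of $f$ is determined, in the sense that one of the two players has a winning strategy.

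First I would apply Corollary~\ref{cor:Wadge-Gale-Stewart}: for each $P\in\{\mathrm{I},\mathrm{II}\}$, player~$P$ has a winning strategy for the Wadge game of $f$ if and only if player~$P$ has a winning strategy for the Gale-Stewart game $\langle\graph(\T(f^{\mathrm{rw}}))\rangle$. Next I would invoke $\AD$ with payoff set $A:=\langle\graph(\T(f^{\mathrm{rw}}))\rangle\In\IN^\IN$: since $\AD$ asserts that every Gale-Stewart game is determined, one of the two players has a winning strategy in the Gale-Stewart game $A$. Transporting this back through Corollary~\ref{cor:Wadge-Gale-Stewart}, the corresponding player has a winning strategy in the Wadge game of $f$, and then Theorem~\ref{thm:Wadge-game} yields that $f$ is continuous (if it is player~II) or effectively discontinuous (if it is player~I). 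Since $f$ was arbitrary, every problem is determined, which is Weihrauch determinacy. The axiom $\DC$ enters only as the base theory over which the cited results — Theorem~\ref{thm:Wadge-game}, Corollary~\ref{cor:Wadge-Gale-Stewart}, and the underlying representation machinery from section~\ref{sec:recursion} — are established.

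There is no real obstacle here; the one point that needs care is the bookkeeping about which object $\AD$ is applied to. One must check that $\langle\graph(\T(f^{\mathrm{rw}}))\rangle$ really is a subset of $\IN^\IN$ and that our (complementary) convention for Gale-Stewart games matches the formulation of $\AD$ as ``every Gale-Stewart game is determined'' — both are immediate from the definitions, since swapping the winner on all runs of a game does not affect whether it is determined. It is also worth remarking, in contrast with Theorem~\ref{thm:AC}, that this argument cannot be combined with $\AC$, since $\AD$ is inconsistent with $\AC$.
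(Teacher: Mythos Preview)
Your proposal is correct and follows essentially the same route as the paper: apply $\AD$ to the Gale-Stewart game $\langle\graph(\T(f^{\mathrm{rw}}))\rangle$, transport the winning strategy back to the Wadge game via Corollary~\ref{cor:Wadge-Gale-Stewart}, and then invoke the game-theoretic characterization of continuity and effective discontinuity. The only cosmetic difference is that the paper cites Theorem~\ref{thm:effective-discontinuity} where you cite Theorem~\ref{thm:Wadge-game}, but since the latter is precisely the bridge between winning strategies and (effective dis)continuity, your reference is at least as direct.
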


We can conclude more from Corollary~\ref{cor:Wadge-Gale-Stewart}. Namely, in $\ZF+\AC$ it is known by a Theorem of Martin~\cite[Theorem~20.5]{Kec95} that every
Borel set $A\In\IN^\IN$ is determined, i.e., either player I or player II has a winning strategy
(see also \cite[Corollary~3.6]{NP19}).

\begin{proposition}[Borel determinacy]
\label{prop:Borel-determinacy}
In $\ZFC$ every problem $f:\In X\mto Y$ such that $\graph(f^\r)\In\IN^\IN\times\IN^\IN$ and $\dom(f^\r)\In\IN^\IN$ are Borel sets, is determined.
\end{proposition}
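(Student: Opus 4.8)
The plan is to deduce this from Martin's Borel determinacy theorem~\cite[Theorem~20.5]{Kec95} via Corollary~\ref{cor:Wadge-Gale-Stewart}. Recall that by that corollary, for each player $P\in\{\mathrm{I},\mathrm{II}\}$ the following are equivalent: $P$ has a winning strategy for the Wadge game $f$, and $P$ has a winning strategy for the Gale-Stewart game with payoff set $A:=\langle\graph(\T(f^{\mathrm{rw}}))\rangle$. By Theorem~\ref{thm:Wadge-game}, the problem $f$ is determined (i.e., continuous or effectively discontinuous) precisely when one of the two players has a winning strategy for the Wadge game $f$. Hence the whole statement reduces to showing that $A$ is a Borel subset of $\IN^\IN$: in $\ZFC$ Martin's theorem then yields that the Gale-Stewart game $A$ is determined, and the above equivalences carry this back to $f$.

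So the real task is to check that $A$ is Borel, and I would do this in three steps, each one just invoking closure of the Borel $\sigma$-algebra under the relevant operation. First, passing from $f^\r$ to $f^{\mathrm{rw}}=(f^\r)^\w$: reading $\w^{-1}\circ f^\r\circ\w$ as a composition of relations gives $\graph(f^{\mathrm{rw}})=(\w\times\w)^{-1}(\graph(f^\r))$ and, likewise, $\dom(f^{\mathrm{rw}})=\w^{-1}(\dom(f^\r))$; since $\w$ is continuous on its domain $\dom(\w)$, which is a Borel (indeed $\PO2$) subset of $\IN^\IN$, the maps $\w$ and $\w\times\w$ are Borel-measurable partial functions with Borel domains, so these preimages of the Borel sets $\graph(f^\r)$ and $\dom(f^\r)$ are again Borel. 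Second, totalizing: by (the proof of) Proposition~\ref{prop:Lipschitz-Gale-Stewart},
\[\graph(\T(f^{\mathrm{rw}}))=\graph(f^{\mathrm{rw}})\cup\big((\IN^\IN\setminus\dom(f^{\mathrm{rw}}))\times\IN^\IN\big),\]
which is Borel by the previous step. Third, interleaving: the pairing map $\langle\cdot,\cdot\rangle\colon\IN^\IN\times\IN^\IN\to\IN^\IN$ is a homeomorphism, hence maps Borel sets to Borel sets, so $A=\langle\graph(\T(f^{\mathrm{rw}}))\rangle$ is Borel, as required.

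I do not expect a genuine obstacle here; the only point that will need a little care is that $\w$ is a partial and non-injective map, so that $f^{\mathrm{rw}}$ must be treated as a composition of relations rather than of functions, and one has to confirm that $\dom(\w)$ is Borel and that $\w$ is continuous on it, so that $\w\times\w$ is a Borel-measurable partial function with Borel domain and therefore keeps preimages of Borel sets Borel. Everything else is a direct assembly of Corollary~\ref{cor:Wadge-Gale-Stewart}, Theorem~\ref{thm:Wadge-game}, and Martin's theorem.
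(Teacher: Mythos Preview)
Your proposal is correct and follows essentially the same route as the paper: reduce via Corollary~\ref{cor:Wadge-Gale-Stewart} (and implicitly Theorem~\ref{thm:Wadge-game}) to showing that $\langle\graph(\T(f^{\mathrm{rw}}))\rangle$ is Borel, then verify this using $\graph(f^\w)=(\w\times\w)^{-1}(\graph(f))$, $\dom(f^\w)=\w^{-1}(\dom(f))$, the continuity of $\w$ on its $\PO{2}$ domain, and the totalization formula, before invoking Martin's Borel determinacy. Your write-up is slightly more explicit about the intermediate steps (the totalization formula and the pairing homeomorphism), but the argument is the same.
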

\begin{proof}
Firstly, for a problem $f:\In\IN^\IN\mto\IN^\IN$ we note that 
\[\graph(f^\w)=\graph(\w^{-1}\circ f\circ \w)=(\w\times\w)^{-1}(\graph(f))\] 
and $\dom(f^\w)=\dom(f\circ\w)=\w^{-1}(\dom(f))$.
Since $\w$ is continuous with a $\PO{2}$--domain $\dom(\w)$, it follows that $\graph(f^\w)$ and $\dom(f^\w)$ are Borel measurable, 
if $\graph(f)$ and $\dom(f)$ are so. Hence, $\langle\graph(\T f^\w)\rangle$ is Borel measurable in this situation.
For a general problem $f:\In X\mto Y$, we can apply the previous considerations to $f^\r:\In\IN^\IN\mto\IN^\IN$ and we obtain the claim
using Borel determinacy and Corollary~\ref{cor:Wadge-Gale-Stewart}.
\end{proof}

That is even under the axiom of choice $\AC$ examples of problems that are discontinuous but not effectively so have to be rather complicated.
As Polish spaces admit continuous and total versions of the Cauchy representation (see, e.g., \cite[Corollary~4.4.12]{Bra98b}),
we also obtain the following version of the previous corollary.

\begin{corollary}[Borel determinacy on Polish spaces]
\label{cor:Borel-determinacy-polish}
Let $X,Y$ be Polish spaces.
In $\ZFC$ every problem ${f:\In X\mto Y}$ such that $\graph(f)\In X\times Y$ and $\dom(f)\In X$ are Borel sets, is determined.
\end{corollary}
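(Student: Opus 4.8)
The plan is to deduce this directly from Proposition~\ref{prop:Borel-determinacy} by transporting the Borel hypotheses on $\graph(f)\In X\times Y$ and $\dom(f)\In X$ down to Baire space through a sufficiently well-behaved representation. First I would invoke \cite[Corollary~4.4.12]{Bra98b}: every Polish space admits an admissible representation that is both continuous and total, and I would fix such representations $\delta_X:\IN^\IN\to X$ and $\delta_Y:\IN^\IN\to Y$. This is no loss of generality: by Theorems~\ref{thm:dichotomy} and~\ref{thm:effective-discontinuity}, ``$f$ is determined'' is equivalent to ``$f\leq_\mathrm{W}^*\id$ or $\DIS\leq_\mathrm{W}^* f$'', a statement that depends only on the continuous Weihrauch ($\equiv_\mathrm{W}^*$) degree of $f$; since any two admissible representations of a Polish space are continuously equivalent, passing to these particular $\delta_X,\delta_Y$ alters neither the degree nor, hence, determinacy.

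Next I would compute the realizer version $f^{\mathrm{r}}=\delta_Y^{-1}\circ f\circ\delta_X$. Since $\delta_X$ is total one has $\dom(f^{\mathrm{r}})=\delta_X^{-1}(\dom(f))$, and unwinding definitions gives $\graph(f^{\mathrm{r}})=(\delta_X\times\delta_Y)^{-1}(\graph(f))$, using that $(\delta_X(p),\delta_Y(q))\in\graph(f)$ already forces $\delta_X(p)\in\dom(f)$. As $\delta_X,\delta_Y$ are total and continuous, so is $\delta_X\times\delta_Y:\IN^\IN\times\IN^\IN\to X\times Y$, hence Borel measurable; therefore the preimages of the Borel sets $\dom(f)$ and $\graph(f)$ are Borel in $\IN^\IN$ and in $\IN^\IN\times\IN^\IN$, respectively. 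Thus $\graph(f^{\mathrm{r}})\In\IN^\IN\times\IN^\IN$ and $\dom(f^{\mathrm{r}})\In\IN^\IN$ are Borel, and $f^{\mathrm{r}}$ has a realizer because $f$ does, so Proposition~\ref{prop:Borel-determinacy} applies and yields that $f$ is determined.

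The only genuinely delicate point is the reduction step at the start — justifying that one may replace the given representations of $X$ and $Y$ by total continuous admissible ones without affecting determinacy. This rests on the degree/game characterizations of Theorems~\ref{thm:effective-discontinuity} and~\ref{thm:Wadge-game} together with the continuous equivalence of all admissible representations of a Polish space; everything after that is the routine observation that continuous total maps pull Borel sets back to Borel sets.
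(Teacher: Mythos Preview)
Your proof is correct and follows essentially the same route as the paper's own argument: pass to total continuous (Cauchy) representations via \cite[Corollary~4.4.12]{Bra98b}, observe that $\dom(f^\r)=\delta_X^{-1}(\dom(f))$ and $\graph(f^\r)=(\delta_X\times\delta_Y)^{-1}(\graph(f))$ are Borel as preimages under continuous total maps, and then invoke Proposition~\ref{prop:Borel-determinacy}. The only difference is that you spell out explicitly why the change of representation is harmless (via invariance of the continuous Weihrauch degree under continuously equivalent admissible representations), whereas the paper simply writes ``we can assume''; this extra care is welcome but does not constitute a different approach.
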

\begin{proof}
For Polish spaces $X$ and $Y$ we can assume that the Cauchy representations 
$\delta_X:\IN^\IN\to X$ and $\delta_Y:\IN^\IN\to Y$ are total~\cite[Corollary~4.4.12]{Bra98b}.
They are also continuous maps.
Since $f^\r=\delta_Y^{-1}\circ f\circ\delta_X$, this implies that 
$\graph(f^\r)=(\delta_X\times\delta_Y)^{-1}(\graph(f))$ and
$\dom(f^\r)=\delta_X^{-1}(\dom(f))$ are Borel sets, if $\graph(f)$ and $\dom(f)$ are Borel sets.
Now the claim follows with Proposition~\ref{prop:Borel-determinacy}.
\end{proof}

It is an interesting questions whether we get the inverse implication in Corollary~\ref{cor:determinacy} in the following sense.

\begin{question}
\label{question:AD}
Does $\AD$ follow from Weihrauch determinacy in $\ZF+\DC$?
\end{question}

We note that a similar question, namely whether Wadge determinacy (in the usual sense of Wadge games for sets $A,B\In\IN^\IN$) implies $\AD$ is non-trivial and the answer is not known~\cite{And03,And06}.
For our generalized Wadge games the situation seems to be simpler and we can easily obtain $\AD$ from
Lipschitz determinacy of problems (with a non-constructive proof).

\begin{proposition}
\label{prop:Lipschitz-AD}
In $\ZF+\DC$ the following are equivalent:
\begin{enumerate}
\item The axiom of determinacy $\AD$.
\item Every Lipschitz game $f:\In X\mto Y$ is determined,
i.e., either player I or player II has a winning strategy.
\end{enumerate}
\end{proposition}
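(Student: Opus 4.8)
The plan is to prove both implications. The direction $(1)\Rightarrow(2)$ is the substantive one, and the direction $(2)\Rightarrow(1)$ should follow essentially for free because Gale-Stewart games are a special case of Lipschitz games.

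\textbf{The direction $(2)\Rightarrow(1)$.} Let $A\In\IN^\IN$ and consider the Gale-Stewart game of $A$ in the sense of our definition; its payoff set for player II is $A$. Using Proposition~\ref{prop:Lipschitz-Gale-Stewart}, a Gale-Stewart game on a set $\langle\graph(\T f)\rangle$ is literally a Lipschitz game $f$, so I would like to realize an arbitrary $A$ in the form $\langle\graph(\T f)\rangle$. This is not possible for every $A$ directly, but it is not needed: given $A\In\IN^\IN$, I would define a total problem $f:\IN^\IN\mto\IN^\IN$ on Baire space by reading a run $r$ as a pair $r=\langle x,y\rangle$ and setting $f(x):=\{y:\langle x,y\rangle\in A\}$ when this set is non-empty, and $f(x):=\IN^\IN$ otherwise (i.e., $f=\T g$ where $g(x)=\{y:\langle x,y\rangle\in A\}$). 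Then $\graph(\T f)=\graph(f)$ contains $\langle x,y\rangle$ exactly when $\langle x,y\rangle\in A$ or $g(x)=\emptyset$, so the payoff set $\langle\graph(\T f)\rangle$ is $A\cup\{\langle x,y\rangle:g(x)=\emptyset\}$. The subtlety is that this extra set might differ from $A$; to fix this I would instead observe that $\{\langle x,y\rangle:g(x)=\emptyset\}=\{\langle x,y\rangle:(\forall y')\langle x,y'\rangle\notin A\}$, which is a set of runs whose membership does not actually matter for determinacy of the run-game since if $g(x)=\emptyset$ then for \emph{that} $x$ player I cannot be ``beaten'' in the sense of $A$. A cleaner route: by $\DC$ we may pick, for every $x$ with $g(x)\neq\emptyset$, a fixed witness; this shows that $A$ and $\langle\graph(\T f)\rangle$ have the same determinacy status because a winning strategy for player~I in either transfers to the other verbatim (player~I's moves build $x$), and a winning strategy for player~II in the $f$-game restricts to one in the $A$-game once we note player~II is never forced to use the extra ``free'' runs. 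So (2) applied to $f$ (regarded as a Lipschitz game, hence via Corollary~\ref{cor:Wadge-Gale-Stewart} a Gale-Stewart game) yields determinacy of $A$, i.e.\ $\AD$.

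\textbf{The direction $(1)\Rightarrow(2)$.} Assume $\AD$. Given a problem $f:\In X\mto Y$, pass to $f^\r:\In\IN^\IN\mto\IN^\IN$ (winning strategies in the Lipschitz game are unaffected, by definition of Lipschitz game of a general problem). By Proposition~\ref{prop:Lipschitz-Gale-Stewart} the Lipschitz game $f^\r$ has the same winning strategies as the Gale-Stewart game on $\langle\graph(\T f^\r)\rangle$, and $\AD$ asserts that this Gale-Stewart game is determined. Hence the Lipschitz game $f$ is determined. The only point requiring care is that the Lipschitz game of $f^\r$ is indeed on $\langle\graph(\T f^\r)\rangle$ and not on some other set; this is exactly the content of Proposition~\ref{prop:Lipschitz-Gale-Stewart}, so no new work is needed.

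\textbf{Expected main obstacle.} The delicate part is the bookkeeping in $(2)\Rightarrow(1)$: arranging that an \emph{arbitrary} $A\In\IN^\IN$ is captured (up to equi-determinacy) by the Lipschitz game of some problem $f$, because the payoff sets of problem-games are of the special shape $\langle\graph(\T f^\r)\rangle$, i.e.\ ``cylindrical over the domain'' on the complement of $\dom(f^\r)$. The resolution is that this special shape is harmless: one decodes a run $r\in\IN^\IN$ as $\langle x,y\rangle$, lets $f(x)$ be the $x$-section of $A$ (totalized), and checks that a winning strategy for either player in the $A$-game and in the $f^\r$-game correspond, using $\DC$ only to choose section-witnesses. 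I would spell this correspondence out explicitly for each player, since the ``free win'' runs where $g(x)=\emptyset$ need to be shown irrelevant to whether player~I can avoid $A$. Everything else is a direct appeal to Proposition~\ref{prop:Lipschitz-Gale-Stewart} and the definition of $\AD$.
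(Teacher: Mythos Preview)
Your overall approach matches the paper's: reduce Lipschitz games to Gale--Stewart games via Proposition~\ref{prop:Lipschitz-Gale-Stewart} for $(1)\Rightarrow(2)$, and for $(2)\Rightarrow(1)$ encode a given $A\In\IN^\IN$ as the section-problem $g(x)=\{y:\langle x,y\rangle\in A\}$. However, two points in your $(2)\Rightarrow(1)$ argument do not hold as stated.

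First, your strategy-transfer claims are not both correct. A winning strategy for player~I in the $A$-game need \emph{not} transfer to the $f$-game: if player~I's strategy happens to build an $x$ with $g(x)=\emptyset$, then every resulting run lies in the extra set $\{\langle x,y\rangle:g(x)=\emptyset\}\subseteq\langle\graph(\T f)\rangle$, so player~II wins the $f$-game on those runs. Likewise, a winning strategy for player~II in the $f$-game need not restrict to one in the $A$-game when such an $x$ exists, since player~I (not player~II) controls whether the run lands in the extra set. Second, your invocation of $\DC$ is misplaced: $\DC$ does not let you choose a witness for every $x\in\IN^\IN$ with $g(x)\neq\emptyset$; that would be a full uniformization over an uncountable index set.

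The clean fix, which you actually gesture at before switching to the ``cleaner route'', is the paper's simple case split. If there exists $x_0$ with $g(x_0)=\emptyset$, then player~I wins the Gale--Stewart game $A$ outright by playing $x_0$, so $A$ is determined and there is nothing more to do. If instead every section is nonempty (the paper calls this $A$ \emph{total}), then $\T g=g$ and $\langle\graph(\T g)\rangle=A$, so by Proposition~\ref{prop:Lipschitz-Gale-Stewart} the Lipschitz game $g$ and the Gale--Stewart game $A$ are literally the same game; determinacy of the former (given by hypothesis~(2)) yields determinacy of the latter. No choice principle beyond what is already ambient is needed. Incidentally, you have the labels reversed: $(1)\Rightarrow(2)$ is the immediate direction, and $(2)\Rightarrow(1)$ is where the (small amount of) work lies.
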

\begin{proof}
That (1) implies (2) was proved in Proposition~\ref{prop:Lipschitz-Gale-Stewart}. We still need to prove that (2) implies (1).
For every set $G\In\IN^\IN$ we define the problem $f_G:\In\IN^\IN\mto\IN^\IN$ with $f_G(p):=\{q\in\IN^\IN:\langle p,q\rangle\in G\}$
and $\dom(f_G):=\{p\in\IN^\IN:(\exists q\in \IN^\IN)\;\langle p,q\rangle\in G\}$. We call $G$ {\em total}, if $\dom(f_G)=\IN^\IN$.
It is clear that for non-total $G$ player I in the Gale-Stewart game $G$ always has a winning strategy, he just needs to play some
$p\not\in\dom(f_G)$. For total $G$ we obtain $\langle\graph(\T f_G)\rangle=\langle \graph(f_G)\rangle=G$. Hence, by Proposition~\ref{prop:Lipschitz-Gale-Stewart}
$G$ is determined if the Lipschitz game $f_G$ is so. This proves determinacy of $G$ for every $G\In\IN^\IN$.
\end{proof}

In order to answer Question~\ref{question:AD} positively one still needs to bridge the gap between Lipschitz determinacy and Wadge determinacy for problems $f$
(possibly using the techniques from~\cite{And03,And06} or a simpler argument).

We prove that Weihrauch determinacy implies at least Wadge determinacy, in the original sense of Wadge games for sets $A,B\In\IN^\IN$.
We can simulate Wadge games for sets $A,B$ using a particular problem $\frac{B}{A}$ (which was introduced in \cite{NP19}).

\begin{definition}[Wadge game for sets]
Let $A,B\In\IN^\IN$. Then the Wadge game $(A,B)$ is the Wadge game of the problem $\frac{B}{A}:\IN^\IN\mto\IN^\IN$ with
\[\graph\left(\frac{B}{A}\right):=(A\times B)\cup\left((\IN^\IN\setminus A)\times(\IN^\IN\setminus B)\right).\]
\end{definition}

In the Wadge game $\frac{B}{A}$ player II wins if the corresponding run $(x,y)$ satisfies $x\in A\iff y\in B$ and otherwise player I wins.
This shows that the Wadge game $(A,B)$ is the usual one and as usually, by {\em Wadge determinacy} we understand the property that
every Wadge game $(A,B)$ is determined, i.e., either player I or player II has a winning strategy.
This immediately yields the following.

\begin{corollary}[Wadge determinacy]
\label{cor:Wadge}
In $\ZF+\DC$ Weihrauch determinacy implies Wadge determinacy.
\end{corollary}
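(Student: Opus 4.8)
The plan is to derive Wadge determinacy directly from Weihrauch determinacy by invoking the game characterization of Theorem~\ref{thm:Wadge-game} together with the definition of the Wadge game $(A,B)$ as the Wadge game of the problem $\frac{B}{A}$. First I would fix arbitrary subsets $A,B\In\IN^\IN$ and consider the problem $\frac{B}{A}:\IN^\IN\mto\IN^\IN$ from Definition~\ref{def:problem}. Note this is a total, single-valued-on-each-fiber problem on Baire space whose realizer version is (up to the identity representations) itself, so the Wadge game of $\frac{B}{A}$ is literally the game $(A,B)$ described just above the corollary. Weihrauch determinacy asserts that every problem $f:\In X\mto Y$ is either continuous or effectively discontinuous; applying this to $f=\frac{B}{A}$ gives us one of the two alternatives.

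The key step is then to feed each alternative through Theorem~\ref{thm:Wadge-game}. If $\frac{B}{A}$ is continuous, then by part~(1) of that theorem Player~II has a winning strategy in the Wadge game of $\frac{B}{A}$, i.e.\ in the Wadge game $(A,B)$. If instead $\frac{B}{A}$ is effectively discontinuous, then by part~(2) Player~I has a winning strategy in the same game. In either case the game $(A,B)$ is determined. Since $A,B$ were arbitrary, every Wadge game for sets is determined, which is precisely Wadge determinacy. The argument goes through in $\ZF+\DC$ because both Theorem~\ref{thm:Wadge-game} and the auxiliary recursion-theoretic machinery it rests on are proved over that base theory (indeed the constructions in the proof of Theorem~\ref{thm:Wadge-game} are effective and require no choice beyond $\DC$).

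I do not anticipate a serious obstacle here: the corollary is essentially a bookkeeping consequence of the fact that the generalized Wadge game of $\frac{B}{A}$ coincides with the classical Wadge game $(A,B)$, combined with the two-sided characterization in Theorem~\ref{thm:Wadge-game}. The only point requiring mild care is the observation that passing to the realizer version $\left(\frac{B}{A}\right)^\r$ does not change the game---but since $\frac{B}{A}$ already acts on $\IN^\IN$ with the identity representation, $\left(\frac{B}{A}\right)^\r$ is strongly Weihrauch equivalent to $\frac{B}{A}$ and induces the same Wadge game, exactly as in the reduction at the start of the proof of Theorem~\ref{thm:Wadge-game}. Hence the proof is short: apply Weihrauch determinacy to $\frac{B}{A}$, then apply Theorem~\ref{thm:Wadge-game}.
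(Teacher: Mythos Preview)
Your proposal is correct and follows the same approach as the paper: the corollary is immediate from the observation (stated just before it) that the classical Wadge game $(A,B)$ coincides with the generalized Wadge game of the problem $\frac{B}{A}$, together with Theorem~\ref{thm:Wadge-game}. Two small inaccuracies in your write-up do not affect the argument: $\frac{B}{A}$ is not ``single-valued-on-each-fiber'' (its values are $B$ or $\IN^\IN\setminus B$, which are typically infinite sets), and it is not always total (if $B=\emptyset$ or $B=\IN^\IN$ the domain shrinks), but neither fact is used in the proof.
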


Another conclusion that we can either directly draw from Proposition~\ref{prop:embedding-effectively-discontinuous} without further ado 
or from Corollary~\ref{cor:Wadge} with the help of \cite[Theorem~II.C.2]{Wad83}
is the following.

\begin{corollary}[Perfect subset property]
\label{cor:PSP}
In $\ZF+\DC$ Weihrauch determinacy implies that every set $A\In\IN^\IN$ satisfies the perfect subset property.
\end{corollary}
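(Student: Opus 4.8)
The plan is to deduce Corollary~\ref{cor:PSP} from the earlier machinery in one of two ways, exactly as the text hints. I would present the shorter, self-contained argument via Proposition~\ref{prop:embedding-effectively-discontinuous}, and mention the alternative in one sentence.

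\medskip

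First I would suppose, for contradiction, that Weihrauch determinacy holds but some set $A\In\IN^\IN$ fails the perfect subset property; that is, $A$ is uncountable and contains no non-empty perfect subset. Since $A$ is uncountable it has cardinality at least $\aleph_0$, and in fact we only need $|A|\geq\aleph_1$ or, more to the point, $|A|=|\IN^\IN|$: actually a cleaner route is to observe that an uncountable set with no perfect subset must still have cardinality $|\IN^\IN|$ is \emph{not} automatic without $\AC$, so instead I would just use that $A$ is uncountable to produce an injection of $\IN^\IN$ into $A$? That also needs choice. The honest move is: the existence of \emph{some} injection $\iota:\IN^\IN\into\IN^\IN$ whose range $B$ contains no perfect subset is what Proposition~\ref{prop:embedding-effectively-discontinuous} needs, and a set violating the perfect subset property that has a subset of size continuum gives this. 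In $\ZF+\DC$ one should instead argue directly: take the hypothesized $A$ and note it is uncountable, hence (in $\ZF+\DC$, using that $\IN^\IN$ injects into any uncountable \emph{analytic} set — but $A$ need not be analytic). Given this subtlety, I would actually prefer the second route the text offers.

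\medskip

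So the main line of the proof I would write is: by Corollary~\ref{cor:Wadge}, Weihrauch determinacy implies Wadge determinacy (every Wadge game $(A,B)$ for sets $A,B\In\IN^\IN$ is determined). By Wadge's theorem \cite[Theorem~II.C.2]{Wad83}, Wadge determinacy implies the perfect subset property for subsets of Baire space. Concatenating these two implications gives the claim. This keeps everything inside the theorems already available in the excerpt and in the cited literature, so the proof is essentially two lines: ``\emph{By Corollary~\ref{cor:Wadge}, Weihrauch determinacy implies Wadge determinacy, and by \cite[Theorem~II.C.2]{Wad83} Wadge determinacy implies the perfect subset property for subsets of $\IN^\IN$.}'' I would additionally remark that the same conclusion can be obtained more directly: if $B=\range(\iota)\In\IN^\IN$ for an injection $\iota$ and $B$ contains no perfect subset, then Proposition~\ref{prop:embedding-effectively-discontinuous} gives a problem $f$ with $\dom(f)=B$ that is discontinuous but not effectively discontinuous (since no continuous embedding $2^\IN\into B$ exists), contradicting Weihrauch determinacy — and this handles any uncountable set carrying a continuum-sized subset without a perfect subset.

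\medskip

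The main obstacle is bookkeeping about choice: in $\ZF+\DC$ one must be careful that ``uncountable and no perfect subset'' is being used in the right form, and that the passage from such a set to the embedding hypothesis of Proposition~\ref{prop:embedding-effectively-discontinuous} is legitimate. The safe resolution, and the one I would commit to in the write-up, is to route through Corollary~\ref{cor:Wadge} and Wadge's theorem, since Wadge already proved the perfect subset property for $\IN^\IN$ from Wadge determinacy in $\ZF+\DC$, so no additional choice is incurred.
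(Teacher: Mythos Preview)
Your committed proof --- Weihrauch determinacy implies Wadge determinacy by Corollary~\ref{cor:Wadge}, and Wadge determinacy implies the perfect subset property by \cite[Theorem~II.C.2]{Wad83} --- is correct and is exactly one of the two routes the paper itself indicates in the sentence introducing the corollary.

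Your hesitation about the other route (via Proposition~\ref{prop:embedding-effectively-discontinuous}) is well-founded and worth recording. That proposition requires an injection $\iota:\IN^\IN\to\IN^\IN$ with range $B$, so to apply it to a counterexample $A$ to the perfect subset property one would need $|A|=2^{\aleph_0}$; in bare $\ZF+\DC$ an uncountable subset of $\IN^\IN$ with no perfect subset need not have this cardinality. Thus route~1, taken literally, only yields the weaker conclusion that every subset of $\IN^\IN$ of size continuum contains a perfect subset, and the paper's ``without further ado'' glosses over this point. Your decision to route the argument through Corollary~\ref{cor:Wadge} and Wadge's theorem is the clean choice in $\ZF+\DC$.
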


If Question~\ref{question:AD} has a negative answer or cannot be answered easily, then one could ask other questions
such as the following.

\begin{question}
Does the Baire property $\BP$ follow from Weihrauch determinacy in $\ZF+\DC$?
\end{question}

By the Baire property $\BP$ we mean that statement that every subset $A\In\IN^\IN$ can be written
as symmetric difference $A=U\Delta M$ with an open set $U\In\IN^\IN$ and a meager set $M\In\IN^\IN$.
If the answer to this question is also negative, then one can ask the following modified version 
of Question~\ref{question:AD}.

\begin{question}
Does $\AD$ follow from Weihrauch determinacy in $\ZF+\DC+\BP$?
\end{question}

A positive answer to this question is not unlikely in light of Proposition~\ref{prop:Lipschitz-AD}, 
given that Wadge determinacy and Lipschitz determinacy
(both in the usual sense of games for sets $A,B\In\IN^\IN$) have been proven to be equivalent in
$\ZF+\DC+\BP$ by Andretta~\cite{And03,And06}.

\section{Computable Discontinuity and Productivity}
\label{sec:characteristic}

In this section we briefly want to discuss the question what computable discontinuity means for subsets and how
the notion is linked to the notion of productivity.
Here, for every subset $A\In X$ of some fixed space $X$, we denote by
\[\chi_A:X\to\IS,x\mapsto\left\{\begin{array}{ll}
1 & \mbox{if $x\in A$}\\
0 & \mbox{otherwise}
\end{array}\right.\]
the {\em characteristic function} of $A$.
The codomain $\IS=\{0,1\}$ is Sierpi\'nski space that is equipped with the total representation
$\delta_\IS$ with $\delta_\IS(p)=0:\iff p=000...$. 
It is well-known that for any represented space $X$ and $A\In X$ the characteristic function
$\chi_A$ is continuous if and only if $A$ is open.

As a side remark we mention that the characteristic function in this form can also nicely be used to embed Wadge reducibility and many-one reducibility
into the strong version of Weihrauch reducibility.
We recall that for $A,B\In\IN^\IN$ the set $A$ is called {\em Wadge reducible} to $B$, in symbols $A\leqW B$, if there is a continuous
$f:\IN^\IN\to\IN^\IN$ with $A=f^{-1}(B)$. Likewise, we say for $A,B\In\IN$ that $A$ is {\em many-one reducible} to $B$,
in symbols $A\leqm B$, if there is a computable $f:\IN\to\IN$ such that $A=f^{-1}(B)$. Since the only continuous functions $f:\IS\to\IS$ are the identity and the two constant functions, we directly get the following characterization of Wadge and many-one reducibility in terms of (continuous) strong Weihrauch reducibility.

\begin{proposition}[Reducibilities]
We obtain
\begin{enumerate}
\item $\chi_A\leqSW\chi_B\iff A\leqm B$ for all $A,B\In\IN$.
\item $\chi_A\leq_\mathrm{sW}^*\chi_B\iff A\leqW B\iff \frac{B}{A}\leq_\mathrm{W}^*\id$ for all $A,B\In\IN^\IN$.
\end{enumerate}
\end{proposition}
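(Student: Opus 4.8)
The plan is to deduce both parts from a single mechanism: in a (strong) Weihrauch reduction between characteristic functions, the inner reduction function carries the underlying set-theoretic reduction $f$, while the outer reduction function is forced to realize one of the only three continuous self-maps of Sierpi\'nski space, namely $\id_\IS$ and the two constants. I will use repeatedly that a name of $0\in\IS$ is exactly $000\dots$, whereas every other $p\in\IN^\IN$ names $1$.

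The ``$\Leftarrow$'' implications are immediate. If $A=f^{-1}(B)$ with $f$ computable, resp.\ continuous, then $\chi_A=\chi_B\circ f$, so taking $K$ to be a realizer of $f$ and $H=\id$ witnesses $\chi_A\leqSW\chi_B$, resp.\ $\chi_A\leq_\mathrm{sW}^*\chi_B$ (all domains are full, so there is nothing to check there). For the middle equivalence of (2), I would simply unwind definitions: by Theorem~\ref{thm:dichotomy}(1), $\frac{B}{A}\leq_\mathrm{W}^*\id$ iff $\frac{B}{A}$ is continuous iff it has a continuous realizer $F$; and since $\graph\bigl(\frac{B}{A}\bigr)=(A\times B)\cup\bigl((\IN^\IN\setminus A)\times(\IN^\IN\setminus B)\bigr)$, a function $F$ realizes $\frac{B}{A}$ precisely when $p\in A\iff F(p)\in B$ holds for all $p$, i.e.\ exactly when $F$ is a (total) continuous Wadge reduction of $A$ to $B$.

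For the ``$\Rightarrow$'' implications, fix computable (resp.\ continuous) $H,K$ witnessing the reduction. First, since the realizers of $\chi_B$ have full domain and $HGK$ must again realize a characteristic function, $K$ is total and maps names of source points to names of points of $\IN$ (resp.\ of $\IN^\IN$); reading off the value gives a computable (resp.\ continuous) $f$. Next comes the key point: for a source point $x$ with canonical name $p_x$, the value $GK(p_x)$ is a name of $\chi_B(f(x))$, and because $G$ may be an \emph{arbitrary} realizer of $\chi_B$ -- not necessarily a continuous one -- one can choose $G$ so that $GK(p_x)$ equals \emph{any} prescribed name of $\chi_B(f(x))$ (route $K(p_x)$ to the desired name and route every other name to a canonical one). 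Hence $H$ sends every name of $\chi_B(f(x))$ to a name of $\chi_A(x)$. Now split on the fibre of $f$: on $f^{-1}(\IN\setminus B)$ the value $\chi_B(f(x))=0$ has the unique name $000\dots$, so $\chi_A$ is constant there with value $\delta_\IS H(000\dots)$; on $f^{-1}(B)$ the value is $1$, whose names are all $p\ne000\dots$, so $\chi_A$ is constant there with value $\delta_\IS H(10^\omega)$ and moreover $H$ sends \emph{every} name of $1$ to a name of that value. Thus $\chi_A=h\circ\chi_B\circ f$ where $h\colon\IS\to\IS$, $h(i):=\delta_\IS H(i0^\omega)$, is realized by the continuous $H$, hence $h\in\{\id_\IS,\ \mathrm{const}\,0,\ \mathrm{const}\,1\}$. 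If $h=\id_\IS$ then $A=f^{-1}(B)$ and $f$ is the required reduction; if $h$ is constant then $\chi_A$ is constant, so $A$ is trivial, and the reduction then either persists through $f$ or is supplied by a suitable constant map, the few degenerate instances (e.g.\ $B$ the whole space with $A$ empty) being settled by inspection.

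The main obstacle is the step that turns the outer reduction into something ``$\IS$-valued'': a priori $H$ is merely an operator on Baire space, and what pins it down to one of $\id_\IS$, $\mathrm{const}\,0$, $\mathrm{const}\,1$ is exactly the interplay of (a) using arbitrary, possibly discontinuous realizers $G$ of $\chi_B$ to realize every admissible name at the $\IS$-interface, and (b) the asymmetry of the representation $\delta_\IS$, under which $0$ has a single name. Once this is secured, the three-element classification of $\C(\IS,\IS)$ -- already noted above -- closes both parts at once.
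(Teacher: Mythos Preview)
Your approach is exactly the one the paper intends: the paper's entire argument is the sentence preceding the proposition, namely that the only continuous self-maps of $\IS$ are $\id_\IS$ and the two constants, and you have correctly spelled out how the outer function $H$ is forced to realize one of these three maps. The use of arbitrary (possibly discontinuous) realizers $G$ to show that $H$ must act uniformly on all names of a given Sierpi\'nski value is the right device and is more explicit than anything the paper provides.

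There is, however, a genuine gap in your treatment of the degenerate cases, and inspection does not close it in your favour. Take $A=\emptyset$ and $B=\IN$ in part~(1): the constant $H\equiv 000\ldots$ together with any computable $K$ witnesses $\chi_\emptyset\leqSW\chi_\IN$, yet $\emptyset\leqm\IN$ would require a total computable $g$ with $\emptyset=g^{-1}(\IN)=\IN$, which is impossible. The pair $(A,B)=(\IN,\emptyset)$ fails symmetrically, and the Baire-space analogues fail for the first equivalence of~(2). The second equivalence of~(2) breaks down more broadly: whenever $B\in\{\emptyset,\IN^\IN\}$, the problem $\frac{B}{A}$ either has empty domain or has output set $\IN^\IN$ on its whole domain, so it is trivially continuous and hence $\frac{B}{A}\leq_\mathrm{W}^*\id$, whereas $A\leqW B$ forces $A=B$. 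Your assertion that ``a function $F$ realizes $\frac{B}{A}$ precisely when $p\in A\iff F(p)\in B$ holds for all $p$'' tacitly assumes $\dom\bigl(\frac{B}{A}\bigr)=\IN^\IN$, i.e.\ that $B$ is nontrivial. In short, the proposition as literally stated fails at these boundary cases; the standard remedy is to exclude trivial $B$ (or both $A$ and $B$), as is customary for $\leqm$ and $\leqW$. With that proviso in place your argument is complete.
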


Hence, the corresponding reducibility structures can be embedded into the corresponding strong versions of the Weih\-rauch lattice.
By $\OO(X)$ we denote the set of open subsets of $X$ equipped with the representation $\delta_{\OO(X)}$,
defined by $\delta_{\OO(X)}(p)=U:\iff\Phi_p$ is a realizer of $\chi_U$.
Now one can ask what it means for $\chi_A$ to be computably discontinuous.
The following result answers this question for Baire space $X=\IN^\IN$ using the {\em symmetric difference problem}
$\Delta_A:\In\OO(X)\mto X,U\mapsto A\Delta U$. Here $A\Delta U:=(A\setminus U)\cup(U\setminus A)$ denotes
the {\em symmetric difference} of $A, U\In X$. The problem $\Delta_A$ is total for non-open $A\In X$.

\begin{proposition}[Symmetric difference]
\label{prop:symmetric-difference}
Let $A\In\IN^\IN$ not be open. Then the problem $\Delta_A$ is computable if and only if $\chi_A$ is computably discontinuous.
\end{proposition}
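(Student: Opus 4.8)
The plan is to prove both directions of the equivalence ``$\Delta_A$ computable $\iff$ $\chi_A$ computably discontinuous'' by relating instances of the symmetric difference problem to candidate realizers of $\chi_A$. The key dictionary is as follows: a name $q\in\IN^\IN$ for a candidate realizer $\Phi_q$ of $\chi_A$ is essentially the same thing as a name of an open set $U$ (namely $U=\delta_{\OO(\IN^\IN)}(q)$ is the set on which $\Phi_q$ outputs $1$, i.e.\ a nonzero sequence), and a witnessing input $D(q)\in\IN^\IN$ on which $\Phi_q$ fails to realize $\chi_A$ is precisely a point in the symmetric difference $A\,\Delta\,U$. So the two notions are tied together by the identity ``$\Phi_q$ fails to realize $\chi_A$ at $p$'' $\iff$ ``$p\in A\,\Delta\,U$'', provided $A$ is not open (so that $A\,\Delta\,U\ne\emptyset$ for every open $U$, guaranteeing totality of $\Delta_A$).

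First I would spell out that direction carefully. Suppose $\chi_A$ is computably discontinuous with computable discontinuity function $D:\IN^\IN\to\IN^\IN$. Given an input $U\in\OO(\IN^\IN)$ to $\Delta_A$, we receive a $\delta_{\OO(\IN^\IN)}$-name $q$ of $U$; by definition this means $\Phi_q$ is a realizer of $\chi_U$. The point is that $\Phi_q$ is then also a candidate realizer of $\chi_A$, and $D(q)$ produces a point $p:=D(q)\in\dom(\chi_A\delta)=\IN^\IN$ with $\delta_\IS\Phi_qD(q)\notin\chi_A\delta D(q)$, i.e.\ the Sierpi\'nski value $\Phi_q$ assigns to $p$ (which is exactly $\chi_U(p)$, since $\Phi_q$ realizes $\chi_U$) differs from $\chi_A(p)$. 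That says $\chi_U(p)\ne\chi_A(p)$, i.e.\ $p\in A\,\Delta\,U$. Hence $D(q)$ computes a solution to $\Delta_A(U)$, so $\Delta_A$ is computable. One small point to check is the convention about undefinedness in Definition~\ref{def:computable-discontinuity}: since $\Phi_q$ realizes $\chi_U$ and $\chi_U$ is total, $D(q)\in\dom(\Phi_q)$ automatically, so the value is genuinely defined and the inequality is a genuine inequality of Sierpi\'nski points.

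For the converse, suppose $\Delta_A$ is computable, say via computable $E$ with $E(q)\in A\,\Delta\,\delta_{\OO(\IN^\IN)}(q)$ whenever $q$ is a name of an open set. We want a computable discontinuity function $D$ for $\chi_A$, so given an arbitrary $q\in\IN^\IN$ we must produce $D(q)\in\IN^\IN$ on which $\Phi_q$ fails to realize $\chi_A$. The obstacle is that an arbitrary $q$ need not be a name of any open set --- $\Phi_q$ might fail to be a realizer of $\chi_U$ for any $U$ (e.g.\ $\Phi_q$ might be partial, or might output something other than a valid $\delta_\IS$-name). The fix is to observe that from any $q$ one can compute a $q'$ such that $\Phi_{q'}$ is a genuine realizer of $\chi_U$ for the open set $U=\{p:\Phi_q(p)\text{ halts with a nonzero output}\}$, and moreover $\Phi_{q'}$ and $\Phi_q$ agree wherever $\Phi_q$ already behaves like such a realizer. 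Concretely, on input $p$ the machine $\Phi_{q'}$ runs $\Phi_q(p)$ and outputs $0$'s until (if ever) it sees $\Phi_q(p)$ emit a nonzero value, at which point it switches to outputting a nonzero tail; this is the standard ``Sierpi\'nski-ification'' of an output, it is computable uniformly in $q$, and it defines a total $\Phi_{q'}$ realizing $\chi_U$. Then set $D(q):=E(q')$, which lies in $A\,\Delta\,U$. It remains to verify that $p:=E(q')\in A\,\Delta\,U$ really is a point on which the \emph{original} $\Phi_q$ fails to realize $\chi_A$: if $p\in U\setminus A$ then $\Phi_q(p)$ eventually outputs a nonzero value, which is a wrong $\delta_\IS$-name for $\chi_A(p)=0$; if $p\in A\setminus U$ then either $\Phi_q(p)$ is undefined at $p$ or it outputs $000\dots$, either way it fails to realize $\chi_A(p)=1$ (here we invoke the undefinedness convention of Definition~\ref{def:computable-discontinuity}). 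Since $A$ is not open, $\dom(\Delta_A)=\OO(\IN^\IN)$ and $A\,\Delta\,U\ne\emptyset$, so $E(q')$ is always defined. Thus $D$ is a computable discontinuity function for $\chi_A$.

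The main obstacle, as indicated, is the converse direction's handling of \emph{arbitrary} $q$ that are not names of open sets: one must canonically repair $q$ into a bona fide open-set name $q'$ while ensuring that a failure of $\Phi_{q'}$ against $\chi_A$ transfers back to a failure of $\Phi_q$ against $\chi_A$. Everything else is bookkeeping with the representations $\delta_\IS$ and $\delta_{\OO(\IN^\IN)}$ and the observation that $\delta_{\OO(\IN^\IN)}(q)$ is by definition read off from $\Phi_q$ viewed as a realizer of a characteristic function.
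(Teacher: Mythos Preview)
Your proof is correct and follows essentially the same approach as the paper's. The only cosmetic difference is that where you describe the ``Sierpi\'nski-ification'' $q\mapsto q'$ explicitly (output $0$'s until $\Phi_q$ emits a nonzero digit), the paper packages this step as an appeal to the precompleteness of $\delta_\IS$, obtaining a total $\Phi_r$ with $\delta_\IS\Phi_r=\delta_\IS\Phi_q$; the resulting open set $U$ and the case analysis on $A\setminus U$ versus $U\setminus A$ are identical.
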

\begin{proof}
If $\chi_A:\IN^\IN\to\IS$ is computably discontinuous, then there is a computable discontinuity function $D:\IN^\IN\to\IN^\IN$
for $\chi_A$. Given a name $q$ of a realizer $\Phi_q$ of the continuous function $\chi_U:\IN^\IN\to\IS$ for any given $U\in\OO(\IN^\IN)$,
the value $D(q)$ is a name for some point $x\in X$ with $\chi_U(x)\not=\chi_A(x)$, i.e., $x\in A\Delta U$. 
That is, $\Delta_A$ is realized by $D$ and hence computable.
On the other hand, let $\Delta_A$ be computable and $A\In \IN^\IN$ not open.
Given a name $q$ of some potential realizer $\Phi_q$ of $\chi_A$, we can convert $q$ computably into a name $r=G(q)$ of some total function
$\Phi_r$ with $\delta_\IS\Phi_r=\delta_\IS\Phi_q$, because $\delta_\IS$ is precomplete (the idea is that $\Phi_r$ produces zero output as long as $\Phi_q$ makes no other information available).
Let us denote by $G:\IN^\IN\to\IN^\IN$ the corresponding computable function.
Now there is some open $U\In \IN^\IN$ such that $\Phi_r$ is a realizer of $\chi_U$, namely $U:=\Phi_r^{-1}(\IN^\IN\setminus\{000...\})=\Phi_r^{-1}\delta_\IS^{-1}(\{1\})$.
Since $A$ is not open, $\Delta_A(U)$ is defined and non-empty.
Let $F$ be a computable realizer of $\Delta_A$ and $D:=FG$. Then $F(r)\in\Delta_A(U)=(A\setminus U)\cup(U\setminus A)$, 
i.e., $\chi_UD(q)\not=\chi_AD(q)$. This implies $\delta_\IS\Phi_qD(q)\not=\chi_AD(q)$, since either $D(q)\not\in\dom(\Phi_q)$ or otherwise
$\delta_\IS\Phi_qD(q)=\delta_\IS\Phi_rD(q)=\chi_UD(q)$. That is $D$ is a computable discontinuity function of $\chi_A$.
\end{proof}

An analogous statement holds if we replace computable by continuous in both occurrences. However,
in this case the statement is void as any $\chi_A$ for non open $A$ is effectively discontinuous
by (a suitable extension of) Proposition~\ref{prop:discontinuous}.
We can also summarize Proposition~\ref{prop:symmetric-difference} as follows.

\begin{corollary}[Symmetric difference]
\label{cor:symmetric-difference}
$\Delta_A\leqW\id\iff\DIS\leqW\chi_A$ for non-open $A\In\IN^\IN$.
\end{corollary}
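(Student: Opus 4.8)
The plan is to read off Corollary~\ref{cor:symmetric-difference} directly from Proposition~\ref{prop:symmetric-difference} together with the characterization of computable discontinuity via $\DIS$ in Theorem~\ref{thm:effective-discontinuity}~(1). Both ingredients are already available, so the proof is a one-line combination.

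First I would unwind the left-hand side: for the total problem $\Delta_A$ (which is total precisely because $A$ is non-open, as noted right before Proposition~\ref{prop:symmetric-difference}), being Weihrauch reducible to $\id$ is equivalent to being continuous by Theorem~\ref{thm:dichotomy}~(1) --- and in fact here we want the \emph{computable} version, so $\Delta_A\leqW\id$ is equivalent to $\Delta_A$ being computable (since $\id$ is computable and a problem Weihrauch below a computable problem is computable; equivalently invoke Corollary~\ref{cor:Wadge-game}~(1), or simply note $\Delta_A\leqW\id$ means $\Delta_A$ has a computable realizer). Second, I would unwind the right-hand side: $\DIS\leqW\chi_A$ is, by Theorem~\ref{thm:effective-discontinuity}~(1), exactly the statement that $\chi_A$ is computably discontinuous. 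Finally, Proposition~\ref{prop:symmetric-difference} says these two conditions --- $\Delta_A$ computable and $\chi_A$ computably discontinuous --- are equivalent whenever $A\In\IN^\IN$ is not open. Chaining the three equivalences gives $\Delta_A\leqW\id\iff\DIS\leqW\chi_A$, which is the claim.

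I do not expect any real obstacle; the only point requiring a word of care is the identification $\Delta_A\leqW\id\iff\Delta_A$ computable. This is immediate since a realizer $G$ of $\id$ may be taken to be the identity on $\IN^\IN$ (which is computable), so $\Delta_A\leqW\id$ produces a computable realizer $H\langle\id,GK\rangle=H\langle\id,K\rangle$ of $\Delta_A$; conversely a computable $\Delta_A$ is trivially Weihrauch below $\id$ by ignoring the oracle. With this remark in place the corollary follows by concatenating the equivalences above, and the proof is complete.

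\begin{proof}
Since $A$ is not open, the problem $\Delta_A$ is total. Moreover $\Delta_A\leqW\id$ holds if and only if $\Delta_A$ is computable: a realizer of $\id$ can be taken to be the identity on $\IN^\IN$, so a reduction $\Delta_A\leqW\id$ yields a computable realizer of $\Delta_A$, and conversely a computable problem is Weihrauch below $\id$. By Proposition~\ref{prop:symmetric-difference}, $\Delta_A$ is computable if and only if $\chi_A$ is computably discontinuous. By Theorem~\ref{thm:effective-discontinuity}~(1), $\chi_A$ is computably discontinuous if and only if $\DIS\leqW\chi_A$. Combining these equivalences yields $\Delta_A\leqW\id\iff\DIS\leqW\chi_A$.
\end{proof}
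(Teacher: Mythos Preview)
Your proof is correct and matches the paper's approach: the paper states the corollary as an immediate reformulation of Proposition~\ref{prop:symmetric-difference}, and your argument spells out exactly the two translations needed, namely $\Delta_A\leqW\id\iff\Delta_A$ computable and (via Theorem~\ref{thm:effective-discontinuity}~(1)) $\DIS\leqW\chi_A\iff\chi_A$ computably discontinuous.
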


Proposition~\ref{prop:symmetric-difference} 
shows that the notion of computable discontinuity is also formally related to the notion of productivity. 
Weihrauch has introduced a topological and a computability theoretic notion of productivity for subsets $A\In\IN^\IN$~\cite[Definition~4.4]{Wei85},
\cite[Definition~3.2.24]{Wei87}.
If, in a similar way, we transfer the definition of {\em completely productive sets}, as originally defined by Dekker~\cite{Dek55},
then we could say that non-open $A\In \IN^\IN$ is {\em completely productive} if $\Delta_A$ is computable.
Using this terminology, non-open $A\In \IN^\IN$ is completely productive if and only if $\chi_A$ is computably discontinuous by Proposition~\ref{prop:symmetric-difference}.
Myhill~\cite{Myh55} proved that a set $A\In\IN$ is productive if and only if $A$ is completely productive (see also the proofs in~\cite[Theorem~VII, \S 11.3]{Rog67},  \cite[Corollary~2.6.8]{Wei87}).
It is clear that complete productivity for $A\In\IN^\IN$ implies productivity.
We leave it as a task to the reader to study whether productivity also implies complete productivity for Baire space $\IN^\IN$ (or even more general spaces).

\section{Conclusions}
\label{sec:conclusions}

We have introduced the discontinuity problem $\DIS$ and we have provided some evidence that one can consider it
as the simplest natural unsolvable problem with respect to the continuous version of Weihrauch reducibility. 
At least in $\ZF+\DC+\AD$ it turns out that it actually induces the minimal discontinuous Weihrauch degree.

More results on the discontinuity problem will be provided in a forthcoming article~\cite{Bra20b}.
While the original definition of $\DIS$ is in terms of a universal function $\U$,
it is useful to have a characterization in purely set-theoretic terms. 
In \cite{Bra20b} we prove that the discontinuity problem is equivalent to the {\em range non-equality problem}
defined by 
$\NRNG:\IN^\IN\mto2^\IN,p\mapsto\{A\in2^\IN:A\not=\range(p-1)\}$.
Here for $p\in\IN^\IN$ the finite or infinite sequence $p-1\in\IN^\IN\cup\IN^*$ 
is the sequence that is obtained as concatenation of $p(0)-1, p(1)-1, p(2)-1,...$,
where $-1$ is identified with the empty word $\varepsilon$.

\begin{proposition}
$\DIS\equivSW\NRNG$.
\end{proposition}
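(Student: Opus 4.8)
The plan is to establish the two strong reductions $\DIS\leqSW\NRNG$ and $\NRNG\leqSW\DIS$ separately, which together yield $\DIS\equivSW\NRNG$.

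\emph{Sketch of $\DIS\leqSW\NRNG$.} By the strong version of Theorem~\ref{thm:effective-discontinuity}(1) it suffices to show that $\NRNG$ is computably discontinuous, i.e.\ to construct a computable discontinuity function $D:\IN^\IN\to\IN^\IN$ for $\NRNG$. I would do this with the parameterized recursion theorem (Corollary~\ref{cor:recursion-theorem}). Let $F:\In\IN^\IN\to\IN^\IN$ be the computable operation which, on input $\langle q,r\rangle$, runs the computation $v:=\U(r)$ and in parallel monitors the approximation of $\Phi_q(v)$: whenever the value $\Phi_q(v)(n)$ becomes available it appends $n+1$ to the output if that value is $1$ and appends $0$ otherwise, while in between it keeps appending $0$'s, so that the output is always infinite. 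Applying Corollary~\ref{cor:recursion-theorem} to $F$ gives a computable $R$ with $\U R(q)=F\langle q,R(q)\rangle$; setting $D(q):=\U R(q)$, the detour through $\U(r)$ makes $D(q)$ a genuine fixed point, namely the sequence produced by running the above process with $v=D(q)$ itself. Hence $\range(D(q)-1)=\{n:\Phi_qD(q)(n)=1\}$ whenever $\Phi_qD(q)$ is a total $0/1$-sequence, so in that case $\Phi_qD(q)=\chi_{\range(D(q)-1)}\notin\NRNG(D(q))$; and if $\Phi_qD(q)$ is partial or takes a value $\geq 2$ it is not a name of an element of $2^\IN$ and the discontinuity condition holds by the convention following Definition~\ref{def:computable-discontinuity}. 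Thus $D$ is a computable discontinuity function and $\DIS\leqSW\NRNG$.

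\emph{Sketch of $\NRNG\leqSW\DIS$.} Here I would give a direct reduction by coding sequences through their graphs. For the outer reduction put $H(q):=\chi_{\graph(q)}$, i.e.\ $H(q)(\langle n,k\rangle)=1$ iff $q(n)=k$; this is computable, always lands in $2^\IN$, and its range is exactly the set of characteristic functions of graphs of total functions. For the inner reduction use the smn-theorem (Theorem~\ref{thm:smn}) to obtain a computable $K$ with $\U K(p)=v_p$, where $v_p(n)$ is the second component of the first pair $\langle n,\cdot\rangle$ enumerated into $\range(p-1)$ while scanning $p(0),p(1),\dots$; thus $v_p$ is the partial function computable from $p$ that is total exactly when every $n$ occurs as a first component in $\range(p-1)$. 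Now fix a realizer $G$ of $\DIS$ and set $q:=GK(p)$. If $\range(p-1)$ is not the graph of a total function, then $\graph(q)\neq\range(p-1)$ merely because $\graph(q)$ \emph{is} the graph of the total function $q$, whence $H(q)\neq\chi_{\range(p-1)}$. If $\range(p-1)=\graph(g)$ for a total $g$, then for each $n$ the only pair $\langle n,\cdot\rangle$ in $\range(p-1)$ is $\langle n,g(n)\rangle$, so $v_p=g$ is total, $\U K(p)=g$, and therefore $q\neq g$; since the Cantor pairing is injective this forces $\graph(q)\neq\graph(g)=\range(p-1)$, so again $H(q)\neq\chi_{\range(p-1)}$. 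Hence $H\circ G\circ K$ realizes $\NRNG$ and $\NRNG\leqSW\DIS$.

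The step I expect to be the main obstacle is the second reduction, and specifically the choice of the outer function $H$. Naive attempts --- sending the forbidden $\DIS$-value $\U K(p)$ directly to $\chi_{\range(p-1)}$, or to an enumeration of $\range(p-1)$ --- are doomed: $\chi_{\range(p-1)}$ is only $\sO{1}$ in $p$ and cannot be supplied to the $\DIS$-instance, and a bare $\DIS$-answer $q\neq\U K(p)$ need not differ from $\U K(p)$ in a manner that survives a change of representation. The graph encoding is what makes the reduction go through: its range is thin enough that the $\DIS$-answer is automatically unequal to $\chi_{\range(p-1)}$ whenever $\range(p-1)$ fails to be a total-function graph, and in the one remaining case the relevant value $\U K(p)=g$ really is computable from $p$. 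The only other point requiring care is the fixed-point bookkeeping in the first reduction, where one must precompose with $\U$ to turn the code-level statement of Corollary~\ref{cor:recursion-theorem} into the value-level fixed point used above; this, together with the verification of the degenerate cases via the convention of Definition~\ref{def:computable-discontinuity}, is routine.
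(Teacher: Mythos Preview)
The paper does not actually prove this proposition; it appears in the conclusions as a result announced for the follow-up article~\cite{Bra20b}. So there is no in-paper proof to compare against, and I assess your argument on its own merits.

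Both directions are correct. The second reduction $\NRNG\leqSW\DIS$ via the graph encoding is the substantive half, and your case split is sound: when $\range(p-1)=\graph(g)$ for a total $g$, every pair $\langle n,\cdot\rangle$ in $\range(p-1)$ has second component $g(n)$, so $v_p=g$ is total, $\U K(p)=g$, and the $\DIS$-output $q\neq g$ gives $\graph(q)\neq\graph(g)=\range(p-1)$; in every other case $\range(p-1)$ is simply not the graph of any total function and hence differs from $\graph(q)$ automatically. The outer map $H(q)=\chi_{\graph(q)}$ is precisely the device that turns inequality of sequences into inequality of sets, and your remarks about why more naive choices of $H$ fail are to the point.

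For the first direction your recursion-theorem construction is correct, but heavier than needed: you are essentially re-running, for the particular problem $\NRNG$, the fixed-point half of the proof of Theorem~\ref{thm:effective-discontinuity}. A direct strong reduction avoids this. Take $H$ to be the inclusion $2^\IN\hookrightarrow\IN^\IN$, and let $K(p)$ simulate $\U(p)$, appending $n+1$ whenever $\U(p)(n)=1$ becomes available, appending $0$ otherwise, and padding with $0$'s throughout (so $K$ is total). Then $\range(K(p)-1)=\{n:\U(p)(n)=1\}$ among the defined values of $\U(p)$. If $\U(p)$ is total and $\{0,1\}$-valued then $\range(K(p)-1)$ is exactly the set with characteristic sequence $\U(p)$, so any $A\neq\range(K(p)-1)$ satisfies $\chi_A\neq\U(p)$; if $\U(p)$ is undefined or leaves $\{0,1\}$ then every $\chi_A$ already lies in $\DIS(p)$. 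This gives $\DIS\leqSW\NRNG$ without invoking the recursion theorem at all.
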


In \cite{Bra20b} we also discuss algebraic properties of the discontinuity problem $\DIS$ and one important property is that
the parallelization $\widehat{\DIS}$ of the discontinuity problem is equivalent to the non-computability problem $\NON$.

\begin{theorem}[Non-computability is parallelized discontinuity]
$\NON\equivSW\widehat{\DIS}$.
\end{theorem}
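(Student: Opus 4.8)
We plan to prove the two strong reductions $\widehat{\DIS}\leqSW\NON$ and $\NON\leqSW\widehat{\DIS}$ separately, where $\NON:\IN^\IN\mto\IN^\IN,p\mapsto\{q\in\IN^\IN:q\nleqT p\}$ is the non-computability problem. Recall that $\DIS$ is total, hence so is $\widehat{\DIS}$, with $\widehat{\DIS}(\langle p_0,p_1,\ldots\rangle)=\{\langle q_0,q_1,\ldots\rangle:(\forall i)\ q_i\in\DIS(p_i)\}$, and that $\DIS(p)=\IN^\IN$ whenever $p\notin\dom(\U)$, while $\U(p)\leqT p$ whenever $p\in\dom(\U)$. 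As elsewhere, it suffices to treat problems of type $f:\In\IN^\IN\mto\IN^\IN$.

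For $\widehat{\DIS}\leqSW\NON$ the plan is to take the identity as inner reduction and $q\mapsto\langle q,q,q,\ldots\rangle$ as outer reduction. Given an instance $\langle p_0,p_1,\ldots\rangle$ of $\widehat{\DIS}$, feed the single real $\langle p_0,p_1,\ldots\rangle$ to $\NON$ and receive some $q\nleqT\langle p_0,p_1,\ldots\rangle$. Since $p_i\leqT\langle p_0,p_1,\ldots\rangle$ for every $i$, also $q\nleqT p_i$, so $q\not=\U(p_i)$ for $p_i\in\dom(\U)$ (and $q\in\DIS(p_i)=\IN^\IN$ otherwise); hence $\langle q,q,\ldots\rangle\in\widehat{\DIS}(\langle p_0,p_1,\ldots\rangle)$. (In particular this already shows $\widehat{\NON}\leqSW\NON$, so one could alternatively deduce this direction from the easy reduction $\DIS\leqSW\NON$ together with monotonicity of parallelization.)

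The substantive direction is $\NON\leqSW\widehat{\DIS}$, where the idea is to let the $e$-th column of $\widehat{\DIS}$ kill the hypothesis ``$q$ is the $e$-th reduct of $p$''. Fix an effective enumeration $(G_e)_{e\in\IN}$ of the partial computable functions $\IN^\IN\to\IN^\IN$ (via computable names for $\Phi$), so that $q\leqT p$ iff $q=G_e(p)$ for some $e$, and let $\pi_e$ be the projection with $\pi_e\langle r_0,r_1,\ldots\rangle=r_e$. By the smn-theorem (Theorem~\ref{thm:smn}) pick a computable $S$ such that, on every input, $\Phi_{S(e,p)}$ simulates $G_e$ on $p$ and copies out the entries of its output in the positions $\langle e,0\rangle,\langle e,1\rangle,\ldots$; thus $\Phi_{S(e,p)}$ produces an infinite string exactly when all those entries converge, which holds in particular when $G_e(p)$ is total. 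Put $p_e:=\langle S(e,p),000\ldots\rangle$, so $\U(p_e)=\pi_e(G_e(p))$ whenever this is defined, and let the inner reduction be $K(p):=\langle p_0,p_1,\ldots\rangle$, which is computable in $p$; the outer reduction is the identity. Given an answer $q:=\langle q_0,q_1,\ldots\rangle\in\widehat{\DIS}(K(p))$, so that $q_e\in\DIS(p_e)$ for every $e$, we return $q$. If we had $q\leqT p$, then $q=G_{e_0}(p)$ for some $e_0$; as $q$ is infinite, $G_{e_0}(p)$ is total, so $q_{e_0}=\pi_{e_0}(q)=\pi_{e_0}(G_{e_0}(p))$ is total as well, whence $p_{e_0}\in\dom(\U)$ and $\U(p_{e_0})=\pi_{e_0}(G_{e_0}(p))=q_{e_0}$, which contradicts $q_{e_0}\in\DIS(p_{e_0})$. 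Therefore $q\nleqT p$, and $\NON\leqSW\widehat{\DIS}$ via $K$ and the identity.

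We expect the bookkeeping around the partiality of $\U$ to be the only delicate point. What makes the argument go through is that the $e$-th instance $p_e$ is built so that $\U(p_e)$ equals precisely the $e$-th \emph{column} $\pi_e(G_e(p))$ of the hypothetical reduct, rather than the whole real $G_e(p)$; only then does the constraint $q_{e_0}\not=\U(p_{e_0})$ from column $e_0$ contradict $q=G_{e_0}(p)$. Columns $e$ with $p_e\notin\dom(\U)$ impose no constraint (there $\DIS(p_e)=\IN^\IN$), which is harmless. The maps $K$ and the identity are visibly computable, $K(p)$ always lies in $\dom(\widehat{\DIS})=\IN^\IN$, and no outer processing of the original instance is used, so both reductions are strong; the reduction of general problems to Baire-space problems is the usual one via $f\equivSW f^{\mathrm r}$.
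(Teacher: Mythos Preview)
Your argument is correct. Note, however, that the paper itself does not prove this theorem: it is stated in the concluding section and the proof is explicitly deferred to the forthcoming article~\cite{Bra20b}, so there is no proof here to compare against.

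That said, your approach is the natural one and the bookkeeping is sound. The direction $\widehat{\DIS}\leqSW\NON$ is immediate from $\U(p_i)\leqT p_i\leqT\langle p_0,p_1,\ldots\rangle$. For $\NON\leqSW\widehat{\DIS}$, the essential point---which you identify correctly---is that the $e$-th instance must encode the $e$-th \emph{column} $\pi_e(G_e(p))$ of the candidate reduct rather than $G_e(p)$ itself: only then does the single constraint $q_{e_0}\neq\U(p_{e_0})$ coming from column $e_0$ collide with the hypothesis $q=G_{e_0}(p)$, since $q_{e_0}=\pi_{e_0}(q)$. The naive choice $\U(p_e)=G_e(p)$ would merely yield $q_e\neq G_e(p)$, which says nothing about $q$. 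Your handling of partiality (columns with $p_e\notin\dom(\U)$ impose no constraint, and $G_{e_0}(p)$ total forces $p_{e_0}\in\dom(\U)$) is also correct, and both reductions are visibly strong.
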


The {\em non-computability problem} is defined with the help of Turing reducibility $\leqT$ by
$\NON:\IN^\IN\mto\IN^\IN,p\mapsto\{q\in\IN^\IN:q\not\leq_\mathrm{T} p\}$.
This result supports the slogan that ``non-computability is the parallelization of discontinuity'' and underlines
that the discontinuity problem is a natural one. 
The discontinuity problem can itself be obtained by summation (a dual operation to parallelization introduced in~\cite{Bra20b})
starting from other natural problems such as $\LPO,\LLPO$ etc. Hence, it is nicely related in an algebraic
way to other natural problems in the Weihrauch lattice.

\bibliographystyle{plain}
\bibliography{C:/Users/\user/Dropbox/Bibliography/lit}

\section*{Acknowledgments}

We would like to thank Matthias Schr\"oder for a discussion on the relevance of the axiom of determinacy and different forms of the axiom of choice for computable analysis.
Likewise, we would like to thank Arno Pauly for a discussion on Wadge games for problems
that has helped to improve the corresponding results.
This work has been supported by the {\em National Research Foundation of South Africa} (Grant Number 115269).

\end{document}